\newtheorem{theorem}{Theorem}
\newtheorem{algorithm}{Algorithm}
\newtheorem{definition}{Definition}
\newtheorem{example}{Example}
\newtheorem{lemma}{Lemma}
\newtheorem{proposition}{Proposition}
\newtheorem{remark}{Remark}
\newenvironment{proof}[1][Proof]{\noindent\textbf{#1.} }{\ \rule{0.5em}{0.5em}}
\def\definedas{\stackrel{\Delta}{=}}
\begin{document}

%
%
%
%
%
%
%
%

%

\title{Rare-event Simulation and Efficient Discretization for the Supremum of Gaussian Random Fields}

\author{Xiaoou Li and Jingchen Liu\\\\ Columbia University}

\maketitle

\begin{abstract}
	In this paper, we consider a classic problem concerning the high excursion probabilities  of a Gaussian random field $f$ living on a compact set $T$.
	We develop efficient  computational methods for the tail probabilities $P(\sup_T f(t) > b)$ and the conditional expectations $E(\Gamma(f) | \sup _T f(t) > b)$ as $b\rightarrow \infty$.
	For each $\varepsilon$ positive, we present Monte Carlo algorithms that run in \emph{constant} time and compute the interesting quantities with $\varepsilon$ relative error  for arbitrarily large $b$.
	The efficiency results are applicable to a large class of H\"older continuous Gaussian random fields.
	Besides computations, the proposed change of measure and its analysis techniques have several theoretical and practical indications in the asymptotic analysis of extremes of Gaussian random fields.
\end{abstract}

\section{Introduction}\label{SecIntro}

	In this paper, we consider the design and the analysis of efficient Monte Carlo methods for the high excursion events of Gaussian random fields.
	Consider a probability space $(\Omega, \mathcal F, P)$ and  a Gaussian random field
	$$f:T\times \Omega \rightarrow R$$
	living on a $d$-dimensional compact subset $T\subset R^d$.
	Most of the time, we omit the second argument and write $f(t)$.
	Let $M=\sup_{t\in T} f(t)$. In this paper, we are interested in the efficient computation of the high excursion probabilities of $f(t)$, that is,
\begin{equation}\label{TailProb}
w(b) \triangleq P(M > b)
\end{equation}
and the corresponding conditional expectations
\begin{equation}\label{CondExp}
v(b) \triangleq E(\Gamma(f) | M > b)
\end{equation}
in the asymptotic regime that $b$ tends to infinity, where $\Gamma(\cdot)$ is a functional (possibly a random functional)  mapping  from the space of continuous functions to the real line.

	The proposed algorithms are based on importance sampling that is associated with an appropriately designed change of measure mimicking the conditional distribution  $P(f\in \cdot ~| M > b)$.
%
	Much of this paper will focus on the design and the implementation of the algorithm for the tail probability $w(b)$.
	For the conditional expectation, we present an efficient algorithm and its analysis for one specific example: the integral on the excursion set with respect to positive processes.
	It turns out that the computations of $w(b)$ and $v(b)$ are closely related, which will be discussed in details in Section \ref{SecIS}.
	Most of the time, we are interested in computing small quantities converging to zero. Thus, it is sensible to consider relative accuracy that is defined as follows.
\begin{definition}
For some positive $\varepsilon$ and $\delta$, a Monte Carlo estimator $Z$ of $w$ is said to admit  $\varepsilon-\delta$ relative accuracy if
\begin{equation}\label{accuracy}
P(|Z - w| < \varepsilon w) > 1- \delta.
\end{equation}
\end{definition}

	We propose  Monte Carlo estimators admitting $\varepsilon-\delta$ relative accuracy   for computing the tail probabilities $w(b)$ and the conditional expectations $v(b)$.
	One notable feature of this estimator is that  \emph{the total computational complexity to generate one such estimator is bounded by a constant $C(\varepsilon,\delta)$ that is independent of the excursion level $b$.}
	Thus, to compute $w(b)$ and $v(b)$  with any prescribed relative accuracy as in \eqref{accuracy}, the total computational complexity remains bounded as the event becomes arbitrarily rare.
    With such an algorithm, the computation of rare event is at the same level of complexity as the computation of regular expectations.
	In addition, this efficiency result is applicable to a large class of H\"older continuous Gaussian random fields and thus is very generally applicable.

	The analysis mainly consists of two components. First, we propose a change of measure on the continuous sample path space (denoted by $Q_b$). The corresponding importance sampling estimators  are unbiased.
	The first step of the analysis is to show that the  estimators admit standard deviations on the order of $O(w(b))$ or $O(v(b))$. Such
estimators are said to be \emph{strongly efficient} that is a common efficiency concept in the rare-event simulation literature (\cite{BUC04,ASMGLY07}).
	
	The second part of the analysis concerns  the implementations. The simulation of the estimators in the previous paragraph requires the generation of the entire sample path of $f$. Under the current context, the process $f$ is a continuous function. Computer can only perform discrete simulations. Therefore, we need to seek for an appropriate discretization scheme to perform the simulations. For instance, a natural approach is to  choose a subset
	\begin{equation}\label{Tn}
	T_m=(t_1,...,t_m) \subset T
	\end{equation}
		and use the discrete field living on $T_m$ to approximate the continuous field. Thanks to continuity and under certain regularity conditions of $T_m$, one can show that $P(\sup_{T_m}f(t) > b)/w(b)\rightarrow 1$ as $m\rightarrow \infty$, i.e., the bias vanishes as the size of the discretization increases. 	
    However, it is well understood that this convergence is not uniform in $b$. The smaller $w(b)$ is, the slower it converges, indicating that the set $T_m$ needs to grow in order to maintain a prefixed relative bias.
	In fact, as discussed in \cite{adler2012efficient}, for any deterministic subset $T_m$, the size $m$ \emph{must increase} at least polynomially with $b$ to ensure a given relative accuracy. In this paper,  the discretization scheme is  random and adapted to (correlated with) the random field $f$. This adaptive scheme substantially reduces the computation complexity, in particular, to a constant level.

	The high level excursion  of  Gaussian random fields is a classic topic in probability.
	There is a wealth of literature that contains general bounds on $P(\sup f(t)>b)$ as well as sharp asymptotic approximations as $ b\rightarrow \infty$. A partial literature contains \cite{LS70,MS70,ST74,Bor75,Bor03,LT91,TA96,Berman85}.
Several methods have been introduced to obtain bounds and asymptotic
approximations, each of which imposes different regularity conditions on the
random fields. General upper bound for the tail of $\max f(t)$ is developed in \cite{Bor75,CIS}, which is known as the Borel--TIS lemma. For asymptotic results,
there are several methods. The double sum method (\cite{Pit95}) requires an
expansion of the covariance function around its global maximum and
also locally stationary structure. The Euler--Poincar\'{e} Characteristics
of the excursion set approximation (denoted by $\chi(A_{b})$, where $A_{b}$
is the excursion set) uses the fact $P(M>b)\approx E(\chi(A_{b}))$ and
requires the random field to be at least twice differentiable (\cite%
{Adl81,TTA05,AdlTay07,TayAdl03}). The tube method (\cite{Sun93}) uses the
Karhunen-Lo\`{e}ve expansion and imposes differentiability assumptions on
the covariance function (fast decaying eigenvalues) and regularity
conditions on the random field. The Rice method (\cite{AW08,AW09})
represents the distribution of $M$ (density function) in an implicit form.
For other convex functionals, the exact tail approximation of integrals of exponential functions of Gaussian random fields is developed by \cite{Liu10,LiuXu11}.  Recently, \cite{AST09} studied the geometric properties of high level excursion set for infinitely divisible non-Gaussian fields as well as the conditional distributions of such properties given the high excursion.
Numerical methods are recently discussed by  \cite{adler2012efficient}  who proposes  importance sampling estimators of $w(b)$. In particular, the authors show that the proposed estimator is a fully polynomial randomized approximation scheme (FPRAS), that is, to achieve the $\varepsilon-\delta$ relative accuracy, the total computation complexity is of order $O(\varepsilon^{-q_1}\delta^{-q_2}|\log w(b)|^q)$ (\cite{TraubWW88,Wos96,MitzUpf05}).
When $w(b)$ is very small, the complexity $O(|\log w(b)|^q)$ could be computationally heavy.

	This paper is a nontrivial and substantial generalization of \cite{adler2012efficient}. In particular, the contributions are as follows.
    First, we introduce an adaptive discretization scheme that reduces the overall computational cost to a constant level. This is a substantial improvement of \cite{adler2012efficient}  who requires the discretization size grow polynomially in $b$ for both  differentiable and non-differentiable fields.
	Second, we show that the continuous importance sampling estimator is strongly efficient to compute $w(b)$ for both H\"older continuous fields and differentiable cases (by imposing mild regularity conditions). This generalizes the results in \cite{adler2012efficient} who establishes that their relative error grows polynomially fast with $b$ unless the process is twice differentiable for which the exact Slepian model is available.
	Third, we present an algorithm with constant complexity for the computation of the conditional expectations of integrals on the excursion sets.
	Lastly, from the technical and methodological point of view, the development of this paper mostly deals with change of measures defined on the continuous sample path space. In contrast, the analysis of \cite{adler2012efficient} relies heavily on the discrete nature of the estimators (multivariate Gaussian random vectors). The methodological contribution of this paper is developing techniques to deal with change of measures defined on the continuous sample path space.
	As we shall see in the technical development, with moderate adaptations, the analysis techniques can be applied to the analysis of a large class of conditional expectations $E[\Gamma (f)|M>b]$. In particular, in Theorem \ref{ThmInt}, we employ the change of measure to derive the asymptotic approximations of the expected conditional integrals.

	As the total complexity of the Monte Carlo estimator is constant, the computational cost is comparable to that of the closed form approximation of $w(b)$.
One advantage of our method is that it can yield arbitrarily small relative error at the expense of more computational costs; while the error of closed form approximations are prefixed for each $b$ and they are usually not straightforward to obtain -- requiring second order approximations.
In addition to $w(b)$, the current Monte Carlo methods can also be used to compute the conditional expectations whose asymptotic analyses are case-by-case.
	Another advantage of the  Monte Carlo estimators is that their implementations do not require the computation of various constants appearing in the closed form approximations (such as Pickands constant, Lipschitz-Killing Curvature, etc), neither do they require the fine knowledge of the local expansions. In addition to the tail probabilities and the conditional expectations, the proposed estimator also provides means to compute the Pickcands constants. This application will be discussed in Remark \ref{rempick}.

The rest of this paper is organized as follows. In Section \ref{SecPre}, we present the problem settings and some existing results that we will refer to in the later analysis.
Section \ref{SecMain} presents the Monte Carlo methods and their efficiency results.
Numerical implementations are included in Section \ref{SecNum}.
Sections \ref{SecCont}, \ref{SecDis}, and \ref{SecInt} include the proofs of the  theorems.

\section{Preliminaries: Gaussian random fields and rare-event simulation}\label{SecPre}


\subsection{Gaussian random fields}

Throughout this paper, we consider a Gaussian random field living on a $d$-dimensional compact subset $T\subset R^d$, that is, for any finite subset $(t_1,...,t_n)\subset T$, $(f(t_1),...,f(t_n))$ is a multivariate Gaussian random vector. For each $s,t\in T$, we define the following functions,
\begin{eqnarray*}
&&\mu(t) = E(f(t)),\qquad C(s,t) = Cov( f(s),f(t) ), \qquad\mu_T = \sup_{t\in T} |\mu(t)|,\\
&&\sigma^2(t) = C(t,t), \qquad \sigma^2_T = \sup_{t\in T} \sigma^2(t), \qquad r(s,t)=\frac{C(s,t)}{\sigma(s)\sigma(t)}.
\end{eqnarray*}
In this paper, we are mostly interested in the high excursion probability
\begin{equation*}
w(b) = P(M > b).
\end{equation*}
In addition to the tail probabilities, we also present the analysis concerning the integrals on the  excursion set.
Let  $A_\gamma$ be the excursion set over the level $\gamma$
	\begin{equation}\label{ex}
	A_\gamma = \{t\in T: f(t) > \gamma\}.
	\end{equation}
We define the integral
\begin{equation}\label{alpha}
\alpha(b) = \int_{A_b} \xi(t)dt.
\end{equation}
where  $\xi(t)$ is another random field living on $T$. Then we are interested in computing the conditional expectation
\begin{equation*}
  v(b)=E(\alpha(b)|M>b)
\end{equation*}
We now state the technical conditions that require the following definition.
\begin{definition}
A function $L$ is said to be slowly varying at zero if
$$\lim_{x\to 0} \frac{L(tx)}{L(x)} = 1, \qquad \mbox{for all $t\in (0,1)$.}$$
\end{definition}
Throughout this paper, we impose the following technical conditions.
\begin{enumerate}

 \item[A1] The process $f(t)$ is almost surely continuous in $t$.

\item [A2]For some $\alpha_1 \in (0,2]$, the correlation function satisfies the following local expansion
\begin{equation}\label{corexp}
1 - r(s,t) \sim  \Delta_s L_1(|t-s|) |t-s|^{\alpha_1},\quad \mbox{as $t\to s$}
\end{equation}
where $\Delta_s\in (0,\infty)$ is continuous in $s$ and $L_1$ is a slowly varying function at zero.
Furthermore, there exist nonnegative constants $\kappa_r, \beta_0$, and positive constant $\beta_1>0$ satisfying $\beta_0+\beta_1\geq\alpha_1$ such that
\begin{equation}\label{corbd}
  |r(t,t+s_1)-r(t,t+s_2)|\leq \kappa_rL_1(|s_1|)|s_1|^{\beta_0}|s_1-s_2|^{\beta_1} \qquad \mbox{for $|s_1|\leq |s_2|$.}
\end{equation}

\item[A3] The correlation function is non-degenerate, that is,  $r(s,t)<1$ for all $s\neq t$.

\item[A4] The standard deviation $\sigma(t)$ belongs to either of the following two types.
\begin{enumerate}
  \item[Type 1] $\sigma(t)=1$ for all $t\in T$.
  \item[Type 2]  $\sigma(t)$  has a unique maximum attained at $t^*$ satisfies the following conditions
    \begin{eqnarray}\label{sdbd}
      |\sigma(t)-\sigma(s)|&\leq& \kappa_\sigma \times L_2(|t-s|)\times |t-s|^{\alpha_2}\qquad  \mbox{for all $s,t\in T$},\\
      \sigma(t^*) - \sigma(t)&\sim& \Lambda \times L_2(|t-t^*|)\times |t-t|^{\alpha_2}\qquad \mbox{as $t\to t^*$}, \label{sdexp}
    \end{eqnarray}
where $\alpha_2\in (0,1]$, $\Lambda>0$, and $L_2$ is a slowly varying function at zero such that the limit $\lim_{x\to 0+}\frac{L_1(x)}{L_2(x)}$ exists.
\end{enumerate}
\item[A5] There exists $\kappa_\mu>0$ such that if $\sigma(t)$ is of Type 1 then
$|\mu(s)-\mu(s+t)|\leq\kappa_\mu \sqrt{L_1(|t|)} |t|^{\alpha_1/2}$; if $\sigma(t)$ is of Type 2 then $|\mu(s)-\mu(s+t)|\leq\kappa_\mu \sqrt{L_2(|t|)} |t|^{\alpha_2/2}$.
\item[A6]There exist $ \kappa_m$  and $\epsilon$ small enough, such that  $mes(B(t,\epsilon)\cap T)\geq \kappa_m\epsilon^d\omega_d$, for any $ t\in T$,  where $B(t,\epsilon)$ is the $\epsilon$-ball centered around $t$ and $\omega_d$ is the volume of the $d-$dimensional unit ball.

\end{enumerate}

Condition A2 ensures that the normalized process $\frac{f(t) - \mu(t)}{\sigma(t)}$ is H\"older continuous with coefficient $\alpha_1/2$.
	The bound in \eqref{corbd} imposes slightly more conditions. For instance, in case when $1- r(s,t) = |t-s|^{\alpha_1}$, we can choose that $\beta_0 =\alpha_1-1$ and $\beta_1=1$ if $\alpha_1 \geq 1$; $\beta_0 =0$ and $\beta_1 = \alpha_1$  if $0<\alpha_1 <1$.
	Condition A3 excludes the degenerated cases that are not essential and it makes the technical development more concise. Conditions A4 and A5 require that the mean and the standard deviation functions are also H\"older continuous.
In Condition A4, we can adjust the constant  $\Lambda$  such that the limit $\lim_{x\to 0+} {L_1(x)}/{L_2(x)}$ belongs to the set $\{0,1,\infty\}$.
Condition A5 ensures that the variation of the mean function is bounded by those of $f(t)$ and $\sigma(t)$.
In the later technical developments, the analysis is divided into two cases $\alpha_1 < \alpha_2$ and $\alpha_1\geq\alpha_2$.

Throughout this paper, we use the following notations for the asymptotics. We write $h(b) = o(g(b))$ if $h(b)/g(b) \to 0$ as $b\to \infty$; $h(b) = O(g(b))$ if $h(b) \leq \kappa g(b)$ for some $\kappa>0$; $h(b)=\Theta(g(b))$ if $h(b) =O(g(b))$ and $g(b) = O(h(b))$; $h(b)\sim g(b)$ if $h(b) / g(b) \to 1$ as $b\to \infty$.

\subsection{Rare-event simulation and importance sampling}\label{SecIS}
	\subsubsection{Rare-event simulation.}
	The research focus of rare-event simulation is on estimating $w=P(B)$, where $P(B)\approx0$. It is customary to introduce a parameter, say $b>0$, with a meaningful interpretation from an applied standpoint such that  $w(b)\rightarrow0$ as $b\rightarrow \infty$.
	Consider an estimator $Z_{b}$ such that $EZ_{b}=w(b)$.
	A popular efficiency concept in the rare-event simulation literature is the so-called strong efficiency that is defined as follows (c.f.~\cite{ASMGLY07,BUC04,JunSha06}).

\begin{definition}
A Monte Carlo estimator $Z_b$ is said to be \emph{strongly efficient} in estimating $w(b)$ if $E (Z_b) = w(b)$ and there exists a $\kappa_0 \in (0,\infty)$ such that
$$\sup_{b>0}\frac{Var(Z_{b})}{w^{2}(b)}<\kappa_0.$$
\end{definition}
	Strong efficiency measures mean squared error in relative terms for an unbiased estimator.
	Suppose that a strongly efficient estimator of $w(b)$ has been constructed, denoted by $Z_b$,  and $n$ i.i.d.~replicates of $Z_b$ are generated $Z_b^{(1)},...,Z_b^{(n)}$. Let
	\begin{equation*}
	\bar Z_{b,n} \triangleq \frac 1 n \sum_{i=1}^n Z_b^{(i)}
	\end{equation*}
	be the averaged estimator that has an variance
	$$Var (\bar Z_{b,n}) = \frac{Var(Z_b)}{n}.$$
	By means of the Chebyshev's inequality, we obtain that
	$$P\left(|\bar Z_{b,n} - w(b)| > \varepsilon w(b)\right) \leq \frac{Var (Z_b)}{n\varepsilon^2 w^2(b)}.$$
	For any $\delta > 0$, to achieve the $\varepsilon -\delta$ accuracy, we need to generate $$n = \frac{Var (Z_b)}{\delta\varepsilon^2 w^2(b)}\leq \frac{\kappa_0}{\delta\varepsilon^2 }$$
	replicates of $Z_b$. This choice of $n$ is uniform in the rarity parameter $b$. We will later show that the proposed continuous importance sampling estimator is strongly efficient.
	Besides strong efficiency, another weaker concept is the so-called weak/asymptotic efficiency, that is,
	 $$\lim_{b\rightarrow \infty}\frac{\log Var(Z_{b})}{2\log w(b)} =1.$$
    Weak efficiency implies that $Var(Z_b)=o(w(b)^\varepsilon)$ for any $\varepsilon>0$.

	\subsubsection{Importance sampling and variance reduction.}

	Importance sampling is based on the basic identity,
\begin{equation}
P(  B)  =\int I\left(  \omega\in B\right)  dP\left(  \omega\right)
=\int I\left(  \omega\in B\right)  \frac{dP}{dQ}\left(  \omega\right)
dQ\left(  \omega\right)  \quad \mbox{for a measurable set $B$,} \label{Id1}%
\end{equation}
where we assume that the probability measure $Q $ is such
that $Q(  \cdot\cap B)  $ is absolutely continuous with respect to
the measure $P\left(  \cdot\cap B\right)  $. If we use $E^{Q}  $ to denote
 expectation under  $Q $, then (\ref{Id1}) trivially yields
 that the random variable
\[
Z\left(  \omega\right)  =I\left(  \omega\in B\right)  \frac{dP}{dQ}\left(
\omega\right)
\]
is an unbiased estimator of $P\left(  B\right)  >0$ under the measure
$Q$, or symbolically, $E^{Q}Z=P\left(  B\right)  $.
%

A central component lies in the selection of $Q$ in order to minimize the variance of $Z$. It is easy to verify that if we choose $\mathcal{Q}^{\ast}(\cdot)=P(\cdot|B)=P(\cdot\cap B)/P(B)$ then the
corresponding estimator has zero variance and thus it is usually referred to as the the \emph{zero-variance change of measure}.
However, $\mathcal{Q}^{\ast}$ is clearly a change of measure that is  of no practical value, since
$P\left(  B\right)  $ -- the quantity that we are attempting to evaluate in
the first place -- is unknown. Nevertheless, when constructing a good
importance sampling distribution for a family of sets $\{B_{b}: b\geq b_{0}\}$
for which $0<P\left(  B_{b}\right)  \rightarrow0$ as $b\rightarrow\infty$, it
is often useful to analyze the asymptotic behavior of $\mathcal{Q}^{\ast}$ as $P\left(  B_{b}\right)  \rightarrow0$ in order to
guide the construction of a useful $Q$.

We now describe briefly how an efficient importance sampling estimator for
$P\left(  B_{b}\right)  $ can also be used to estimate a large class of
conditional expectations given $B_{b}$. Suppose that an importance sampling estimator has been constructed
\begin{equation*}
Z_{b}\ \definedas \ I(  \omega\in B_{b})  \frac{dP}{dQ},
\end{equation*}
such that $Var\left(  L_{b}\right)  =O\left(P(  B_{b})^{2}\right)$.
 Then, by noting that%
\begin{equation}
\frac{E^{Q}\left(  XZ_{b}\right)  }{E^{Q}\left(  Z_{b}\right)  }=\frac{E[X;B_{b}]}{P(B_{b})  }=E[\left.
X\right\vert B_{b}], \label{eq2}%
\end{equation}
it follows easily that an estimator can be naturally obtained; i.e.~the ratio of the
corresponding averaged importance sampling estimators suggested by the ratio
in the left of (\ref{eq2}). Of course, when $X$ is difficult to simulate
exactly, one must assume that the bias in estimating $E[X;B_{b}]$ can be reduced with certain computational costs.

	\subsubsection{The bias control.}
	In addition to the variance control, one also needs to account for the computational effort required to generate $Z_{b}$. This issue is especially important for the current study.
	The random objects in this analysis are continuous processes. For the implementation, we need to use a discrete object to approximate the continuous process. Inevitably, the discretization induces bias, though it vanishes as the size of the discretization increases.
	To ensure the $\varepsilon -\delta$ relative  accuracy, the bias  needs to be controlled to a level less than $\varepsilon w(b)$.
	
	The discretized estimators in \cite{adler2012efficient} can be shown to be weakly efficient for general uniformly H\"older continuous Gaussian processes and it is strongly efficient when the process is  twice differentiable and homogeneous.
	The analysis of the H\"older continuous fields  relies heavily on the discrete nature of the estimators.
	For the implementation, it is established that, to ensure a bias of order $\varepsilon w(b)$, the size of the discretization must  grow at a polynomial rate of $b$ for both differentiable and non-differentiable fields. The authors also provide an optimality result.
	For twice differentiable and homogeneous fields, the size of a prefixed/deterministic set $T_m$ must be at least of order $O(b^d)$ so that the bias can be controlled to level $\varepsilon w(b)$.

\section{Main results} \label{SecMain}

The main results of this paper consist of the construction of a change of measure on the continuous sample path space on $T$, a random discretization scheme of $T$ associated with the change of measure $Q_b$, and lastly the efficiency results including the strong efficiency of the continuous and the discrete estimators and the complexity analysis of the discretization scheme.

\subsection {The change of measure}
	As discussed previously, a key element of the analysis is the construction of a change of measure $Q_b$ (indexed by the rarity parameter $b$) that approximates the conditional measure $P(f\in\cdot ~ | M > b)$. We should be able to compute the Radon-Nikodym derivative and also be able to simulate the process $f$ under $Q_b$.
	We describe the measure $Q_b$ from two aspects. First, we present  its Radon-Nikokym derivative with respect to $P$
	\begin{equation}\label{LR}
	\frac{dQ_b}{dP} (f) = \int_T h_b(t) \frac{q_{b,t}(f(t))}{\varphi_t(f(t))}dt,
	\end{equation}
	where $h_b(t)$ is a density function on  the set $T$, $q_{b,t}(x)$ is a density function on the real line, and $\varphi_t(x)$ is the density function of $f(t)$ under the measure $P$ evaluated at $f(t)=x$. We will need to choose $h_b(t)$ and $q_{b,t}(x)$ such that the measure $Q_b$ satisfies the absolute continuity condition to guarantee the unbiasedness.

	We will present the specific forms of $h_b(t)$ and $q_{b,t}(x)$ momentarily. Before that, we would like to complete the description of $Q_b$ by presenting the simulation method of $f$ under $Q_b$. 	
	
	\begin{algorithm}[Continuous simulation]\label{AlgCont}
	To generate a random sample path under the measure $Q_b$, we  need a three-step procedure.
	\begin{itemize}
	\item[Step 1.] Generate a random index $\tau\in T$ following the density $h_b(t)$.
	\item[Step 2.] Conditional on the realization of $\tau$, sample $f(\tau)$ from the density $q_{b,\tau}(x)$.
	\item[Step 3.] Conditional on the realization of $(\tau, f(\tau))$, generate $\{f(t): t\neq \tau\}$ from the original conditional distribution $P(f\in \cdot ~| f(\tau))$.
	\end{itemize}
	\end{algorithm}
	It is not difficult to verify that the above three-step procedure is consistent with the Randon-Nikodym derivative given as in \eqref{LR}. In particular, a random index $\tau$ is first sampled according to the density $h_b(t)$. Second, the random field at the location $\tau$, $f(\tau)$, is sampled from the distribution $q_{b,\tau}(x)$. Lastly, the rest of the  field is sampled according the nominal/original condition distribution. The process $f(t)$  mostly follows the distribution under $P$ except at one random location $\tau$ where the process is twisted to follow an alternative distribution $q_{b,\tau}(x)$. Therefore, the overall Randon-Nikodym derivative is an average of the likelihood ratio $q_{b,t}(f(t))/\varphi_t(f(t))$ with respect to the density $h_b(t)$.
	
	Now, we present the specific forms of $h_b(t)$ and $q_{b,t}(x)$ for the computation of $w(b)$.
	For some positive constant $a$, let $\gamma$ be
	\begin{equation}\label{gamma}
	\gamma = b - a/b.
	\end{equation}
	We choose
	\begin{equation}\label{g}
	q_{b,t}(x) = \varphi_t(x) \frac{I(f(t)> \gamma)}{P(f(t)> \gamma)}
	\end{equation}
	that is the conditional distribution of $f(t)$ given that $f(t) > \gamma$.
	The distribution of $\tau$ is chosen to be
	\begin{equation}\label{h}
	h_b(t) = \frac{P(f(t) > \gamma)}{ \int_T P(f(t) > \gamma )dt}.
	\end{equation}
	The choice of $a$ in \eqref{gamma} does not affect the  efficiency results, nor the complexity analysis. To simplify the discussion, we fix $a$ to be unity, that is,
	\begin{equation}\label{gamma1}
	\gamma = b - 1/b.
	\end{equation}
	As we explained previously, the random index $\tau$ indicates the location where the random field is twisted. Furthermore, $q_{b,t}(x)$ is chosen to be the conditional distribution given a high excursion.
	We emphasize that it is necessary to set $\gamma$ slightly lower than the target level $b$. This will technically provide a stochastic bound on the distribution of the likelihood ratio.
	The index $\tau$ basically localizes the maximum of $f(t)$. Note that $\tau$ is not necessarily, but is very close to, $t_* \triangleq \arg\sup f(t)$.
Thus, as an approximation of the zero-variance change of measure, the distribution $h_b(t)$ should be chosen close to the conditional distribution of the maximum $t_*$ given that $f(t_*) > b$. This is our guideline to  choose $h_b(t).$ For each $t\in T$, the conditional probability that $f(t) > b$ given  $M > b$ is
	\begin{equation*}
	P(f(t) > b | M > b) = \frac{P(f(t) > b)}{P(M > b)}.
	\end{equation*}
	The denominator $P(M>b)$ is free of $t$ and thus $P(f(t) > b | M > b)\propto P(f(t) > b)$. Our choice of $h_b(t)\propto P(f(t) > \gamma)$  approximates  $P(f(t) > b | M > b)$ by replacing $b$ with $\gamma$ which is a very minor twist. This twist allows quite a  lot of technical convenience.
	With such choices of $h_b(t)$ and $q_{b,t}(x)$, the Radon-Nikodym takes the following form
	\begin{equation}\label{LRG}
	\frac{dQ_b}{dP} = \frac{\int_T I(f(t) >\gamma)dt}{\int_T P(f(t) > \gamma )dt }= \frac{mes(A_\gamma)}{\int_T P(f(t) > \gamma )dt},
	\end{equation}
	where $mes(\cdot)$ is the Lebesgue measure. According to Fubini's theorem, the denominator has another representation:
    \begin{equation*}
      \int_TP(f(t)>\gamma)dt=E[mes(A_\gamma)].
    \end{equation*}
	
	\begin{remark}
	For different problems, we may choose different $h_b(t)$ and $q_{b,t}(x)$ to approximate various conditional distributions. For instance, $q_{b,t}(x)$ was chosen to be in the exponential family of $\varphi_t(x)$ in \cite{liu2012conditional} for the derivation of tail approximations of $\int e^{f(t)}dt$.
	\end{remark}

\subsection{An adaptive discretization scheme and the algorithms}

\subsubsection{The continuous estimator and the challenges}
Based on the change of measure $Q_b$, a natural estimator for $w(b)$ is given by
\begin{equation}\label{est}
Z_b \triangleq I(M>b)\frac{dP}{dQ_b} = I(M>b) \frac{\int_T P(f(t) > \gamma)dt}{mes(A_\gamma)}.
\end{equation}
It is straightforward to obtain that $E_b (Z_b) = w(b)$,
where we use $E_b(\cdot )$ to denote the expectation under the measure $Q_b$. The second moment of $Z_b$ is given by
\begin{equation*}
E_b (Z_b^2) = E_b \Big[\frac{(\int_T P(f(t) > \gamma)dt)^2}{mes^2(A_\gamma)};M> b\Big].
\end{equation*}
We will later show that this continuous estimator (under regularity conditions) is strongly efficient, that is, $E_b(Z_b ^2) =O(w^2(b))$.
Similarly, a natural estimator for the numerator $E(\alpha(b);M>b)$ in \eqref{eq2} is
\begin{equation}\label{estX}
Y_b \triangleq \frac{\alpha(b)}{mes(A_\gamma)}\int_T P(f(t) > \gamma)dt,
\end{equation}
which, under regularity conditions, will be shown to estimate $E(\alpha(b) ; M>b)$ with strong efficiency.

For the implementation, we are not able to simulate the continuous field $f$ and therefore have to adopt a simulatable estimator, $\hat Z_b$, that approximates the continuous estimator $Z_b$. A natural approach is to consider the random field on  a finite set $T_m = \{t_1,...,t_m\}\subset T$ and use $P(\max_{T_m} f(t_i) > b)$ as an approximation of $w(b) = P(\sup_T f(t) > b)$. The bias is given by
$$P(\sup_T f(t) > b) - P(\max_{T_m} f(t) > b) = P(T_m\cap A_b = \emptyset, M > b).$$

In what follows, we explain without rigorous derivation that the above scheme usually induces  a heavy computational overhead. To simplify the discussion, we consider the special case that $f$ is a stationary process and its covariance function  satisfies the local expansion (slightly abusing the notation)
\begin{equation}\label{covhomo}
C (t)\triangleq Cov(f(s), f(s+t))  = 1 - |t|^\alpha + o(|t|^\alpha)
\end{equation}
Then, the process is H\"older continuous with coefficient $\alpha/2$. Under this setting, standard results yield the following estimate of the excursion set
$$E(mes(A_b) | M > b) = \Theta (b^{-2d/\alpha}).$$
Thanks to stationarity, conditional on the event $\{M>b\}$, the excursion set $A_b$ is a random subset of $T$, whose volume is of order $\Theta(b^{-2d/\alpha})$ and which is approximately uniformly distributed over the domain $T$.

	Notice that the bias term $P(T_m\cap A_b = \emptyset, M > b)$ is the probability that $T_m$ does not intersect with $A_b$.
	Therefore, if $m\ll b^{2d/\alpha}$, $T_m$ is too sparse such that it is not able to catch the set $A_b$ no matter how $T_m$ is distributed over $T$. Therefore,  it is necessary to have a lattice of size at least of order $O(b^{2d/\alpha})$. This heuristic calculation was made rigorous for smooth fields in \cite{adler2012efficient}.
	Thus, the computational complexity to generate the process $f$ on the set $T_m$ grows at a polynomial rate with $b$.
	In this paper, we aim at further reduction of the discretization size to a constant level
while still maintaining the $\varepsilon$-relative bias. For this sake, we need to seek among the random discrete sets.

\subsubsection{A closer look at the excursion set $A_\gamma$}


The proposed adaptive discretization scheme is closely associated with the three step simulation procedure under $Q_b$ and furthermore the distribution of $A_\gamma$. Among the three steps in Algorithm \ref{AlgCont}, Step 1 and Step 2 are implementable. It is Step 3, generating $\{f(t): t\neq \tau\}$ conditional on $(\tau, f(\tau))$, that requires discretization. In order to estimate $w(b)$ and to generate the estimator $Z_b$, we only need to simulate the random indicator $I(M>b)$ and  the volume of the excursion set $mes(A_\gamma)$ conditional on $(\tau,f(\tau))$. The term $\int_T P(f(t) > \gamma) dt$ is a deterministic number that can be computed via routine numerical methods.

	In what follows, we focus on the simulation and  approximation of $I(M>b)$ and $mes(A_\gamma)$. For illustration purpose, we provide the discussion for the homogeneous case with covariance function satisfying the expansion \eqref{covhomo}. We define
	$\zeta = b^{2/\alpha}$
	that characterizes the cluster size of $f$. Furthermore, we define the normalized process
	\begin{equation}\label{gt}
	g(t) = b(f(\tau + t/\zeta)-b).
	\end{equation}
	Note that $b\times (f(\tau) - \gamma )$ asymptotically follows an exponential distribution.
	Conditional on
	$f(\tau) = \gamma + z/b$
	the $g$ process has expectation
	\begin{eqnarray*}
	E_b[g(t) | f (\tau) = \gamma + z/b] &=& z-1- (1+o(1))|t/\zeta|^\alpha [b^2+ (z-1) ] .
	\end{eqnarray*}
	For all $z =o(b^2)$, we have that
	\begin{eqnarray*}
	E_b[g(t) | f (\tau) = \gamma + z/b] = z-1- (1+o(1))|t|^\alpha \qquad \mbox{as $b\to \infty.$}
	\end{eqnarray*}
	In addition, the covariance of $g(t)$ is
	$$Cov(g(s), g(t)) = ( |s|^\alpha + |t|^\alpha - |s-t|^\alpha) + o(1)$$
	where $o(1)\rightarrow 0$ as $b\rightarrow \infty$.
	Therefore, the distribution of $g(t)$ converges weakly to a Gaussian process with the above mean and covariance function. In addition, $f(\tau + t/\zeta)\geq \gamma$ if and only if $g(t) > 0$. The excursion set $A_\gamma$ can be written as
	\begin{equation*}
	A_\gamma = \tau + \zeta^{-1}\cdot A_{-1}^g \triangleq \{\tau + \zeta^{-1} t: t\in A_{0}^g\}.
	\end{equation*}
	where $A_{-1}^g =\{t: g(t)> -1\}$. Note that the process $g(t)$ is a Gaussian process with standard deviation $O(|t|^{\alpha/2})$ and a negative drift of order $O(-|t|^\alpha)$. Therefore, in expectation,  $g(t)$ goes below $0$ when $z\ll |t|^\alpha  $ where $z$ is asymptotically an exponential random variable. Thus, the excursion set $A^g_{-1}$ is of order $O(1)$. Furthermore,  $A_\gamma$ is a random set within $O(\zeta^{-1})$ distance from the random index $\tau$. The volume $mes(A_\gamma)$ is of order $O(\zeta^{-d})$.
	The above discussion quantifies the intuition that $\tau$ localizes the global maximum of $f$. It also localizes the excursion set $A_\gamma$. Therefore, upon considering approximating/computing $mes(A_\gamma)$ and $I(M>b)$, we should focus on the region around $\tau$.
	
	Conditional on a specific realization of the process $f$, we formulate the approximation of $mes(A_\gamma)$ as estimation problem. Note that the ratio $mes(A_\gamma)/ mes(T) \in [0,1]$ corresponds to the following probability
	$$\frac{mes(A_\gamma)}{mes(T) }=  P(U\in A_\gamma)$$
	where $U$ is a uniform random variable on the set $T$ with respect to the Lebesgue measure. Estimating $mes(A_\gamma)$ constitutes another rare-event simulation problem.

\subsubsection{An adaptive discretization scheme.}

Based on the  understanding of the excursion set $A_\gamma$, we setup a discretization scheme adaptive to the realization of $\tau$. To proceed, we provide the general form of $\zeta$ in presence of  slowly varying functions
	\begin{equation}\label{zeta}
	\zeta \triangleq \max\left\{|s|^{-1}: L_1(|s|)|s|^{\alpha_1} \geq b^{-2}~\mbox{or }L_2(|s|)|s|^{\alpha_2} \geq b^{-2}\right\}.
	\end{equation}
	In the case of constant variance, we formally define $\alpha_2 = \infty$ and thus $\zeta$ is defined as $\zeta \triangleq \max\{|s|^{-1}: L_1(|s|)|s|^{\alpha_1} \geq b^{-2}\}.$
	To facilitate the later discussion, we define  two  other scale factors
	\begin{equation}\label{zeta1}
	\zeta_i \triangleq \max\left\{|s|^{-1}: L_i(|s|)|s|^{\alpha_i} \geq b^{-2}\right\}, \quad i=1,2.
	\end{equation}
	Thus, it is straightforward to verify that
	\begin{equation*}
	\zeta = \max(\zeta_1,\zeta_2).
	\end{equation*}
Consider an isotropic distribution (centered around zero) with density $k(t)$, that is, $k(t) = k(s)$ if $|s| = |t|$.  We choose $k(t)$ to be reasonably heavy-tailed such that for some $\varepsilon_1 >0$
\begin{equation*}
k(t) \sim |t|^{-d-\varepsilon_1},\qquad \mbox{as $t\rightarrow \infty$.}
\end{equation*}
In addition there exists a $\kappa_1>0$ such that $k(t)\leq \kappa_1$ for all $t$.
For instance, we can choose $k(t)$ to be, but not necessarily restricted to, the multivariate $t$-distribution.
Furthermore, conditional on $\tau$, we define the rescaled density
\begin{equation}\label{l}
k_{\tau,\zeta} (t) =\zeta^d \times k( \zeta(t-\tau))
\end{equation}
that centers around $\tau$ and has scale  $\zeta^{-1}$.
We construct a $\tau$-adapted random subset of $T$ by generating  i.i.d.~random variables from the density $k_{\tau,\zeta}(t)$, denoted by $t_1,...,t_m$. Then, define
\begin{equation}\label{vol}
\widehat{mes} (A_\gamma) \triangleq \frac{1}{m}\sum_{i=1}^m \frac{I(f(t_i)>\gamma)}{k_{\tau,\zeta}(t_i)}
\end{equation}
that is an unbiased estimator of $mes(A_\gamma)$ in the sense that for each realization of $f$
$$E_{\tau,\zeta} [\widehat{mes}(A_\gamma)| f ] = mes (A_\gamma)$$
where $E_{\tau,\zeta} (\cdot | f)$ is the expectation with respect to $t_1,...,t_m$  under the density $k_{\tau,\zeta}$  for a particular realization of $f$. Notationally, if $t_i\notin T$, then $I(f(t_i) > \gamma) = 0$.

Similar to the approximation of $mes(A_\gamma)$, we use the same $\tau$-adapted random subset to approximate $I(M > b)$, that is,
$$I(\max_{i=1}^m f(t_i) > b) \approx I(M > b).$$
Based on the above discussions, we present the final algorithm.
\begin{algorithm}\label{AlgDis}
The algorithm consists of the following steps.
\begin{itemize}
	\item[Step 1.] Generate a random index $\tau\in T$ following the density $h_b(t)$ in \eqref{h}.
	\item[Step 2.] Conditional on the realization of $\tau$, sample $f(\tau)$ from $q_{b,t}(x)$ in \eqref{g}.
	\item[Step 3.] Conditional on the realization of $\tau$, generate i.i.d.~random indices $t_1,...,t_m$ following density $k_{\tau,\zeta}(t).$
	\item[Step 4.] Conditional on the realization of $(\tau, f(\tau))$, generate multivariate normal random vector $(f(t_1),...,f(t_m))$ from the original/nominal conditional distribution of $P(\cdot | f(\tau))$.
	\item[Step 5.] Output
	\begin{equation*}
	\hat Z_b = \frac{I(\max_{i=1}^m f(t_i) > b)}{\widehat{mes} (A_\gamma)} \int_T P(f(t)> \gamma)dt,
	\end{equation*}
	where $\widehat{mes} (A_\gamma)$ is given as in \eqref{vol}.
\end{itemize}
\end{algorithm}

For the discrete version of the estimator $ Y_b$  as in \eqref{estX}, we approximate it in a similar way. In Step 4 of the above algorithm, we simulate $\{(f(t_i),\xi(t_i)): i=1,...,m\}$ jointly conditional on  $(\tau,f(\tau))$.
Then, we output the estimator
\begin{equation*}
\hat Y_b = \frac{\hat \alpha(A_b)}{\widehat{mes}(A_\gamma)} \int_T P(f(t)> \gamma)dt
\end{equation*}
where
\begin{equation}\label{vol2}
\hat \alpha(A_b) \triangleq \frac{1}{m}\sum_{i=1}^m \frac{\xi(t_i)}{k_{\tau,\zeta}(t_i)}I(f(t_i)>b).
\end{equation}

\subsection{The main results}

We present the efficiency results of the proposed algorithms. The first theorem establishes that the continuous estimator is strongly efficient.

\begin{theorem}\label{ThmCont}
Consider a Gaussian random field $f$ that satisfies conditions A1-6. Let $Z_{b}$ be given as in \eqref{est} and Algorithm \ref{AlgCont}. Then, $Z_{b}$ is strongly efficient in estimating $w(b)$, that is, there exists $\kappa_{0}$ such that
$$E_b(Z_{b}^{2}) \leq \kappa_0 w^2(b)$$
for all $b>0$.
\end{theorem}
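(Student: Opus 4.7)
\textbf{Proof proposal for Theorem \ref{ThmCont}.}

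The plan is to reduce the second-moment bound to a calculation under the original measure $P$ and then exploit the local cluster structure of $f$ around its maximum. Starting from the definition of $Z_b$ and the likelihood ratio \eqref{LRG},
\begin{equation*}
E_b(Z_b^2)
 = E_b\!\left[I(M>b)\left(\frac{dP}{dQ_b}\right)^{\!2}\right]
 = E_P\!\left[I(M>b)\,\frac{dP}{dQ_b}\right]
 = \Big(\textstyle\int_T P(f(t)>\gamma)\,dt\Big)\cdot E_P\!\left[\frac{I(M>b)}{mes(A_\gamma)}\right].
\end{equation*}
Thus strong efficiency reduces to showing that $E_P[I(M>b)/mes(A_\gamma)]\le \kappa\,\zeta^d w(b)$ and that $\zeta^d \int_T P(f(t)>\gamma)\,dt \le \kappa'\, w(b)$ uniformly in $b$, where $\zeta$ is the cluster-scale defined in \eqref{zeta}.

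The heart of the argument is the first bound, which I would prove by localizing around the argmax. Let $t^\star$ be a point achieving $f(t^\star)=M$. Condition A2 together with A5 gives a local expansion of the form $f(t^\star)-f(t)\le \mbox{(Gaussian fluctuation)} + O(L_1(|t-t^\star|)|t-t^\star|^{\alpha_1/2}\cdot b)$ in a neighborhood of $t^\star$, and by the definition of $\zeta$ the deterministic term is $O(1/b)$ on a ball of radius $\zeta^{-1}$. In particular, on the event $\{M>b\}$ the value of $f$ stays above $\gamma=b-1/b$ throughout a ball of radius $c_0\zeta^{-1}$ around $t^\star$ with high conditional probability, uniformly in $b$. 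Combined with condition A6 this gives $mes(A_\gamma)\ge c\,\zeta^{-d}$ with positive probability, uniformly in the location of $t^\star$. To turn a positive-probability statement into an expectation bound I would split
\begin{equation*}
E_P\!\left[\frac{I(M>b)}{mes(A_\gamma)}\right]
\le c^{-1}\zeta^d\, w(b)\;+\; E_P\!\left[\frac{I(M>b,\,mes(A_\gamma)<c\zeta^{-d})}{mes(A_\gamma)}\right]
\end{equation*}
and control the residual term by cutting further according to the excess $f(t^\star)-b$: on $\{f(t^\star)-b\in[k/b,(k+1)/b)\}$ the local H\"older argument gives $mes(A_\gamma)\gtrsim (k+1)^{d/\alpha_1}\zeta^{-d}$ except on an event of probability decaying geometrically in $k$, which keeps the sum bounded.

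The second bound, $\zeta^d \int_T P(f(t)>\gamma)\,dt \le \kappa'\,w(b)$, is a classical Pickands-type lower estimate. For the Type 1 case both sides are of order $mes(T)\cdot \zeta^d \bar\Phi(b)$; for the Type 2 case both sides localize to a $\zeta_2^{-d}$-neighborhood of $t^\star$ and are of order $\zeta_1^d\zeta_2^{-d}\bar\Phi(b)$. I would quote a standard Pickands-style lower bound $w(b)\gtrsim \zeta^d \int_T P(f(t)>b)\,dt/mes(T)$ (with the obvious variance-adjusted analogue under A4 Type 2) and verify that $\int_T P(f(t)>\gamma)\,dt\sim \int_T P(f(t)>b)\,dt$ since $\gamma=b-1/b$ perturbs $\bar\Phi$ by only a bounded multiplicative factor. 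Combining these two bounds yields $E_b(Z_b^2)=O(w^2(b))$.

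The main obstacle, in my view, is the residual-term control in the second paragraph: $mes(A_\gamma)$ can in principle be arbitrarily small, so one cannot simply pull it out of the expectation. The delicate point is to marry the local Slepian-type description of $f$ near $t^\star$ with a chaining or partition-by-tube argument that turns ``small excursion set'' into a tail event with sufficient decay in the excess $f(t^\star)-b$. A secondary technical nuisance is keeping the bounds \emph{uniform in $b$} rather than merely asymptotic as $b\to\infty$; this requires handling small $b$ separately, using the trivial bound $mes(A_\gamma)\le mes(T)$ together with $w(b)$ bounded away from $0$ on compact $b$-intervals.
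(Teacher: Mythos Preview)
Your reduction $E_b(Z_b^2)=\int_T P(f(t)>\gamma)\,dt\cdot E_P[I(M>b)/mes(A_\gamma)]$ is correct, and your two target bounds are equivalent to the paper's pair $I_1\le\kappa\zeta^{2d}$, $I_2\ge\varepsilon_0\zeta^d$ with $I_j=E_b[mes(A_\gamma)^{-j};M>b]$; in particular your bound (2) is exactly $I_2\ge\varepsilon_0\zeta^d$ rewritten. The genuine methodological difference is \emph{where} the conditioning happens. You propose to work under $P$, localize at the (random) argmax $t^\star$, and slice by the overshoot $f(t^\star)-b$. The paper instead stays under $Q_b$ and conditions on $(\tau,f(\tau)=\gamma+z/b)$, which has an \emph{explicit} law by construction: $\tau\sim h_b$, and given $\tau$ the field admits closed-form conditional mean and covariance \eqref{Cond}. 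Borel--TIS applied to the conditioned field then gives $Q(mes(A_\gamma)<y^d\zeta^{-d},M>b)\le\exp(-y^{-\varepsilon_0})$ directly, with a covering $\{B_i\}$ of $T$ needed only to control the event that the supremum is attained far from $\tau$. Your route via $t^\star$ would need either a Slepian/Palm description of the field near its maximum (not available under A1--A6 without differentiability) or a covering argument that effectively rebuilds the role of $\tau$; the ``main obstacle'' you flag is precisely what the $Q_b$-conditioning dissolves. For bound (2) you propose quoting a Pickands-type lower bound; under the general slowly-varying H\"older assumptions A2--A5 this is not a free citation, and the paper in fact proves it from scratch (Lemmas \ref{LemmaI21}--\ref{LemmaI22}) by showing under $Q_b$ that $A_\gamma$ is contained in an $O(\zeta^{-1})$-neighborhood of $\tau$ with high probability, whence $I_2\ge\varepsilon_0\zeta^d$. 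A minor slip: in your local expansion the drift scales like $b\,L_1(|t|)|t|^{\alpha_1}$, not $|t|^{\alpha_1/2}$; the $\alpha_1/2$ exponent governs the fluctuation size.
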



The next theorem establishes the computation complexity of the discrete estimator.

\begin{theorem}\label{ThmDis}
	Consider a Gaussian random field $f$ that satisfies conditions A1-6.
	Let $\hat Z_b$ be the estimator given by Algorithm \ref{AlgDis}. There exists $\lambda>0$ such that for any $\varepsilon > 0$ if we choose $m=\lambda\varepsilon^{-d(2/\min(\alpha_1,\alpha_2)+2/\beta_1)}$, then
	$$|E_b(\hat Z_b)  - w(b)|\leq \varepsilon w(b)$$
	for all $b>0$. Furthermore, there exists $\kappa_0$ such that
	$$E_b(\hat Z_b^2) \leq \kappa_0 w^2(b). $$
\end{theorem}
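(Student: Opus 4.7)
The plan is to reduce both assertions to conditional estimates given the ``outer'' variables $(\tau,f)$, taking expectations over the inner iid sample $t_1,\dots,t_m$ only afterwards. Using $E_b(Z_b)=w(b)$, I would write $E_b(\hat Z_b)-w(b) = E_b(\hat Z_b - Z_b)$ and partition according to the three events $\{M\le b\}$, $\{M>b,\ \max_i f(t_i)>b\}$, and the ``missed'' set $\{M>b,\ \max_i f(t_i)\le b\}$. On the first the contribution is zero; on the second both indicators equal one and the discrepancy reduces to $\int_T P(f(t)>\gamma)\,dt\cdot (1/\widehat{mes}(A_\gamma)-1/mes(A_\gamma))$; on the third one loses exactly $Z_b$. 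This separates the bias into a miss term and a volume term, which I would treat in turn.

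For the miss term, conditioning on $(\tau,f)$ the probability that no sample point lands in $A_b$ is $(1-p(\tau,f))^m$, where $p(\tau,f)=\int_{A_b}k_{\tau,\zeta}(t)\,dt$. H\"older regularity (A2, A4, A5) forces $A_b$ to lie in an $O(\zeta^{-1})$ neighborhood of $t_*$ and to have volume of order $\zeta^{-d}$, while the scale $\zeta$ is built into $k_{\tau,\zeta}$; combined with the tail decay $k(s)\sim |s|^{-d-\varepsilon_1}$ this yields a lower bound of the form $p(\tau,f)\gtrsim (1+\zeta|\tau-t_*|)^{-d-\varepsilon_1}$. The distribution of $\zeta|\tau-t_*|$ under $Q_b$ is controlled by the same analysis underlying Theorem \ref{ThmCont}; combining moment bounds on $\zeta|\tau-t_*|$ with $(1-p)^m\le e^{-mp}$ then bounds $E_b[Z_b(1-p)^m]$ by a polynomial in $1/m$ times $w(b)$, which for $m$ as prescribed is $\le \varepsilon w(b)$.

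For the volume term I would use $1/\widehat{mes}-1/v = (v-\widehat{mes})/(v\widehat{mes})$ with $v\triangleq mes(A_\gamma)$ and split on the good event $G=\{|\widehat{mes}-v|\le v/2\}$. On $G$, $\widehat{mes}\ge v/2$ so the ratio is dominated by $2|v-\widehat{mes}|/v^2$, and Cauchy--Schwarz combined with $E_b(Z_b^2)=O(w(b)^2)$ from Theorem \ref{ThmCont} and the conditional variance bound $\operatorname{Var}(\widehat{mes}\mid\tau,f)\le E[Y_1^2\mid\tau,f]/m$, where the latter is $O(v^2/m)$ by the adaptive scaling of $k_{\tau,\zeta}$ relative to the typical location and size of $A_\gamma$, gives a $w(b)\cdot O(m^{-1/2})$ contribution. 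On $G^c$ I invoke the deterministic lower bound $\widehat{mes}\ge 1/(m\kappa_1\zeta^d)$, valid as soon as any $t_i$ exceeds $b>\gamma$, together with a Chebyshev estimate on $Q_b(G^c\mid\tau,f)$ and integrability of $v/\widehat{mes}$ against $Z_b$.

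Strong efficiency follows analogously: on $\{M>b\}$, $\hat Z_b^2 = Z_b^2(v/\widehat{mes})^2 I(\max_i f(t_i)>b)$, so $E_b(\hat Z_b^2) = E_b[Z_b^2\cdot E((v/\widehat{mes})^2 I(\max_i f(t_i)>b)\mid\tau,f)]$, and the same good-event/bad-event split shows the inner conditional expectation is uniformly $O(1)$, whence Theorem \ref{ThmCont} gives $E_b(\hat Z_b^2)=O(w(b)^2)$. The main obstacle will be translating the abstract H\"older exponents $\alpha_1,\alpha_2,\beta_1$ into the precise power of $\varepsilon^{-1}$ in $m$: this requires finely tracking how the localization of $A_b$ (through A2 and A4), the correlation modulus with exponent $\beta_1$, and the heavy tail of $k$ combine to produce the stated exponent $d(2/\min(\alpha_1,\alpha_2)+2/\beta_1)$, and controlling the conditional distribution of $\zeta|\tau-t_*|$ under $Q_b$ with enough uniformity in $b$ to make the above bounds simultaneously sharp.
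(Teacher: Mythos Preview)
Your decomposition into a ``miss'' term and a ``volume'' term is exactly the paper's split $\hat Z_b-Z_b=E(mes(A_\gamma))(J_1+J_2)$, and conditioning on $(\tau,f)$ to isolate the iid sample is the right move. Two of your key estimates, however, do not hold as written.

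\textbf{The miss term.} The lower bound $p(\tau,f)\gtrsim (1+\zeta|\tau-t_*|)^{-d-\varepsilon_1}$ presumes that $mes(A_b)$ is of order $\zeta^{-d}$ and that $A_b$ sits in a single $O(\zeta^{-1})$ ball; neither is true pathwise. On $\{M>b\}$ the overshoot $M-b$ can be arbitrarily small, and H\"older regularity then gives no useful \emph{lower} bound on $mes(A_b)$, so $p(\tau,f)$ can be arbitrarily small and $(1-p)^m$ close to $1$ for any $m$. The paper closes this gap with an overshoot split: it bounds $Q_b\big(\sup_{|t-\tau|<x\zeta^{-1}}f(t)\in(b,\,b+\delta_1 b^{-1}]\big)=O(\delta_1)$ directly, using that $z=b(f(\tau)-\gamma)$ has bounded (asymptotically exponential) density, and on $\{\sup f>b+\delta_1 b^{-1}\}$ intersects with a modulus-of-continuity event $D_{\lambda_3,\delta_1}=\{\sup_{|s-t|\le\lambda_3\zeta^{-1}}|f(s)-f(t)|\le\delta_1/b\}$ to force $mes(A_b\cap B(\tau,x\zeta^{-1}))\gtrsim\lambda_3^d\zeta^{-d}$. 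Choosing $\lambda_3\asymp \delta_1^{2/\alpha+1/\beta_1}$ so that $Q_b(D_{\lambda_3,\delta_1}^c)\le\delta$ is precisely what makes $\beta_1$ appear and produces the stated exponent in $m$; your outline has no mechanism that brings $\beta_1$ in.

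\textbf{The volume term and second moment.} The claim $\operatorname{Var}(\widehat{mes}\mid\tau,f)=O(v^2/m)$ is not correct without extra control: the conditional second moment equals $\zeta^{-d}\int_{A_{-1}^g}k(s)^{-1}ds$, which is comparable to $v\,\zeta^{-d}k^{-1}(t_f)$ with $t_f=\sup\{|s|:g(s)>-1\}$, so the ratio to $v^2$ blows up when $A_\gamma$ reaches far from $\tau$. More seriously, on your bad set $G^c$ the deterministic bound $\widehat{mes}\ge 1/(m\kappa_1\zeta^d)$ gives $(v/\widehat{mes})^2\le (m\kappa_1\zeta^d v)^2$, and multiplying by the Chebyshev estimate $Q_b(G^c\mid f)\lesssim \kappa_f/(m v^2\zeta^{2d})$ leaves a contribution of order $m\,\kappa_f$, which \emph{grows} with $m$ rather than being uniformly $O(1)$. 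The paper replaces Chebyshev by a binomial large-deviations bound $Q_b\big(\sum_i I(t_i\in A_\gamma)\le p_f m/2\,\big|\, f\big)\le e^{-\varepsilon_0 m p_f}$ and then controls the tails of $t_f$ and $p_f^{-1}$ via the excursion-set estimates underlying Theorem~\ref{ThmCont} (Lemmas~\ref{LemmaI21}--\ref{LemmaI22}); this is what makes the bad-set contribution decay in $m$.
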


With the above results, we generate $n$ i.i.d.~replicates of $\hat Z_b$, denoted by $\hat Z_b^{(1)}$, ...,$\hat Z_b^{(n)}$, with $m$ chosen as in the theorem such that the averaged estimator, $\frac 1 m \sum_{i=1}^n \hat Z_b^{(i)}$, has its bias bounded by $\varepsilon  w(b)/2$ and its variance is bounded by $\kappa_0 w^2(b) / n$. To achieve $\varepsilon$ relative error with at $(1-\delta)$ confidence, we need to choose $n= \frac{4\kappa_0}{\varepsilon^2\delta}$, that is,
$$P\Big(\Big|\frac 1 m \sum_{i=1}^n \hat Z_b^{(i)} - w(b)\Big|>\varepsilon w(b)\Big)< \delta$$
and the total computational complexity is of order $O(m^3\varepsilon^{-2}\delta^{-1})$, where $m^3$ is the complexity of computing the eigenvalue of an $m\times m$ covariance matrix.

\begin{theorem}\label{ThmInt}
	Consider a Gaussian random field $f$ that satisfies conditions A1-6. There exists $0<a_1<a_2<\infty$, such that $\xi(t) \in [a_1,a_2]$ almost surely. We have the following results
	\begin{enumerate}
	 \item Then, there exists  $\kappa_0$ such that for all $b>0$
\begin{equation*}
  E_b(Y_b^2 )\leq \kappa_0 u^2(b)
\end{equation*}
where
$u(b) = E(\alpha(b) ; M > b).$
	\item There exists $\lambda$ such that for each $\varepsilon>0$   if we choose $m=\lambda\varepsilon^{-d(2/\min(\alpha_1,\alpha_2)+2/\beta_1)}$
then
	$$|E_b(\hat Y_b) - u(b)| \leq \varepsilon u(b)$$
	and
	$$E_b(\hat Y_b^2) \leq \kappa_0 u^2(b).$$
	\end{enumerate}
\end{theorem}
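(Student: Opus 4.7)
The plan is to exploit the boundedness of $\xi$ and reduce the problem to quantities that are directly controllable under the likelihood ratio \eqref{LRG}. Since $\xi(t)\in[a_1,a_2]$ and $\gamma<b$ implies $A_b\subseteq A_\gamma$ and hence $mes(A_b)\leq mes(A_\gamma)$, one obtains
\begin{equation*}
Y_b^2\leq a_2^2\,\frac{mes(A_b)}{mes(A_\gamma)}\,\left(\int_T P(f(t)>\gamma)\,dt\right)^2.
\end{equation*}
Using the explicit form of $dQ_b/dP$ in \eqref{LRG} together with Fubini gives $E_b[mes(A_b)/mes(A_\gamma)]=\int_T P(f(t)>b)\,dt\,/\int_T P(f(t)>\gamma)\,dt$, so
\begin{equation*}
E_b[Y_b^2]\leq a_2^2\int_T P(f(t)>\gamma)\,dt\cdot\int_T P(f(t)>b)\,dt.
\end{equation*}
Since $\gamma-b=-1/b$, a standard Gaussian tail estimate yields $\int_T P(f(t)>\gamma)\,dt=O(\int_T P(f(t)>b)\,dt)$ uniformly in large $b$. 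The lower bound $u(b)\geq a_1\int_T P(f(t)>b)\,dt$ then delivers $E_b[Y_b^2]\leq\kappa_0\,u(b)^2$.

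\textbf{Part 2, variance bound.} The same sandwich works for the discrete estimator: from $\xi\leq a_2$ and $\widehat{mes}(A_b)\leq\widehat{mes}(A_\gamma)$,
\begin{equation*}
\hat Y_b^2\leq a_2^2\,\frac{\widehat{mes}(A_b)}{\widehat{mes}(A_\gamma)}\,\left(\int_T P(f(t)>\gamma)\,dt\right)^2,
\end{equation*}
where the ratio lies in $[0,1]$. I would then condition on $(f,\tau)$ and split on the concentration event $\mathcal{E}=\{\widehat{mes}(A_\gamma)\geq mes(A_\gamma)/2\}$. On $\mathcal{E}$ the ratio is at most $2\widehat{mes}(A_b)/mes(A_\gamma)$, whose conditional mean is $2\,mes(A_b)/mes(A_\gamma)$, and the analysis reduces to Part 1. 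On $\mathcal{E}^c$, Chebyshev applied to the i.i.d.\ empirical mean $\widehat{mes}(A_\gamma)$ together with the variance estimate $\mathrm{Var}[\widehat{mes}(A_\gamma)\mid f,\tau]\lesssim mes(A_\gamma)\cdot\zeta^{-d}/m$, which comes from the $\zeta^{-1}$-localization of $A_\gamma$ under $Q_b$ and the shape of $k_{\tau,\zeta}$, gives an $O(1/m)$ bad-event probability; the crude bound $\hat Y_b^2\leq a_2^2(\int_T P(f(t)>\gamma)\,dt)^2$ then absorbs this contribution into $\kappa_0 u(b)^2$ for $m$ at the prescribed rate.

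\textbf{Part 2, bias (main obstacle).} The bias $E_b[\hat Y_b]-u(b)=E_b[\hat Y_b-Y_b]$ is the main technical step. The identity
\begin{equation*}
\frac{\hat\alpha(A_b)}{\widehat{mes}(A_\gamma)}-\frac{\alpha(b)}{mes(A_\gamma)}=\frac{\hat\alpha(A_b)-\alpha(b)}{mes(A_\gamma)}+\hat\alpha(A_b)\,\frac{mes(A_\gamma)-\widehat{mes}(A_\gamma)}{mes(A_\gamma)\widehat{mes}(A_\gamma)}
\end{equation*}
is convenient because the first summand has conditional mean zero given $(f,\tau,\xi)$ by unbiasedness of $\hat\alpha(A_b)$, so the bias is entirely driven by the covariance between $\hat\alpha(A_b)$ and $\widehat{mes}(A_\gamma)$ encoded in the second summand. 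Conditionally on $(f,\tau,\xi)$ this covariance scales as $1/m$, and the matching $\zeta^{-d}$ scaling of $\alpha(b)$ and $mes(A_\gamma)$ under $Q_b$ makes the resulting \emph{relative} bias $b$-independent. The main obstacle, paralleling the proof of Theorem \ref{ThmDis}, is to control the resulting expression \emph{uniformly} over $(\tau,f,\xi)$ and to track how the H\"older exponents of $r$ and $\sigma$ enter through the conditional moments of $I(f(t_1)>\gamma)/k_{\tau,\zeta}(t_1)$ and $\xi(t_1)I(f(t_1)>b)/k_{\tau,\zeta}(t_1)$; the resulting worst-case calibration produces exactly the prescribed $m=\lambda\varepsilon^{-d(2/\min(\alpha_1,\alpha_2)+2/\beta_1)}$, with the same interpretation of the exponent as in Theorem \ref{ThmDis}.
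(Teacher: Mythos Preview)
Your Part 1 is correct and essentially identical to the paper's: both use $\alpha(b)\le a_2\,mes(A_\gamma)$ to bound $Y_b$ deterministically by $a_2\int_TP(f(t)>\gamma)\,dt$, and then compare this integral with $u(b)\ge a_1\int_TP(f(t)>b)\,dt$. Your extra factor $mes(A_b)/mes(A_\gamma)$ is a harmless refinement.

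For the second moment of $\hat Y_b$, note that your deterministic sandwich already finishes the job without any concentration argument: since $\hat\alpha(A_b)\le a_2\,\widehat{mes}(A_b)\le a_2\,\widehat{mes}(A_\gamma)$, one has $\hat Y_b\le a_2\int_TP(f(t)>\gamma)\,dt$ pointwise, so $E_b[\hat Y_b^2]\le a_2^2\big(\int_TP(f(t)>\gamma)\,dt\big)^2=O(u(b)^2)$ for \emph{every} $m$. The split on $\mathcal E$ is unnecessary. The paper instead bounds $E_b[(\hat Y_b-Y_b)^2]$ via a $J_1,J_2$ decomposition mirroring Theorem~\ref{ThmDis}; your route is shorter.

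Your bias analysis takes a genuinely different path from the paper and is in one respect slicker, but your explanation of it is off. The paper writes $\hat Y_b-Y_b=E[mes(A_\gamma)](J_1+J_2)$ with $J_1=\frac{\alpha(b)}{mes(A_\gamma)}\big[I(M>b)-I(\max_i f(t_i)>b)\big]$; controlling $E_b J_1$ reduces to $Q(M>b,\max_i f(t_i)\le b)$, and \emph{this} is what forces the exponent $d(2/\alpha+2/\beta_1)$ in $m$. Your decomposition absorbs that contribution into the mean-zero term $(\hat\alpha-\alpha)/mes(A_\gamma)$, so there is no $J_1$-type bias at all. What remains is
\[
\Big|\hat\alpha\cdot\frac{mes(A_\gamma)-\widehat{mes}(A_\gamma)}{mes(A_\gamma)\,\widehat{mes}(A_\gamma)}\Big|
\;\le\; a_2\,\frac{|mes(A_\gamma)-\widehat{mes}(A_\gamma)|}{mes(A_\gamma)}\cdot I(M>b),
\]
using $\hat\alpha/\widehat{mes}(A_\gamma)\le a_2$ and the fact that the left side vanishes when $\hat\alpha=0$. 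Taking conditional variance of $\widehat{mes}(A_\gamma)$ given $(f,\tau)$ as in \eqref{term1} and then Cauchy--Schwarz with the tail bound \eqref{mesub} and the moment control on $k^{-1}(t_f)$ gives $E_b$ of the right side $=O(m^{-1/2})$. Two corrections to your write-up follow from this. First, your ``$O(1/m)$ covariance'' heuristic is only a Taylor expansion; making it rigorous requires exactly the $1/\widehat{mes}(A_\gamma)$ control you flag as the obstacle, and the clean bound one actually gets is $O(m^{-1/2})$, not $O(1/m)$. Second, the H\"older exponents do \emph{not} enter your bias term through ``conditional moments of $I(f(t_1)>\gamma)/k_{\tau,\zeta}(t_1)$'': those moments are controlled by $k^{-1}(t_f)$ and $(\zeta^d mes(A_\gamma))^{-1}$, whose $Q_b$-moments are bounded uniformly in $b$. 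So your route in fact yields the bias bound with only $m\ge \lambda\varepsilon^{-2}$, which is no larger than the theorem's $m$ and hence still proves the statement; but the sentence attributing the exponent $d(2/\alpha+2/\beta_1)$ to the second summand is incorrect.
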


In the previous theorem, we require that the process $\xi(t)$ take values in a positive interval $[a_1,a_2]$. This constraint is imposed   for technical convenience. There are several ways in which we can relax this condition. If $\xi(t)$ is independent of $f(t)$, then, we can relax the interval to be $(0,\infty)$.  In the case when $\xi(t)\in (0,\infty)$ and $\xi(t)$ and $f(t)$ are dependent, we may need to modify the algorithm. This is because $\xi(t)$ could be very close to zero on the excursion set $A_b$ and therefore
the estimator \eqref{vol2} may not be strongly efficient in estimating $\alpha(t)$. In this case, we may further change the sampling distribution of $\{(f(t_i),\xi(t_i)): i=1,...,m\}$ to reduce the variance of $\hat \alpha(t)$. These modifications have to be case-by-case and they can be handled by routine variance reduction techniques that we do not pursue in this paper.

\begin{remark}
  There are cases that the current setting does not cover. For instance, the process is anisotropic in the sense that  $\alpha$ depends on the direction; see, for instance, \cite{Pit95} for more discussions. We believe that the results of Theorem \ref{ThmCont} hold under this setting. We need to follow the same idea and apply our proof technique in different directions. For the discretization scheme, one needs to define the scale $\zeta$ for different directions and rescale the density $k(t)$ differently among different directions. Thus, we expect the results of Theorem \ref{ThmDis} and Theorem \ref{ThmInt} to hold.
\end{remark}

\begin{remark}\label{rempick}
The current work provides a means to numerically compute the Pickands constant. The basic idea is to numerically compute tail probability $w(b)$ for $b$ very large and for some stationary process living on $[0,1]$ with covariance function $C(t) = e^{-|t|^\alpha}$. Denote the estimate by $\hat w(b)$. Then, an estimate of the Pickands' constant is given by $$\hat H_\alpha = \frac{\hat w(b)} {b^{2/\alpha}P(Z>b)}.$$
\end{remark}

\section{Numerical analysis}\label{SecNum}

In this section, we present four numerical examples to show  the   performance of our algorithms.
First, we applied our algorithm to a one dimensional Gaussian field whose tail probability is in a closed form. For the discretization, we deploy $m=20$ points when $d=1$ and 40 points when $d=2$. To make sure that the bias is small enough, we have run the simulations with 10 times more points and the results didn't change substantially. We only report the results with fewer points to illustrate the efficiency.

\begin{example}
Consider $f(t)=X \cos t+Y\sin t,$ $T=[0,3/4]$, where $X$ and $Y$ are independent standard Gaussian variables. The probability $P(\sup_{t\in T}f(f)>b)$ is known to be in closed form (\cite{Adl81}), and is given by
\begin{equation}\label{Truevalue}
  P(\sup_{0\leq t\leq 3/4 }f(t)>b)=1-\Phi(b)+\frac{3}{8\pi}e^{-b^2/2}.
\end{equation}
Table 1 shows the simulation results. \end{example}
\begin{table}[htbp]\label{Table1d}
  \centering
    \begin{tabular}{lccc}
	\hline
    b     & true value & est   & std err \\
    \hline
    3     & 2.7E-03 & 2.6E-03 & 1.1E-04 \\
    4     & 7.2E-05 & 7.2E-05 & 3.2E-06 \\
    5     & 7.3E-07 & 7.3E-07 & 3.4E-08 \\
    6     & 2.8E-09 & 2.8E-09 & 1.4E-10 \\
    7     & 4.0E-12 & 4.1E-12 & 2.0E-13 \\
    8     & 2.2E-15 & 2.1E-15 & 8.4E-17 \\
    \hline
    \end{tabular}%
      \caption{Simulation results for the cosine process where n=1000, m=20, $k(t)$ is chosen to be the density function of $t-$distribution with degrees of freedom $3$. The ``true value" is calculated from \eqref{Truevalue}}.

\end{table}%

The following three examples treat random fields over a two dimensional square.

\begin{example}\label{ex2}
  Consider a mean zero, unit variance, stationary and smooth Gaussian field over $T=[0,1]^2$, with covariance function
  \begin{equation*}
    C(t)=e^{-|t|^2}.
  \end{equation*}
  Let $\xi(t)=1$, then $E(\int_{A_b}\xi(t)dt)$ is in a closed form and is given by
  \begin{equation*}
  E\Big(\int_{A_b}\xi(t)dt\Big)=E(mes(A_b))=1-\Phi(b).
  \end{equation*}
  Table \ref{Tableex2} shows the simulation results.
\end{example}

\begin{table}[htbp]\label{Tableex2}
  \centering
    \begin{tabular}{lcccccc}
    \hline
   &\multicolumn{2}{c}{$P(\sup_T f(t)>b)$}&&\multicolumn{3}{c}{$E(mes(A_b))$}\\\cline{2-3}\cline{5-7}
    b     & est   & std err && true value & est   & std err \\
    \hline
3     & 9.3E-03 & 3.6E-04 && 1.3E-03 & 1.4E-03 & 4.0E-05 \\
4     & 3.4E-04 & 1.5E-05 && 3.2E-05 & 3.3E-05 & 9.2E-07 \\
5     & 4.2E-06 & 1.7E-07 && 2.9E-07 & 3.0E-07 & 8.2E-09 \\
6     & 1.9E-08 & 8.1E-10 && 9.9E-10 & 1.0E-09 & 2.8E-11 \\
7     & 3.3E-11 & 1.3E-12 && 1.3E-12 & 1.4E-12 & 3.7E-14 \\
8     & 1.9E-14 & 7.1E-16 && 6.7E-16 & 6.7E-16 & 1.8E-17 \\
\hline
    \end{tabular}%
      \caption{Simulation results for Example \ref{ex2}, where n=1000, m=40. $k(t)=\frac{25}{32\pi}(1+0.64 |t|^2)^{-3}$, the density function of multivariate $t-$distribution with degrees of freedom $4$ , and  $\mu=0$; $\Sigma=0.64 I_2$. }

\end{table}

\begin{example}\label{ex3}
  Consider a continuous inhomogenous Gaussian field on $T=[0,1]^2$ with mean and covariance function
  \begin{equation*}
    \mu(t)=0.1t_1+0.1t_2 \quad C(s,t)=e^{-|t-s|^2}.
  \end{equation*}
  Let $\xi(t)=1$, then $E(\int_{A_b}\xi(t)dt)$ is in a closed form and is given by
  \begin{equation*}
    E\Big(\int_{A_b}\xi(t)dt\Big)=E(mes(A_b))=\int_T P(f(t)>b)dt.
  \end{equation*}
  Table 3 shows the simulation results.
  \end{example}

\begin{table}[htbp]\label{Tableex3}
  \centering
    \begin{tabular}{lcccccc}
    \hline
     &\multicolumn{2}{c}{$P(\sup_T f(t)>b)$}&&\multicolumn{3}{c}{$E(mes(A_b))$}\\\cline{2-3}\cline{5-7}
    b     & est   & std err && true value & est   & std err \\
  \hline
3     & 1.2E-02 & 5.6E-04 && 1.9E-03 & 1.8E-03 & 5.4E-05 \\
4     & 5.0E-04 & 1.9E-05 && 4.8E-05 & 5.0E-05 & 1.4E-06 \\
5     & 7.2E-06 & 2.8E-07 && 4.9E-07 & 5.1E-07 & 1.4E-08 \\
6     & 3.5E-08 & 1.4E-09 && 1.9E-09 & 1.9E-09 & 5.4E-11 \\
7     & 6.7E-11 & 2.7E-12 && 2.7E-12 & 2.6E-12 & 7.7E-14 \\
8     & 4.5E-14 & 1.9E-15 && 1.5E-15 & 1.5E-15 & 4.3E-17 \\
\hline
\end{tabular}%
    \caption{Simulation result for Example \ref{ex3}, where n=1000, m=40, $k(t)$ is the same as that of Example 2.}

\end{table}
\begin{example}\label{ex4}
Consider the continuous Gaussian field living on $T=[0,1]^2$ with mean and covariance function
\begin{equation*}
  \mu(t)=0.1t_1+0.1t_2\quad C(s,t)=e^{-|t-s|/4}.
\end{equation*}
Let $\xi=1$ and then the true expectation is
\begin{equation*}
  E\Big(\int_{A_b}\xi(t)dt\Big)=E(mes(A_b))=\int_TP(f(t)>b)dt.
\end{equation*}
Table 4 shows the simulation results.
\end{example}

\begin{table}[tph]
\centering
\begin{tabular}{lcccccc}
\hline
  &\multicolumn{2}{c}{$P(\sup_T f(t)>b)$}&&\multicolumn{3}{c}{$E(mes(A_b))$}\\\cline{2-3}\cline{5-7}
  b & est & std err & &true value & est & std err\\\hline
3     & 1.4E-02 & 6.6E-04 && 1.9E-03 & 1.9E-03 & 5.3E-05 \\
4     & 7.4E-04 & 4.4E-05 && 4.9E-05 & 5.1E-05 & 1.4E-06 \\
5     & 1.5E-05 & 7.5E-07 && 4.9E-07 & 5.1E-07 & 1.4E-08 \\
6     & 9.9E-08 & 5.2E-09 && 1.9E-09 & 1.9E-09 & 5.4E-11 \\
7     & 2.9E-10 & 1.3E-11 && 2.7E-12 & 2.7E-12 & 7.8E-14 \\
8     & 2.6E-13 & 1.4E-14 && 1.5E-15 & 1.5E-15 & 4.3E-17 \\\hline
\end{tabular}
\caption{Simulation result for Example \ref{ex4}, where n=1000, m=40, $k(t)=\frac{1}{8\pi}(1+|t|^2)^{-3}$, the density function of multivariate $t-$distribution, with degrees of freedom $4$, $\mu=0$, $\Sigma=4I_2$.}
\end{table}

For all the examples, the ratios of standard error over the estimated value do not increase as $b$ increase. This is consistent with our theoretical analysis. Also note that $m$  does not increase as the level increase, which reduces the computational complexity significantly. Overall, the numerical estimates are very accurate.

\section{Proof of Theorem \ref{ThmCont}}\label{SecCont}

Throughout the proof, we will use $\kappa$ as a generic notation to denote  large and not-so-important constants whose value may vary from place to place. Similarly, we use $\varepsilon_0$ as a generic notation for  small positive constants.

The first result we cite is  the  Borel-TIS (Borel-Tsirelson-Ibragimov-Sudakov) inequality \cite{AdlTay07,BOR,CIS} that will be used very often in our technical development.

\begin{proposition}\label{PropBorel}
Let $f(t)$ be a centered Gaussian process almost surely bounded in $T$. Then,
$$E[\sup_{t\in T}f(  t)  ]< \infty$$
and
\[
P\Big(\sup_{t\in T}f\left(  t\right)  -E[\sup_{t\in T}f(  t)  ]\geq b\Big)\leq\exp\left(  -b^{2}/(2\sigma_{T}^{2})\right)  .
\]
\end{proposition}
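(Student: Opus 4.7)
The plan is to deduce the proposition from two classical ingredients: the Gaussian concentration inequality for Lipschitz functions of a standard Gaussian vector, and a separability argument that reduces the uncountable index set $T$ to a countable dense one. First, I would fix a countable dense subset $\{s_1, s_2, \ldots\} \subset T$ (available since $T$ is a compact subset of $\mathbb{R}^d$) and set $M_n = \max_{1 \leq i \leq n} f(s_i)$. Almost sure continuity of $f$ (Condition A1) gives $M_n \uparrow M := \sup_{t \in T} f(t)$ almost surely. Representing $(f(s_1), \ldots, f(s_n))^{\top}$ as $A_n \xi$ for a standard Gaussian $\xi$ in some $\mathbb{R}^N$, the rows of $A_n$ have Euclidean norms $\sigma(s_i) \leq \sigma_T$, so the map $\phi_n : x \mapsto \max_{i \leq n} (A_n x)_i$ is Lipschitz on $\mathbb{R}^N$ with constant at most $\sigma_T$.

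Next, I would invoke the Gaussian concentration inequality for Lipschitz functions: for any $L$-Lipschitz $\phi : \mathbb{R}^N \to \mathbb{R}$ and standard Gaussian $\xi$,
\begin{equation*}
P\bigl(\phi(\xi) - E\phi(\xi) \geq b\bigr) \leq \exp\bigl(-b^2/(2L^2)\bigr), \qquad b > 0.
\end{equation*}
This is the main obstacle and the analytic heart of the result; classical derivations proceed either through the Gaussian isoperimetric inequality of Borell and Sudakov--Tsirelson, or through the Ornstein--Uhlenbeck semigroup/interpolation argument of Maurey and Pisier. Applied to $\phi_n$ with $L = \sigma_T$ it yields $P(M_n - E M_n \geq b) \leq \exp(-b^2/(2\sigma_T^2))$ for every $n$, and applying it to $-\phi_n$ gives the symmetric lower tail.

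Finally, I would bootstrap the finiteness of $E M$. Each $E M_n$ is trivially finite (a maximum of finitely many centered Gaussians) and monotone nondecreasing in $n$. If $E M_n \to \infty$, then for any fixed $b$ the two-sided concentration bound forces $P(M \leq b) \leq P(M_n \leq b) \leq 2 \exp(-(E M_n - b)^2/(2\sigma_T^2)) \to 0$, contradicting $M < \infty$ a.s. Hence $E M = \lim_n E M_n < \infty$, settling the first claim. For the tail estimate on $M$, use $M_n \uparrow M$ and $E M_n \uparrow E M$: for any $\varepsilon > 0$ and $b > 0$,
\begin{equation*}
P(M - E M \geq b) \leq \liminf_n P\bigl(M_n - E M_n \geq b - \varepsilon\bigr) \leq \exp\bigl(-(b - \varepsilon)^2/(2\sigma_T^2)\bigr),
\end{equation*}
and letting $\varepsilon \downarrow 0$ gives the stated bound. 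The key difficulty is the Gaussian concentration inequality itself; once it is in hand, the rest is a measure-theoretic limiting argument.
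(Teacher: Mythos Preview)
Your argument is a correct and standard derivation of the Borel--TIS inequality via Gaussian concentration for Lipschitz functionals plus a finite-dimensional approximation; the only minor slip is the factor of $2$ in the lower-tail bound (one-sided concentration on $-\phi_n$ already gives $P(M_n\leq b)\leq \exp(-(EM_n-b)^2/(2\sigma_T^2))$ without the $2$), and the limiting step for the tail should be phrased via $\{M>EM+b\}\subset\liminf_n\{M_n-EM_n>b-\varepsilon\}$ followed by a strict-to-nonstrict passage, but these are cosmetic.

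The paper, however, does not prove this proposition at all: it is quoted as the classical Borel--Tsirelson--Ibragimov--Sudakov inequality with references to \cite{AdlTay07,Bor75,CIS}, and is used throughout as a black box. So there is no ``paper's own proof'' to compare against; your write-up simply supplies what the paper takes for granted. If anything, note that the proposition as stated assumes only almost-sure boundedness, whereas you invoke Condition~A1 (continuity) to get $M_n\uparrow M$. The general Borel--TIS statement requires separability rather than continuity, so strictly speaking your reduction is for the paper's setting rather than the proposition in its full generality---but since the paper only ever applies it to continuous fields, this is harmless here.
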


In this proof we  need to establish a lower bound of the probability
\begin{equation*}
  w(b)= E_b\Big[\frac{1}{mes(A_\gamma)};M>b\Big]\int_T P(f(t)>\gamma)dt
\end{equation*}
and an upper bound of the second moment
\begin{equation*}
  E_b(Z_b^2)=E_b\Big[\frac{1}{mes^2(A_\gamma)};M>b\Big]\Big[\int_T P(f(t)>\gamma)dt\Big]^2
\end{equation*}
The central analysis lies in the following two quantities:
\begin{equation}\label{I1}
I_1=E_b\Big[\frac{1}{mes^2(A_\gamma)};M>b\Big], \qquad I_2=E_b\Big[\frac{1}{mes(A_\gamma)};M>b\Big].
\end{equation}
We will show that there exist constants $\kappa$ and $ \varepsilon_0$ such that
\begin{equation}\label{moment}
  I_1\leq \kappa \zeta^{2d}, \qquad I_2\geq \varepsilon_0 \zeta^d.
\end{equation}
If these inequalities are proved, then
\begin{equation*}
  \limsup_{b\to\infty} \frac{I_1}{I_2^2}<\infty
\end{equation*}
is in place, and we finish our proof for Theorem \ref{ThmCont}. For the rest of the proof, we establish these two inequalities.

To proceed, we describe the conditional Gaussian random field given $f(\tau)$.
First, if we write $f(\tau)=\gamma+{z}/{b}$, then $z$ asymptotically follows an exponential distribution with expectation $\sigma^2(\tau)$.
Conditional on $f(\tau)=\gamma+{z}/{b}$, let
\begin{equation}\label{decomp}
f(t+\tau)=E[f(t+\tau)|f(\tau)=\gamma+{z}/{b}]+f_0(t).
\end{equation}
Thus, given $f(\tau)$, $f_0(t)$ is a zero-mean Gaussian process.
By means of conditional Gaussian calculation, the conditional mean and conditional covariance function are given by
\begin{eqnarray}\label{Cond}
\mu_\tau(t)&=&E(f(t+\tau)|f(\tau)=\gamma+{z}/{b})\\
&&~~~=\mu(t+\tau)+\frac{\sigma(\tau+t)}{\sigma(\tau)}r(\tau+t,\tau)(\gamma+{z}/{b}-\mu(\tau))\notag\\
C_{0}(s,t)&=&Cov(f_0(s),f_0(t))\notag\\
&&~~~=\sigma(\tau+s)\sigma(\tau+t)[r(s+\tau,t+\tau)-r(\tau+t,\tau)r(\tau+s,\tau)].\notag
\end{eqnarray}
The next lemma  controls the conditional variance.
\begin{lemma}\label{LemmaMu}
Under condition A1-6,  there exists constants $\lambda_1>0$, such that for all $\tau\in T$, and $b$ large enough,
\begin{enumerate}
  \item[(i)] for all $t+\tau \in T$,
  \begin{equation*}
    C_0(t,t)\leq \lambda_1 L_1(|t|)|t|^{\alpha_1};
  \end{equation*}
  \item[(ii)] for $s,t\in T$
  \begin{equation*}
    Var(f_0(s)-f_0(t))\leq \lambda_1\max(L_1(|t-s|)|t-s|^{\alpha_1}, L_2(|t-s|)|t-s|^{\alpha_2});
  \end{equation*}
  \item[(iii)] for  any $\varepsilon>0$, there exists $\delta >0$ (independent of $b$) such that for each $t$
  \begin{equation*}
E(\sup_{|s-t|\leq \delta \zeta^{-1}} f_0(s))=\frac \varepsilon b.
  \end{equation*}
\end{enumerate}
\end{lemma}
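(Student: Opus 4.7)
The plan is to dispatch the three parts in order. Parts (i) and (ii) are direct covariance computations from the explicit formula \eqref{Cond} combined with conditions A2 and A4. Part (iii) is the main obstacle; it requires a rescaling argument followed by a supremum bound for the rescaled process.

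For part (i), I would start from $C_0(t,t) = \sigma^2(\tau+t)\bigl(1-r^2(\tau+t,\tau)\bigr)$, obtained directly from \eqref{Cond}. Factoring $1-r^2 = (1-r)(1+r)\leq 2(1-r)$ and applying \eqref{corexp} gives $C_0(t,t)\leq \kappa L_1(|t|)|t|^{\alpha_1}$, with $\kappa$ uniform in $\tau\in T$ because $\Delta_\tau$ is continuous and $\sigma$ is bounded on the compact set $T$. For part (ii), since conditioning reduces variance, $Var(f_0(s)-f_0(t))\leq Var(f(\tau+s)-f(\tau+t))$, and the latter decomposes as
$(\sigma(\tau+s)-\sigma(\tau+t))^2 + 2\sigma(\tau+s)\sigma(\tau+t)\bigl(1-r(\tau+s,\tau+t)\bigr)$.
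Condition A4 bounds the first piece by $\kappa L_2(|s-t|)^2|s-t|^{2\alpha_2}$, which on the bounded domain $T$ is absorbed into $\kappa' L_2(|s-t|)|s-t|^{\alpha_2}$; condition A2 together with $\sigma(\tau+s)\sigma(\tau+t)\leq \sigma_T^2$ bounds the second piece by $\kappa L_1(|s-t|)|s-t|^{\alpha_1}$. Taking the max yields the claim.

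For part (iii), I would rescale by defining $g_b(s) = b\cdot f_0(\zeta^{-1}s)$ for $s$ in a bounded region. Using part (ii), the defining property of $\zeta$ in \eqref{zeta} (namely $L_i(\zeta^{-1})\zeta^{-\alpha_i}\asymp b^{-2}$), and the Potter bounds for the slowly varying $L_1, L_2$, the rescaled process $g_b$ satisfies
$Var(g_b(s)-g_b(s'))\leq \kappa\max\bigl(|s-s'|^{\alpha_1}, |s-s'|^{\alpha_2}\bigr)$
on bounded domains, uniformly in $b$. Standard supremum bounds for H\"older-continuous Gaussian processes (via generic chaining or the Garsia--Rodemich--Rumsey inequality) then give $E\sup_{|s|\leq\delta}g_b(s)\leq \kappa'\delta^{\alpha^*/2}$ with $\alpha^* = \min(\alpha_1,\alpha_2)$, and rescaling back yields $E\sup_{|s-t|\leq\delta\zeta^{-1}}f_0(s)\leq \kappa'\delta^{\alpha^*/2}/b$. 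Choosing $\delta$ so that $\kappa'\delta^{\alpha^*/2}\leq \varepsilon$ gives the bound, uniformly in $t\in T$ because the variance estimate in (ii) depends only on $|s-s'|$.

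The main technical subtlety, and the reason part (iii) is the hardest, is matching the slowly varying prefactors $L_1, L_2$ to the scale $\zeta^{-1}$ and obtaining a supremum bound for $g_b$ that is uniform in $b$ with the correct $\delta^{\alpha^*/2}$ decay as $\delta\to 0$. This relies on the fact that H\"older-continuous Gaussian processes on bounded Euclidean domains admit supremum bounds on small balls proportional to the H\"older modulus without extraneous logarithmic factors in $b$.
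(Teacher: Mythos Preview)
Your proposal is correct and follows essentially the same route as the paper. The paper itself gives only a one-line sketch: parts (i) and (ii) are ``an application of conditions A2, A3, A6, and elementary calculations,'' and part (iii) is ``a direct corollary of (ii) and Dudley's entropy bound.'' Your computations for (i) and (ii) are exactly the elementary calculations intended, and your rescaling-plus-chaining argument for (iii) is precisely what Dudley's entropy bound delivers once the increment variance of the rescaled process $g_b$ is uniformly H\"older; the absence of logarithmic factors in $b$ that you flag is indeed what the entropy integral yields on a bounded Euclidean ball.
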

The proofs for (i) and (ii) are an application of conditions A2, A3, A6, and elementary calculations. (iii) is a direct corollary of (ii) and  Dudley's entropy bound (Theorem 1.1 of \cite{dudley2010sample}). We omit the detailed derivations.
We proceed to the analysis of $I_1$ and $I_2$ by considering the Type 1 and Type 2 standard deviations function (condition A4) separately.

\emph{In the main text, we only provide the proof when $\sigma(t)$ is of Type 1 in Assumption A4, that is, a constant variance. The proof of the non-constant case is similar. We present it in the Supplemental Material.}
For the constant variance case that corresponds to $\alpha_2=\infty$, the scaling factor is given by
$$\zeta=\zeta_1.$$
We aim at showing that
$I_1\leq \kappa\zeta_1^{2d}$ and  $ I_2\geq \varepsilon_0\zeta_1^d.$


\subsection{The $I_1$ term}

For some $y_0>0$ chosen to be sufficiently small (independent of $b$) and to be determined in the later analysis,
 the $I_1$ term is bounded by
\begin{equation}\label{2momentbound}
E_b\Big[\frac{1}{mes^2(A_\gamma)};M>b\Big] \leq
 y_0^{-2d}\zeta_1^{2d}+E_b\Big(\frac{1}{mes^2(A_\gamma) };mes(A_\gamma)<y_0^d\zeta_1^d, M>b\Big),
\end{equation}
To control the second term of the above inequality, we need to provide a bound on the following tail probability for $0<y< y_0$
\begin{eqnarray}\label{expQ}
  &&Q(mes(A_\gamma)<y^{d}\zeta_1^{-d}, M>b)\notag\\
  &=&\int P(mes(A_\gamma)< y^{d}\zeta_1^{-d}, M>b  |f(\tau)=\gamma+{z}/{b})h_b(\tau)
  \frac{q_{b,\tau}(\gamma+{z}/{b})}{b}d\tau dz.
\end{eqnarray}
The probability inside the integral is with respect to the original measure $P$ because, conditional on $f(\tau)$, $f(t)$ follows the original conditional distribution.
We  develop bounds for $P(mes(A_\gamma)< y^{d}\zeta_1^{-d}, M>b  |f(\tau)=\gamma+{z}/{b})$ under two situations: $z>1$ and  $0<z\leq 1$.

\subsubsection*{Situation 1: $z> 1$.}

There exists some constant $c_d > 0$ only depending on the dimension $d$ such that
the event $\{mes(A_\gamma)< y^d\zeta_1^{-d}\}$ implies the event $\{\inf_{|t-\tau|\leq c_d y\zeta_1^{-1}}f(t)\leq\gamma\}$.
Thus, we have the bound
\begin{eqnarray}
  P\Big(mes(A_\gamma)\leq y^d\zeta_1^{-d}, M>b|f(\tau)=\gamma+\frac{z}{b}\Big)&\leq& P\Big(\inf_{|t-\tau|\leq c_d y\zeta_1^{-1}}f(t)\leq\gamma|f(\tau)=\gamma+\frac{z}{b}\Big)\nonumber
\end{eqnarray}
Using the representation in \eqref{decomp}, the right-hand-side of the above probability is given by
\begin{eqnarray}
 &=& P\Big(\inf_{|t|\leq c_d y\zeta_1^{-1}}f_0(t)+\mu_\tau(t)\leq \gamma\Big)\label{eqI11}
\end{eqnarray}
For $y < y_0$, according to  Condition A2 and properties of slowly varying function, the representation \eqref{Cond} yields that
\begin{equation}\label{eqI12}
\mu_\tau(t)\geq \gamma+  \frac{1}{2b} \qquad \mbox{for $|t|\leq c_d y\zeta_1^{-1}$.}
\end{equation}
To obtain the above bound, notice that $\mu_\tau (0) = \gamma + z/b> \gamma + 1/b$. In addition, for the constant variance case, expression \eqref{Cond} can be written as
\begin{equation}\label{CondType1}
  \mu_\tau(t)=\mu(t+\tau)+r(\tau+t,\tau)(\gamma+{z}/{b}-\mu(\tau)).
\end{equation}
According to the continuity condition A5, we have that $|\mu_\tau(t) - \mu_\tau (0)| = O(b L_1(t)|t|^{\alpha_1}) +O(\sqrt{L_t(t) |t|^{\alpha_1}})$. According to the definition of $\zeta_1$ as in \eqref{zeta1}, \eqref{eqI12} can be achieved by choosing $y_0$ small.
Furthermore, by Lemma \ref{LemmaMu}(i) the conditional variance is
\begin{eqnarray*}
  C_0(t,t)\leq \lambda_1 L_1(c_d y \zeta_1^{-1})c_d^{\alpha_1} y^\alpha_1 \zeta_1^{-\alpha_1}.
\end{eqnarray*}
Using the slowly varying property of $L_1(x)$ and the fact that $L_1(\zeta_1^{-1})\zeta_1^{-\alpha_1} = b^{-2}$, we have that
$$L_1(c_d y \zeta_1^{-1}) y^\alpha_1 \zeta_1^{-\alpha_1} =b^{-2} \frac{L_1(c_d y \zeta_1^{-1})}{L_1(\zeta_1^{-1})} y^{\alpha_1} = O(y ^{\alpha_1/2}b^{-2}).$$
For the last step of the above estimate, we use Lemma \ref{LemmaSlow}(i) on page \pageref{LemmaSlow}  that the ratio $L_1(c_dy \zeta^{-1}_1)/L_1(\zeta^{-1}_1)$ varies slower than any polynomial of $y$
\begin{eqnarray}
  C_0(t,t) =O(y^{\alpha_1/2}b^{-2})\label{eqI13}.
\end{eqnarray}
By Lemma \ref{LemmaMu}(iii), $E(\sup_{|t|\leq c_dy_0\zeta_1^{-1}}b\times f_0(t))=o(1)$ as $y_0\to0.$ So we can pick $y_0$ small enough such that
\begin{equation}\label{y0}
  E(\sup_{|t|\leq c_d y_0\zeta_1^{-1}}f_0(t))\leq \frac{1}{4b}
\end{equation}
By the Borel-TIS inequality (Proposition \ref{PropBorel}), \eqref{eqI11}, \eqref{eqI12}, \eqref{eqI13}, and \eqref{y0}, there exists a positive constant $\varepsilon_0$, such that
\begin{equation*}
P(mes(A_\gamma)\leq y^d\zeta_1^{-d}, M>b|f(\tau)=\gamma+{z}/{b})\leq
P(\inf_{|t|\leq c_d y\zeta_1^{-1}} |f_0(t)| > \frac 1 {2b})
\leq \exp(-\varepsilon_0y^{-\alpha_1/2}).
\end{equation*}

\subsubsection*{Situation 2: $0<z\leq 1$.} We now proceed to the case where $0<z\leq 1$.
With $y_0$ defined to satisfy \eqref{eqI12} and \eqref{y0}, we let $c= c_d y_0$ and
define a finite  subset $\tilde{T}=\{t_1,...,t_N\}\subset T$  such that
    \begin{enumerate}
    \item For $ i\neq j,$ $i,j\in\{1,...,N\}$, $|t_i-t_j|\geq \frac{c}{2\zeta_1}$.
    \item For any $t\in T$, there exists $i\in\{1,...,N\}$, such that $|t-t_i|\leq \frac c{\zeta_1}$.
  \end{enumerate}
  Furthermore, let\label{latticeT}
  $$B_i=\{t\in T :|t-t_i|\leq c\zeta_1^{-1}\}\qquad \mbox{for $i\in\{1,2,...,N\}$. }$$
Thus, $\{B_i: i =1,...,N\}$ covers $T$, that is, $\cup_i B_i =T$.
  Note that
  \begin{equation*}
    P\left(mes(A_\gamma)\leq y^{d}\zeta_1^{-d}, M>b|f(\tau)=\gamma+\frac{z}{b}\right)\leq \sum_{i=1}^{N}P\Big(mes(A_\gamma)\leq y^{d}\zeta_1^{-d},\sup_{t\in B_i} f(t)>b|f(\tau)=\gamma+\frac{z}{b}\Big).
  \end{equation*}
	With $c_d$ as previously chosen,  each of the summands in the above display is bounded by
  \begin{eqnarray}\label{eqI16}
  &&P\left(mes(A_\gamma)\leq y^{d}\zeta_1^{-d},\sup_{t\in B_i}f(t)>b|f(\tau)=\gamma+\frac{z}{b}\right)\nonumber\\
  &\leq& P\Big(\sup_{t\in B_i,|s-t|\leq c_d y\zeta_1^{-1}}|f(t)-f(s)|>\frac{1}{b},\sup_{t\in B_i}f(t)>b|f(\tau)=\gamma+\frac{z}{b}\Big).
  \end{eqnarray}
  The above inequality is derived from the following argument. The process exceeds the level $b$ at some point in $B_i$. However, the volume of the excursion set over the level $\gamma = b - 1/b$ has  to be less than $y^d/\zeta_1^d$. This suggests that $f(t)$ must have a fast drop from the level $b$ to $b- 1/b$. Therefore, 
  the event $\{mes(A_\gamma)>y^{d}\zeta_1^{-d},\sup_{t\in B_i}f(t)>b\}$ is a subset of $\{\sup_{t\in B_i,|s-t|\leq c_d y\zeta_1^{-1}}|f(t)-f(s)|>\frac{1}{b}, \sup_{t\in B_i}f(t)>b\}$.

For the case that $0< z<1$, we select $\delta_0,\delta_1>0$ small enough, and $\lambda$ large enough and provide a bound for \eqref{eqI16} under the following four cases:

\begin{itemize}
\item[]Case 1. $0<|t_i -\tau|< y^{-\delta_0}\zeta_1^{-1}$;
\item[]Case 2. $y^{-\delta_0}\zeta_1^{-1}<|t_i -\tau|<\delta_1$;
\item[]Case 3. $|t_i -\tau|\geq\delta_1$,  $y<b^{-\lambda}$;
\item[]Case 4. $|t_i -\tau|\geq \delta_1$, $y\geq b^{-\lambda}$.
\end{itemize}
To facilitate the discussion, define
$$x_i \triangleq  \zeta_1\times |t_i -\tau|.$$

%
  \paragraph{ Case 1: $0<|t_i -\tau|< y^{-\delta_0}\zeta_1^{-1}$.}

We provide a bound for \eqref{eqI16} via the conditional representation \eqref{decomp} and the calculation in \eqref{Cond}. According to conditions A2 and A5,  for $|t-s|\leq c_d y\zeta_1^{-1}$ and $t\in B_i$, we have
  \begin{eqnarray*}
    |\mu_\tau(t)-\mu_\tau(s)|&\leq&
    \kappa_\mu \zeta_1^{-\alpha_1/2}\sqrt{L_1(y/\zeta_1)}y ^{\alpha_1/2}+\kappa_r(x_i+1)^{\beta_0} L_1((x_i+1)\zeta_1^{-1})y^{\beta_1}\zeta_1^{-\alpha_1}b
  \end{eqnarray*}
According the definition of $\zeta_1$ in \eqref{zeta1}, and Lemma \ref{LemmaSlow}(i) the above display can be bounded by
    $$|\mu_\tau(t)-\mu_\tau(s)|\leq \frac{ 2\kappa_\mu y^{\alpha_1/4}+2 \kappa_r y^{-\delta_0\beta_0+\beta_1-\varepsilon_0}}{b}.$$
We choose $\delta_0$ small such that it is further bounded by
$$|\mu_\tau(t)-\mu_\tau(s)|\leq  \kappa y ^{\varepsilon_0} b^{-1}\qquad \mbox{for some possibly different $\varepsilon_0 >0$.}$$
Furthermore, we pick $y_0>0$ small enough such that for $0<y<y_0$ and $|s-t|<c_d y\zeta_1^{-1}$
\begin{equation}\label{mu1}
  |\mu_\tau(s)-\mu_\tau(t)|\leq \frac{1}{2b}.
\end{equation}
  The above inequality provides a bound on the variation of the mean function over the set $B_i$ when $t_i$ is within $y^{-\delta_0}\zeta_1^{-1}$ distance close to $\tau$.
  The probability in \eqref{eqI16} can be bounded by
  $$\eqref{eqI16} \leq P(\sup_{t\in B_i, |t-s|\leq c_d y\zeta_1^{-1}}|f_0(t)-f_0(s)|>\frac{1}{2b}).$$
  Note that by Lemma \ref{LemmaMu}(ii), for $|s-t|<c_d y\zeta_1^{-1}$, we have that
  \begin{eqnarray}
    Var(f_0(s)-f_0(t))&\leq& \lambda_1 \frac{L_1(c_dy\zeta_1^{-1})}{L_1(\zeta_1^{-1})}y^{\alpha_1} b^{-2}=O( y^{\alpha_1/2}b^{-2}) \qquad \mbox{  for $y < y_0$.}\label{condvar}
  \end{eqnarray}
  We apply the Borel-TIS inequality (Proposition \ref{PropBorel}) to the double-indexed Gaussian field $\xi(s,t) \triangleq f_0(s)-f_0(t)$ and   obtain that there exists a positive constant $\varepsilon_0$ such that
  \begin{eqnarray}
&&     P\left(\frac{1}{mes(A_\gamma)}>y^{-d}\zeta_1^{d},\sup_{t\in B_i}f(t)>b|f(\tau)=\gamma+\frac{z}{b}\right)\nonumber\\
    &\leq & P(\sup_{t\in B_i, |t-s|\leq c_d y\zeta_1^{-1}}|f_0(t)-f_0(s)|>\frac{1}{2b})\notag\\&\leq& \exp(-\varepsilon_0y^{-\alpha_1/2})\label{tailboundzg1}
  \end{eqnarray}
  We put together all the $B_i$'s such that $|t_i-\tau| < y^{-\delta_0}\zeta_1^{-1}$ and obtain that
  \begin{eqnarray*}
    &&P\Big(\frac{1}{mes(A_\gamma)}>y^{-d}\zeta_1^{d},\sup_{|t-\tau|\leq y^{-\delta_0}\zeta_1^{-1}}f(t)>b|f(\tau)=\gamma+\frac{z}{b}\Big)\\
    &=& O(y^{-\delta_0 d}\exp(-\varepsilon_0 y^{-\alpha_1/2})) \leq\exp(-y^{-\varepsilon_0})
  \end{eqnarray*}
possibly redefining $\varepsilon_0$.

  \paragraph{Case 2: $y^{-\delta_0}\zeta_1^{-1}<|t_i - \tau|<\delta_1.$}
For this case, we implicitly require that $y^{-\delta_0}\zeta_1^{-1}<\delta_1$.
  For $t\in B_i$ and $y$ small enough, we have that
  \begin{eqnarray*}
    P(\sup_{t\in B_i,|s-t|\leq c_d y\zeta_1^{-1}}|f(t)-f(s)|>\frac{1}{b},\sup_{t\in B_i}f(t)>b|f(\tau)=\gamma+\frac{z}{b})&\leq &P(\sup_{t\in B_i}f(t)>b|f(\tau)=\gamma+\frac{z}{b})
   \end{eqnarray*}
   According to condition A2, expression \eqref{CondType1}, and property of slowly varying functions, we have the bound for $\tau + t\in B_i$
  \begin{eqnarray}\label{mu2}
    \mu_\tau(t)
    &\leq& b-\frac{\Delta_\tau}{2} \frac{L_1(x_i\zeta_1^{-1})}{L_1(\zeta_1^{-1})}x_i^{\alpha_1} b^{-1}.
  \end{eqnarray}
  From Lemma \ref{LemmaMu} and definition of $\zeta_1$, the variance of $f_0(t)$ is controlled by
  \begin{equation}\label{eqvc2}
  C_0(t,t)\leq 2\lambda_1 \frac{L_1(x_i\zeta_1^{-1})}{L_1(\zeta_1^{-1})}x_i^{\alpha_1} b^{-2}.
  \end{equation}
  According to Proposition 1 and Lemma \ref{LemmaSlow}(ii) that $\frac{L_1(x_i\zeta_1^{-1})}{L_1(\zeta_1^{-1})}x_i^{\alpha_1}>x_i^{\alpha_1/2}$ for $y^{-\delta_0}<x_i<\delta_1\zeta_1.$, we continue the calculations
   \begin{eqnarray*}
   P(\sup_{t\in B_i}f(t)>b|f(\tau)=\gamma+\frac{z}{b})
    &\leq&P(\sup_{t+\tau\in B_i}f_0(t)>\frac{\Delta_\tau}{2} \frac{L_1(x_i\zeta_1^{-1})}{L_1(\zeta_1^{-1})}x_i^{\alpha_1} b^{-1})\\
    &\leq& \exp\big(-\frac{\Delta_\tau^2}{8\lambda_1}\frac{L_1(x_i\zeta_1^{-1})}{L_1(\zeta_1^{-1})}x_i^{\alpha_1}\big)\\
    &\leq& \exp(-\frac{\Delta_\tau^2}{8\lambda_1}x_i^{\alpha_1/2}).
  \end{eqnarray*}
  Putting together all the $B_i$'s such that $y^{-\delta_0}<x_i<\delta_1\zeta_1$, we have that
  \begin{eqnarray*}
    &&P(\frac{1}{mes(A_\gamma)}>y^{-d}\zeta_1^{d},\sup_{y^{-\delta_0}\zeta_1^{-1}<|t-\tau|<\delta_1}f(t)>b|f(\tau)=\gamma+\frac{z}{b})\\
    &\leq&  \sum_{k=0}^{\infty}\kappa (y^{-\delta_0} + k )^{d-1}\exp[-\frac{\Delta_\tau^2}{8\lambda_1}(y^{-\delta_0} + k )^{\alpha_1/2}]\\
    &\leq& \exp(-y^{-\varepsilon_0})
  \end{eqnarray*}
  for some constant $\varepsilon_0>0$.

  \paragraph{Case 3: $|t_i -\tau|\geq\delta_1$ and $y<b^{-\lambda}$.}

 Since $C(s,t)$ is uniformly H\"older continuous, we can always choose $\lambda$ large such that   for $|s-t|\leq c_d y\zeta_1^{-1}\leq  c_d b^{-\lambda}\zeta_1^{-1}$,
  \begin{eqnarray}\label{mu3}
    |\mu_\tau(t)-\mu_\tau(s)|&\leq& \frac{1}{4b}.
  \end{eqnarray}
  By Lemma \ref{LemmaMu}(ii) and Lemma \ref{LemmaSlow}(i),  for $|s-t|\leq c_d y\zeta_1^{-1}$,  the conditional variance $Var(f_0(s)-f_0(t))$ is bounded by
  \begin{eqnarray*}
  Var(f_0(s)-f_0(t))&\leq &\lambda_1 \frac{L_1(c_dy\zeta_1^{-1})}{L_1(\zeta_1^{-1})}y^{\alpha_1} b^{-2}=O(y^{\alpha_1/2}b^{-2}).
  \end{eqnarray*}
  Thus, there exist a constant $\varepsilon_0>0$ such that
  \begin{eqnarray*}
    &&P\Big(\sup_{t\in B_i,|s-t|\leq c_d y\zeta_1^{-1}}|f(t)-f(s)|>\frac{1}{b},\sup_{t\in B_i}f(t)>b|f(\tau)=\gamma+\frac{z}{b}\Big)\\
    &\leq& P\Big(\sup_{ t\in B_i,|s-t|\leq c_d y\zeta_1^{-1}}|f_0(t)-f_0(s)|>\frac{1}{2b}\Big)\\
    &\leq& 2\exp(-\varepsilon_0 y^{-\alpha_1})
  \end{eqnarray*}
  Note that $\zeta_1\ll b^{4/\alpha_1}$, so for $y<b^{-\lambda}$, we have
  \begin{eqnarray}
    &&P\Big(\frac{1}{mes(A_\gamma)}>y^{-d}\zeta_1^{d},\sup_{|t-\tau|>\delta_1}f(t)>b|f(\tau)=\gamma+\frac{z}{b}\Big)\notag\\
    &\leq&O(\zeta_1^d )\sup_{i}P\Big(\sup_{t\in B_i,|s-t|\leq c_d y\zeta_1^{-1}}|f(t)-f(s)|>\frac{1}{b},\sup_{t\in B_i}f(t)>b|f(\tau)=\gamma+\frac{z}{b}\Big)\nonumber\\
    &\leq&O(b^{4d/\alpha_1} )\exp(-\varepsilon_0 y^{-\alpha_1/2})\nonumber\\
    &\leq&O(y^{-\frac{4d}{\alpha_1\lambda}})\exp(-\varepsilon_0 y^{-\alpha_1/2})\nonumber\\
    &\leq&\exp(-y^{-\varepsilon_0})\label{tailthm1c2}
  \end{eqnarray}
  for some possibly different constant $\varepsilon_0$.

  \paragraph{Case 4: $|t_i - \tau|\geq \delta_1$ and $y\geq b^{-\lambda}$.}\label{case4}
  Note that condition A3 implies that for any $\delta_1>0$, there exists $\varepsilon>0$ such that for $|s-t|>\delta_1$, $r(s,t)<1-\varepsilon$, and thus according to expression \eqref{CondType1}, there exists $\varepsilon>0$ such that $\mu_\tau(t)\leq(1-\varepsilon)b$. According to  Proposition \ref{PropBorel} (the Borel-TIS inequality), we have that for $b$ large enough,
  \begin{eqnarray*}
    &&P(\frac{1}{mes(A_\gamma)}>y^{-d}\zeta_1^{d},\sup_{|t-\tau|\geq\delta_1}f(t)>b|f(\tau)=\gamma+\frac{z}{b})\\
    &\leq& P(\sup_{|t|\geq\delta_1}f_0(t)+\mu_\tau(t)>b)\\
    &\leq& P(\sup_{|t|\geq\delta_1}f_0(t)>\varepsilon b)\\
    &\leq& \exp(-\frac{\varepsilon^2 b^2}{2\sigma_T^2})\\
    &\leq& \exp(-y^{-\varepsilon_0})
  \end{eqnarray*}
  for some constant $\varepsilon_0>0.$

  Combining Cases 1-4,   for some constants $\varepsilon_0$ and $y_0$ chosen to be small, we have that for $y \in (0,y_0]$
  \begin{equation}\label{tailbound}
    P\Big(\frac{1}{mes(A_\gamma)}>y^{-d}\zeta_1^d,M>b\Big| f(\tau)=\gamma+\frac{z}{b}\Big)
    \leq  \exp(-y^{-\varepsilon_0})
  \end{equation}
 Together with  \eqref{expQ}, we have
  \begin{equation}\label{mesub}
    Q\Big(\frac{1}{mes(A_\gamma)}>y^{-d}\zeta_1^d,M>b\Big)\leq \exp(-y^{-\varepsilon_0}).
  \end{equation}
Thus, according to \eqref{2momentbound}, for some $\kappa>0$, we have
\begin{eqnarray}\label{I1}
  E^Q\Big[\frac{1}{mes(A_\gamma)^2};M>b\Big]  \leq(\kappa + y_0^{-2d})\zeta_1^{2d}.
\end{eqnarray}

\subsection{The $I_2$ term}

To provide a lower bound of  $$I_2=E^Q\Big[\frac{1}{mes(A_\gamma)};M>b\Big],$$ we basically need to prove that $mes(A_\gamma)$ cannot be always very large. Thus, it is sufficient to show that $f(t)$ drop below $\gamma$ when $t$ is reasonably far away from $\tau$.
The next lemma shows that for any $\delta >0$, the process $f(t)$ drops below $\gamma$ almost all the time when $|t-\tau| > \delta$.

\begin{lemma}\label{LemmaI21}
Under conditions A1-6, for standard deviation of Type 1,  we have that
  \begin{equation}\label{eqlemma1}
    Q(\sup_{|t-\tau|>\delta}f(t)\geq\gamma)\leq e^{-\varepsilon_0 b^2}\qquad \mbox{  for some $\varepsilon_0>0.$}
  \end{equation}
\end{lemma}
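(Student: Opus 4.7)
\textbf{Proof proposal for Lemma \ref{LemmaI21}.} The plan is to condition on $(\tau, f(\tau))$ and integrate under $Q_b$, splitting the integral according to the size of $z \triangleq b(f(\tau)-\gamma)$. For each fixed $(\tau,z)$ I will use the decomposition \eqref{decomp} and apply the Borel--TIS inequality to the centered conditional field $f_0$.

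The key structural input is Condition A3 together with the compactness of $T$ and continuity of $r$: there exists $\eta=\eta(\delta)>0$ such that $r(\tau+t,\tau)\le 1-\eta$ for all $\tau,\tau+t\in T$ with $|t|\ge\delta$. Inserting this into \eqref{CondType1} (using $\sigma\equiv 1$ in the Type~1 case and $|\mu(\cdot)|\le\mu_T$) yields
\begin{equation*}
\mu_\tau(t) \;\le\; (1-\eta)\gamma + 2\mu_T + z/b, \qquad |t|\ge \delta.
\end{equation*}
Hence, so long as $z\le \eta b^2/4$ and $b$ is large enough to absorb the $2\mu_T$ term, one has $\gamma-\mu_\tau(t)\ge \eta b/2$ uniformly in $\tau$ and in $t$ with $|t|\ge\delta$.

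Next, $f_0$ is a centered Gaussian field with pointwise variance bounded by $\sigma_T^2(1-r^2)\le \sigma_T^2$, and $E[\sup_t f_0(t)]$ is bounded uniformly in $\tau$ and $b$ (A2 plus Dudley's entropy bound, as in Lemma \ref{LemmaMu}(iii)). Proposition \ref{PropBorel} then gives, for some $\varepsilon_1>0$ and all $z\in(0,\eta b^2/4]$,
\begin{equation*}
P\!\left(\sup_{|t-\tau|>\delta} f(t)\ge \gamma \;\Big|\; f(\tau)=\gamma+z/b\right)
\;\le\; P\!\left(\sup_{t} f_0(t)\ge \eta b/2 - o(b)\right) \;\le\; e^{-\varepsilon_1 b^2}.
\end{equation*}
For the remaining piece, a standard Mills-ratio calculation on $\varphi_\tau$ shows that under $Q_b$ the conditional tail of $z$ given $\tau$ is dominated by $\exp\!\bigl(-(\gamma-\mu(\tau))z/b - z^2/(2b^2)\bigr)$, so that $Q_b(z>\eta b^2/4)\le e^{-\varepsilon_2 b^2}$ for some $\varepsilon_2>0$. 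Adding the two contributions and taking $\varepsilon_0=\tfrac12\min(\varepsilon_1,\varepsilon_2)$ proves the lemma.

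The main obstacle I expect is the uniformity of $E[\sup_t f_0(t)]$ in $\tau$ and $b$: the covariance structure of $f_0$ depends on $\tau$, so one must verify that the Dudley integral bound derived from A2 does not degrade along the family. Everything else is a routine Borel--TIS and Mills-ratio exercise, provided one first fixes $\eta$ by the A3-compactness argument.
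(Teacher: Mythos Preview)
Your proposal is correct and follows essentially the same route as the paper: condition on $(\tau,z)$, use the uniform correlation gap from A3 to bound $\mu_\tau(t)$ away from $\gamma$ when $z$ is not too large, apply Borel--TIS to $f_0$, and control the complementary event $\{z>\eta b^2/4\}$ by the (asymptotically exponential) tail of $z$ under $Q_b$. One remark on your ``main obstacle'': the law of $f_0$ depends only on $\tau$ and \emph{not} on $b$ or $z$ (the conditioning value enters only through the deterministic shift $\mu_\tau$), so uniformity in $b$ is automatic and uniformity in $\tau$ follows from compactness and A2 via Dudley, exactly as you indicate.
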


%
\begin{lemma}\label{LemmaI22}
  Under conditions A1-6, there exists $\delta$ small and $\kappa$ large  (independent of $b$), such that for $x> \kappa$ we have
  \begin{equation}\label{eqlemma2}
    Q\Big(\sup_{x\zeta^{-1}\leq |t-\tau|\leq \delta}f(t)\geq\gamma\Big)< e^{-\varepsilon_0 x^{\alpha_1/4}}.
  \end{equation}
\end{lemma}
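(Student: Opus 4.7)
The plan is to condition on the realization of $(\tau,z)$ with $f(\tau)=\gamma+z/b$, and use the decomposition $f(\tau+t)=\mu_\tau(t)+f_0(t)$ of \eqref{decomp}. Under $Q_b$, $\tau$ has density $h_b$ and by the Mills ratio $z$ has conditional density $q_{b,\tau}(\gamma+z/b)/b$ that is bounded uniformly in $\tau$ by $\kappa e^{-z/\sigma_T^2}$. So it suffices to bound
$$P\Big(\sup_{x\zeta^{-1}\le|t|\le\delta}\bigl(\mu_\tau(t)+f_0(t)\bigr)\ge\gamma\ \Big|\ f(\tau)=\gamma+z/b\Big)$$
with a rate that is integrable against this exponential tail, and then integrate against $h_b(\tau)\,d\tau$ (which has total mass one).

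First I would quantify the drop in the conditional mean. From the Type~1 representation \eqref{CondType1}, condition A2, A5 and the slowly-varying Lemma \ref{LemmaSlow}(ii), one can derive that for $x\zeta^{-1}\le|t|\le\delta$,
$$\gamma-\mu_\tau(t)\ \ge\ \frac{1}{b}\Big[\tfrac12\Delta_\tau\,\frac{L_1(|t|)|t|^{\alpha_1}}{L_1(\zeta^{-1})\zeta^{-\alpha_1}}\ -\ z\Big],$$
with $\inf_\tau\Delta_\tau>0$ by the continuity of $\Delta_\tau$ on the compact $T$. By definition of $\zeta=\zeta_1$ and slow variation, the ratio satisfies $L_1(|t|)|t|^{\alpha_1}/(L_1(\zeta^{-1})\zeta^{-\alpha_1})\ge (\zeta|t|)^{\alpha_1/2}$ once $\zeta|t|\ge\kappa$. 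Next I would dyadically decompose the annulus: let $x_j=2^jx$ for $j=0,1,\ldots,J$ with $J=\lceil\log_2(\delta\zeta/x)\rceil$, and cover the shell $S_j=\{x_j\zeta^{-1}\le|s|\le x_{j+1}\zeta^{-1}\}$ by $O(x_j^{d-1})$ balls of radius $\zeta^{-1}$. On $S_j$, the mean drop is at least $c_1x_j^{\alpha_1/2}/b$; by Lemma \ref{LemmaMu}(i)--(ii) together with Lemma \ref{LemmaSlow}(i), the pointwise and incremental variances of $f_0$ on $S_j$ are of order $x_{j+1}^{\alpha_1}/b^2$ times a slowly-varying ratio, and by Lemma \ref{LemmaMu}(iii) (Dudley's entropy bound) the supremum expectation is at most $\kappa\,x_{j+1}^{\alpha_1/2}/b$. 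Provided $z\le c_1x_j^{\alpha_1/2}/4$, an application of the Borel--TIS inequality (Proposition \ref{PropBorel}) on each ball yields
$$P\Big(\sup_{s\in S_j}f_0(s)\ge\gamma-\mu_\tau(s)\ \Big|\ f(\tau)=\gamma+z/b\Big)\ \le\ \exp(-\varepsilon_0\,x_j^{\alpha_1/2}).$$
Summing the $O(x_j^{d-1})$ ball contributions over $j=0,\ldots,J$ collapses to the $j=0$ term, giving the conditional bound $\kappa(x^{d-1}+1)\exp(-\varepsilon_0'x^{\alpha_1/2})\le\exp(-\varepsilon_0''x^{\alpha_1/4})$ once $x\ge\kappa$. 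The complementary regime $z>c_1 x^{\alpha_1/2}/4$ is handled trivially by the exponential tail of $z$, contributing at most $\exp(-c\,x^{\alpha_1/2})$. Integrating over $(\tau,z)$ yields the claimed estimate.

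The main obstacle will be the careful bookkeeping of the slowly varying function ratios in both the mean drop and the variance bounds so that the two are comparable after the normalization by $\zeta$, and the verification that the shell decomposition sums to a clean exponent. The loosening from $x^{\alpha_1/2}$ (which one obtains pointwise by Borel--TIS) to $x^{\alpha_1/4}$ in the statement provides precisely the slack needed to absorb the polynomial factor $x^{d-1}$ from shell counting and the sub-polynomial losses from slow variation, and to tolerate the additional $z$-contribution in the mean drop after integrating against the exponential density of $z$ under $Q_b$.
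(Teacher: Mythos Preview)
Your overall architecture is the same as the paper's: condition on $(\tau,z)$, dispose of large $z$ via its exponential tail, cover the annulus by balls of radius $\Theta(\zeta^{-1})$, bound the conditional mean drop from \eqref{CondType1} and the conditional variance from Lemma~\ref{LemmaMu}(i), apply Borel--TIS on each ball, and sum. The paper uses unit shells $k\le\zeta|t|\le k+1$ rather than your dyadic shells, which is cosmetic; do note that a dyadic shell $x_j\le\zeta|s|\le 2x_j$ needs $O(x_j^{d})$ balls of radius $\zeta^{-1}$, not $O(x_j^{d-1})$, though the exponential absorbs this.

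Where your sketch as written would actually fail is the slowly-varying bookkeeping you flag. Both the mean drop and the pointwise variance carry the \emph{same} factor $R_j:=\tfrac{L_1(x_j\zeta^{-1})}{L_1(\zeta^{-1})}x_j^{\alpha_1}$: one has $\gamma-\mu_\tau(t)\ge \tfrac{\Delta_\tau}{2}R_j/b$ and $C_0(t,t)\le 2\lambda_1 R_j/b^2$. If you first invoke Lemma~\ref{LemmaSlow}(ii) to replace the mean drop by $c_1x_j^{\alpha_1/2}/b$ while leaving the variance as $\Theta(R_j/b^2)$, the Borel--TIS exponent becomes $\Theta(x_j^{\alpha_1}/R_j)$, and Potter's bounds only give $R_j\le Cx_j^{\epsilon}$, so this need not grow. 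The paper forms the exponent first, obtaining $\Theta(R_j)$, and only then applies $R_j\ge x_j^{\alpha_1/2}$. A related slip: on a ball of fixed radius $\zeta^{-1}$ the expected supremum of $f_0$ is $O(1/b)$ uniformly in the ball's center (since $Ef_0\equiv 0$ and the increment variance on such a ball is $O(b^{-2})$ by Lemma~\ref{LemmaMu}(ii)), not $\kappa x_{j+1}^{\alpha_1/2}/b$; with your weaker bound the numerator $c_1x_j^{\alpha_1/2}-\kappa x_{j+1}^{\alpha_1/2}$ has no reason to be positive.
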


The proof of these two Lemmas are provided in the Supplemental Material.
We proceed to developing a lower bound for $I_2$. First, notice that the event $\{M>b\}$ is a regular event under $Q$, that is,
\begin{equation*}
  Q(M>b)\geq Q(f(\tau)>b)>\frac{1}{2}e^{-1/\sigma^2(\tau)}.
\end{equation*}
Also, according to Lemma \ref{LemmaI21} and \ref{LemmaI22}, we choose $x$ such that
\begin{equation*}
  Q(\sup_{ |t-\tau|>x\zeta_1^{-1}}f(t)\geq\gamma)<\frac{1}{2}e^{-2/\sigma^2(\tau)}
\end{equation*}
Let $\omega_d$ be the volume of the $d$-dimensional unit ball.
Thus, we have
\begin{eqnarray}
  I_2
  &\geq& E^Q(\frac{1}{mes(A_\gamma)}; M>b, mes(A_\gamma)< \omega_d x^{d}\zeta_1^{-d})\notag\\
  &\geq& \omega_d^{-1}x^{-d}\zeta_1^{d} Q(mes(A_\gamma)< \omega_d x^{d}\zeta_1^{-d}, M>b)\notag\\
  &\geq&\omega_d^{-1}x^{-d}\zeta_1^d\big[Q(M>b)-Q(mes(A_\gamma)\geq \omega_d x^{d}\zeta_1^{-d})\big]\notag\\
  &\geq&\omega_d^{-1}x^{-d}\zeta_1^d\big[Q(M>b)-Q(\sup_{ |t-\tau|>x\zeta_1^{-1}}f(t)\geq\gamma)\big]\notag\\
  &\geq&\omega_d^{-1}x^{-d}\zeta_1^d(e^{-1/\sigma^2(\tau)}-e^{-2/\sigma^2(\tau)})\label{e:39}
\end{eqnarray}
Summarizing the results in \eqref{I1} and \eqref{e:39}, we have that
$$E_b (Z_b ^2)  \leq \kappa \zeta_1^{2d} \Big (\int P(f(t) > \gamma)dt \Big)^2,\quad P(M>b ) > \varepsilon_0 \zeta_1 ^{d}\int P(f(t) > \gamma)dt $$
and therefore
$$\sup_{b}\frac{E^Q Z_b ^2 }{ P^2(M>b)}< \infty.$$

\section{Proof of Theorem \ref{ThmDis}}\label{SecDis}

Let $T_m = \{t_1,...,t_m\}$ be generated in the step 3 of Algorithm 2.
We start the analysis with the following decomposition%
\begin{eqnarray*}
\hat{Z}_{b}-Z_{b} &=&\left[ \frac{I(\sup f(t)>b)}{mes(A_{\gamma })}-\frac{%
I(\max_{i=1}^{m}f(t_{i})>b)}{\widehat{mes}(A_{\gamma })}\right]
E(mes(A_{\gamma })) \\
&=&E(mes(A_{\gamma })) \\
&&~~\times \Big[ \frac{I(\sup f(t)>b)}{mes(A_{\gamma })}-\frac{%
I(\max_{i=1}^{m}f(t_{i})>b)}{mes(A_{\gamma })} \\
&&~~~~~~~~+\frac{I(\max_{i=1}^{m}f(t_{i})>b)}{mes(A_{\gamma })}-\frac{%
I(\max_{i=1}^{m}f(t_{i})>b)}{\widehat{mes}(A_{\gamma })}\Big],
\end{eqnarray*}
where $\widehat{mes} (A_\gamma)$ is defined as in \eqref{vol}.
According to the result in Theorem \ref{ThmCont}, we only need to show that $|E^Q(\hat Z_b - Z_b)|\leq \varepsilon P(M>b)$ and $Var (\hat Z_b - Z_b) = O(P^2(M>b))$.
We define notations%
\begin{eqnarray*}
J_{1} &=&\frac{I(\sup f(t)>b)}{mes(A_{\gamma })}-\frac{I(%
\max_{i=1}^{m}f(t_{i})>b)}{mes(A_{\gamma })} \\
J_{2} &=&\frac{I(\max_{i=1}^{m}f(t_{i})>b)}{mes(A_{\gamma })}-\frac{%
I(\max_{i=1}^{m}f(t_{i})>b)}{\widehat{mes}(A_{\gamma })}.
\end{eqnarray*}%
We control each of the two terms respectively.

\subsection{The $J_{1}$ term}


Note that $J_1$ is non-negative and
\begin{equation}
E_b(J_{1})=E_b\left( \frac{1}{mes(A_{\gamma })};M>b;\max_{i=1}^{m}f(t_{i})\leq b\right) .  \label{J1}
\end{equation}%
Note that the proof of Theorem \ref{ThmCont}, in particular \eqref{mesub} and \eqref{mesub1}, shows that
$
\frac{I(M>b)}{\zeta ^{d}mes(A_{\gamma })}
$
is uniformly integrable in the parameter $b$ where
$$\zeta = \max(\zeta_1,\zeta_2).$$
More precisely, for any $\delta $ small enough, we have that
\begin{equation}\label{UNI}
\sup_{Q(B)\leq \delta} E_b\left( \frac{1}{mes(A_{\gamma })};M>b;B\right)\leq (-\log \delta)^{1/\varepsilon_0} \delta \zeta^{d}.
\end{equation}
Therefore, it is sufficient to
focus on and derive a bound for the probaiblity
\begin{equation*}
Q(M>b;\max_{i=1}^{m}f(t_{i})\leq b).
\end{equation*}%
Let $x$ be large and $\delta'$ be small such that%
\begin{eqnarray}\label{UI}
&&Q\Big( M>b;\max_{i=1}^{m}f(t_{i})\leq b\Big)  \\
&\leq&  Q\Big( \sup_{x\zeta^{-1}<|t-\tau |<\delta'
}f(t)>b;\max_{i=1}^{m}f(t_{i})\leq b\Big) \notag\\
&+&Q\Big( \sup_{|t-\tau
|<x\zeta ^{-1}}f(t)>b,\sup_{|t-\tau|>x\zeta^{-1}}f(t)\leq b;\max_{i=1}^{m}f(t_{i})\leq b\Big)\notag\\
&+&Q\Big( \sup_{|t-\tau |\geq \delta'}f(t)>b;\max_{i=1}^{m}f(t_{i})\leq b\Big) .\notag
\end{eqnarray}%
We will provide a specific  choice of $m$ such that
$$Q\left( \sup f(t)>b;\max_{i=1}^{m}f(t_{i})\leq b\right)\leq \delta \triangleq  \varepsilon^{1+ \varepsilon_0},$$
where $\varepsilon$ is relative bias preset in the statement of the theorem.
We consider each of the three terms in \eqref{UI}.

\subsubsection{The first term in \eqref{UI}.}

We  choose
$$x = \min\{(-\log \delta)^{4/\alpha},\delta' \zeta\},\qquad \mbox{where $\alpha = \min \{\alpha_1,\alpha_2\}$.}$$
According to Lemma \ref{LemmaI22} and \eqref{L3}, the first term in \eqref{UI} is bounded by
\begin{equation*}
Q\Big( \sup_{x\zeta^{-1}<|t-\tau |<\delta'}f(t)>b;\max_{i=1}^{m}f(t_{i})\leq
b\Big) \leq Q\Big(\sup_{x\zeta^{-1}<|t-\tau |<\delta'}f(t)>b\Big)\leq \delta .
\end{equation*}
Notationally, we define that $\sup_{t\in \emptyset} f(t) =  - \infty$. Thus, when $x = \delta' \zeta$, the above probability is zero.

\subsubsection{The second term in \eqref{UI}.}
Simple derivations yield that
\begin{eqnarray}\label{2nd}
&&Q\Big( \sup_{|t-\tau |<x\zeta ^{-1}}f(t)>b,\sup_{|t-\tau|>x\zeta^{-1}}f(t)\leq b,\max_{i=1}^{m}f(t_{i})\leq
b\Big)\notag\\
&=&E_b \Big[Q(\max_{i=1}^{m}f(t_{i})\leq
b|f); \sup_{|t-\tau |<x\zeta ^{-1}}f(t)>b,\sup_{|t-\tau|>x\zeta^{-1}}f(t)\leq b\Big ]\notag\\
&\leq &E_b\Big[ (1-\beta (A_{b}))^{m};\sup_{|t-\tau |<x\zeta ^{-1}}f(t)>b%
\Big]
\end{eqnarray}%
where $$\beta (A_{b})=\zeta^d\times mes(A_{b}\cap B(\tau, x/\zeta))\times \inf_{|t|\leq x}k(t)$$ is the lower bound of the probability  that $Q(t_i \in A_b|f)$ and
$B(\tau, x)$ is the ball centered around $\tau$ with radius $x$. In what follows, we need
to show that $mes(A_{b})$ cannot be too small on the set $\{\sup_{|t-\tau
|<x\zeta ^{-1}}f(t)>b\}$ and therefore $\beta(A_b)$ cannot be too small.
We write
\begin{equation*}
\mathcal E_{1}=\{ \sup_{|t-\tau |<x\zeta ^{-1}}f(t)>b\}
\end{equation*}%
and write (\ref{2nd}) as
\begin{equation*}
  E_b[(1-\beta(A_b))^m; \mathcal E_1]=E_b[(1-\beta(A_b))^m; \mathcal E_1, D_{\lambda_3, \delta_1}^c]+E_b[(1-\beta(A_b))^m; \mathcal E_1,D_{\lambda_3, \delta_1} ]
\end{equation*}
where, for some $\lambda_3$ and $\delta_1$ positive, we define
\begin{equation*}
D_{\lambda _{3},\delta _{1}}=\{\sup_{\substack{ _{\substack{ |s-t|\leq
\lambda _{3}\zeta ^{-1}}} \\ s,t\in B(\tau ,x\zeta ^{-1})}}%
|f(s)-f(t)|\leq\delta _{1}b^{-1}\}.
\end{equation*}%
For some $\varepsilon_0$ small, we choose $\delta_1=\varepsilon_0\delta$ and
$$\lambda _{3} = \varepsilon_0 \delta_1 ^{2/{\alpha}+1/{\beta_1+\varepsilon_0}}.$$
We apply the Borel-TIS lemma to the double-indexed process $\xi(s,t) = f(s) - f(t)$ whose variance is bounded by Lemma \ref{LemmaMu} (ii).
Thus, we obtain the following bound
$$E_b\Big[(1-\beta(A_b))^m; \mathcal E_1, D_{\lambda_3, \delta_1}^c\Big] \leq  Q(D_{\lambda _{3},\delta _{1}}^{c})\leq \delta . $$ Therefore, (\ref{2nd}) is bounded by
\begin{equation*}
\delta +E_b\Big[(1-\beta(A_b))^m; \mathcal E_1,D_{\lambda_3, \delta_1} \Big].
\end{equation*}%
We further split the expectation
\begin{eqnarray*}
E_b\Big[(1-\beta(A_b))^m; \mathcal E_1,D_{\lambda_3, \delta_1} \Big] &\leq& E_b\Big[(1-\beta (A_{b}))^{m};D_{\lambda _{3},\delta
_{1}};\sup_{|t-\tau |<x\zeta ^{-1}}f(t)>b+\delta _{1}b^{-1},\mathcal E_1\Big]\\
&& +{Q}\Big[D_{\lambda _{3},\delta
_{1}};b<\sup_{|t-\tau |<x\zeta ^{-1}}f(t)\leq b+\delta _{1}b^{-1},\mathcal E_1\Big].
\end{eqnarray*}
We proceed to  providing a bound of the second term by considering the standardized process $g(t)=b(f(\tau+t/\zeta)-b)$ conditional on $f(\tau)=\gamma+\frac{z}{b}$. $g(t)$ can be written as
\begin{equation}\label{gg}
  g(t)=\frac{C(t/\zeta+\tau)}{C(\tau,\tau)}z+l(t),
\end{equation}
where $l(t)$ is a random field whose distribution is independent of $z$.  So we have
\begin{eqnarray*}
  Q(b<\sup _{|t-\tau|< x\zeta^{-1}} f(t)<b+\delta_1b^{-1})
  &=& Q\Big(\sup_{|t|\leq x} \frac{C(t/\zeta+\tau)}{C(\tau,\tau)} z +l(t)\in (0,\delta_1)\Big)\\
  &=& O(\delta_1)
\end{eqnarray*}
The last equality holds because $z$ has a  density  bounded everywhere (asymptotically exponential), and $\frac{1}{2}<\frac{C(t/\zeta+\tau)}{C(\tau,\tau)}<\frac{\sigma_T^2}{\sigma^2(\tau)}$. Given a realization of $l(t)$ , $\sup_{|t|\leq x} \frac{C(t/\zeta+\tau)}{C(\tau,\tau)} z + l(t) \in (0,\delta_1)$ implies that $z$ has to fall in an interval with length less than $2\delta_1$.
Thus, if we choose $\varepsilon_0$ small and $\delta_1 =\varepsilon_0\delta$, then
\begin{equation*}
Q(b<\sup_{|t-\tau|<x\zeta^{-1}}f(t)<b+\delta_1\zeta^{-1})<\delta .
\end{equation*}%
Therefore, we have that (\ref{2nd}) is bounded by
\begin{equation*}
2\delta +E^{Q}[(1-\beta (A_{b}))^{m};D_{\lambda _{3},\delta
_{1}};\sup_{|t-\tau |<x\zeta ^{-1}}f(t)>b+\delta _{1}b^{-1},\mathcal E_1].
\end{equation*}%
Note that, on the set $D_{\lambda _{3},\delta _{1}}$, $mes(A_{b}\cap B(\tau, x\zeta^{-1}))$ is controlled by the overshoot $\sup_{|t-\tau |<x\zeta^{-1}}f(t)-b$, that is, if $\sup_{|t-\tau|<x\zeta^{-1}} f(t) > b+ \delta_1/b$, then $mes(A_b \cap B(\tau, x\zeta^{-1}))\geq \varepsilon_0\lambda_3^d \zeta^{-d}$.
In addition, the density $k_{\tau,\zeta}(t)$ is bounded from below by $x^{-d-\varepsilon_1}$ for $t\in B(\tau, x\zeta^{-1})$.
Thus, the probability $\beta(A_b)$ has a lower bound
$$\beta(A_b) \geq \varepsilon_0 x^{-d-\varepsilon_1} \lambda_3 ^d \geq  \varepsilon_0 \delta ^{2d/\alpha+d/\beta_1+2\varepsilon_0}.$$
The last step of the above inequality follows from that $x = \min\{(-\log \delta)^{4/\alpha},\delta' \zeta\}$.
Thus, we have that (\ref{2nd}) is bounded by%
\begin{equation*}
2\delta +(1-\varepsilon_0 \delta ^{2d/\alpha+d/\beta_1+2\varepsilon_0})^{m}.
\end{equation*}%
We choose for some $\varepsilon_0 $ and $\kappa>0$
 $$m = \kappa \delta ^{-2d/\alpha-d/\beta_1-2\varepsilon_0} $$
  and therefore
\begin{equation*}
Q\Big( \sup_{|t-\tau
|<x\zeta ^{-1}}f(t)>b,\sup_{|t-\tau|>x\zeta^{-1}}f(t)\leq b;\max_{i=1}^{m}f(t_{i})\leq b\Big)\leq 4\delta .
\end{equation*}%

\subsubsection{The last term in \eqref{UI}.}
	According to the result in Lemma \ref{LemmaI21} (and the corresponding result as in \eqref{L2} for the non-constant variance case presented in the Supplemental Material), we can choose $\varepsilon_0$ and $\delta'$ such that
$$Q(\sup_{|t-\tau|\geq \delta'} f(t) \geq \gamma )\leq e^{-\varepsilon_0 b^2}.$$
There are two cases: $\delta > e ^{- \varepsilon_0 b^2}$ and $\delta \leq e ^{- \varepsilon_0 b^2}$.

\paragraph{Case 1: $\delta > e ^{- \varepsilon_0 b^2}$.} In this case, The last term in \eqref{UI} is bounded trivially by
$$Q\Big( \sup_{|t-\tau |\geq \delta'}f(t)>b;\max_{i=1}^{m}f(t_{i})\leq b\Big)\leq Q(\sup_{|t-\tau|\geq \delta'} f(t) \geq \gamma ) \leq  \delta.$$

\paragraph{Case 2: $\delta  < e ^{- \varepsilon_0 b^2}$.} We need a similar analysis to that of the second term.
We now split the probability for $\delta_2=\delta^{1+\varepsilon_0}$
\begin{eqnarray*}\label{split}
&&Q\Big( \sup_{|t-\tau |\geq \delta'}f(t)>b;\max_{i=1}^{m}f(t_{i})\leq b\Big) \notag\\
&&~~~\leq Q\Big( \sup_{|t-\tau |\geq \delta'}f(t)\in [b,\delta_2 b^{-\lambda}]\Big) + Q\Big( \sup_{|t-\tau |\geq \delta'}f(t)>b + \delta_2 b^{-\lambda};\max_{i=1}^{m}f(t_{i})\leq b\Big) .
\end{eqnarray*}
%
%
%
%
%
%
%
We now consider the first term split the set $\{t:|t-\tau|>\delta'\}$ into two parts. Define the set $F=\{t:\frac{C(t,\tau)}{C(\tau,\tau)}>\frac{1}{(-\log\delta_2)^2}\}$,
We start with  the small overshoot probability on the set $F$
$$Q\Big(b< \sup_{|t-\tau|>\delta',t\in F} f(t)\leq b+ \delta_2/b\Big).$$
Using the representation \eqref{gg},  applying similar analysis as that of the second term, we have that
\begin{eqnarray}
  &&Q\Big(b<\sup_{|t-\tau|\geq\delta', t\in F}f(t)<b+\delta_2b^{-1}\Big)\notag\\
  &&~~~\leq Q\Big(\sup_{|\frac{t}{\zeta}|>\delta', \frac{t}{\zeta}+\tau\in F} \frac{C(t/\zeta+\tau)}{C(\tau,\tau)} z+l(t)\in (0,\delta_2)\Big)=O((-\log\delta_2)^2\delta_2)\leq \delta.\label{overshoot}
\end{eqnarray}
The last two steps are based on the fact that $z$ is a random variable independent of $l(t)$ and has bounded density. Thus, the above probability is bounded by
$$\sup_x P(x < z < x+ (\log \delta_2)^2 \delta_2)= O((\log \delta_2)^2 \delta_2).$$
We will return to this estimate soon.

We now consider $t$ in $F^c$. For some $\kappa_0$ large, we have that $Q(z> -\kappa_0 \log \delta_2) < \delta_2$.  Thus, we only consider $z<-\kappa_0\log\delta_2$.
Conditional on $f(\tau) = \gamma + z/b$, conditional mean is $\sup_{t\in F^c}\mu_\tau(t-\tau)  \leq C>0.$ In addition, the conditional variance of $f(t)$ on the set $F^c$ is almost $\sigma^2(t)$. Thus, we can apply classic results on the density estimation of the $\sup f(t)$ (c.f.~Theorem 2 of \cite{TsirelsonMaxDens}) and have that conditional on $f(\tau)=\gamma+\frac{z}{b}$, $\sup_{|t-\tau|\geq \delta', F^c}f(t)$ has a bounded density over $[b, b+\delta_2 b^{-\lambda}]$ for some $\lambda\geq 1$, and thus $$Q(\sup_{|t-\tau|\geq\delta', t\in F^c}f(t)\in [b, b+\delta_2 b^{-\lambda}]|f(\tau)=\gamma+\frac{z}{b})=O(\delta_2).$$
Summarizing the above results, we have that
\begin{eqnarray*}
&&Q(\sup_{|t-\tau|\geq\delta'}f(t)\in [b, b+\delta_2 b^{-\lambda}]) \\
&\leq& Q(\sup_{|t-\tau|\geq\delta', t\in F}f(t)\in [b, b+\delta_2 b^{-\lambda}]) \\
&&+ Q(z \geq -\kappa_0 \log\delta_2) +  Q(\sup_{|t-\tau|\geq\delta', t\in F^c}f(t)\in [b, b+\delta_2 b^{-\lambda}], z \leq -\kappa_0 \log\delta_2)\\
&\leq & 3 \delta.
\end{eqnarray*}
Thus, the last term in \eqref{UI} is bounded by
\begin{eqnarray*}\label{split}
&&Q\Big( \sup_{|t-\tau |\geq \delta'}f(t)>b;\max_{i=1}^{m}f(t_{i})\leq b\Big) \notag\\
&&~~~\leq 3 \delta + Q\Big( \sup_{|t-\tau |\geq \delta'}f(t)>b + \delta_2 b^{-\lambda};\max_{i=1}^{m}f(t_{i})\leq b\Big) .
\end{eqnarray*}
For the second term, we apply the old trick by  choosing
$$\lambda_4 = \delta_2^{2/\alpha+1/\beta_1+\varepsilon_0} b^{-2\lambda/\alpha-\lambda/\beta_1},
$$
and thus
\begin{equation}\label{continuity}
Q(\sup_{|s - t| < \lambda _4} |f(s) - f(t) | >  \delta_2 b^{-\lambda}) <  \delta_2.
\end{equation}
Note that $b^2\leq -\varepsilon_0^{-1} \log \delta_2$, we can choose a different $\varepsilon_0$ such that  $\lambda_4$ can be simplified to
$$\lambda_4 = \delta_2^{2/\alpha+1/\beta_1+\varepsilon_0}.$$
If $\sup_{|s - t| < \lambda _4} |f(s) - f(t) | <  \delta_2 b^{-\lambda}$ and $\sup_{|t-\tau |\geq \delta'}f(t)>b + \delta_2 b^{-\lambda}$, we have that
$$\beta(A_b)\geq \varepsilon_0 \lambda _4^d\zeta ^{-d - \varepsilon _1}.$$
With a different choice of $\varepsilon_0$, we choose
\begin{equation}\label{mm}
m = -2 \lambda _4 ^{-d}\zeta ^{d + \varepsilon _1}\log \delta   = O(\delta^{-d(2/\alpha+1/\beta_1)-\varepsilon_0}),
\end{equation}
then we have
\begin{equation}\label{mmm}
E_b[(1-\beta (A_{b}))^{m};\sup_{|s - t| < \lambda _4} b<|f(s) - f(t) | <  \delta_2 b^{-\lambda},f(t)>b + \delta_2 b^{-\lambda}]\leq \delta 
\end{equation}
Therefore, combining the bounds in \eqref{overshoot}, \eqref{continuity}, and \eqref{mmm}, if $\varepsilon  < e ^{- \varepsilon_0 b^2}$ and we choose $m$ as in \eqref{mm} and, then
\begin{equation*}
Q\Big( \sup_{|t-\tau|>\delta'} f(t)>b;\max_{i=1}^{m}f(t_{i})\leq b\Big) \leq 5\delta .
\end{equation*}%

Putting together the bounds for all the three terms in \eqref{UI}, we have that
$$Q\Big( M>b;\max_{i=1}^{m}f(t_{i})\leq b\Big) \leq 5\delta.$$
If we choose $\delta = \varepsilon^{1+ \varepsilon_0}$ and
$$m =O(\delta^{-d(2/\alpha+1/\beta_1+\varepsilon_0)})= O(\varepsilon^{-d(2/\alpha+1/\beta_1)-2d\varepsilon_0})$$
then according to the bounds on the bound in \eqref{UNI}, we have that
\begin{equation*}
E^{Q}J_{1}\leq \zeta ^{d}\varepsilon .
\end{equation*}%
Similarly, according to the integrability of $\zeta ^{-2d}/mes^{2}(A_{\gamma })$%
, by choosing the same $m$, there exists a $\kappa _{0}$ such that%
\begin{equation*}
E^{Q}(J_{1}^{2})\leq \kappa _{0}\zeta ^{2d}.
\end{equation*}

\subsection{The $J_{2}$ term}

We now proceed to
\begin{equation*}
J_{2}=I(\max_{i=1}^{m}f(t_{i})>b)\left[ \frac{1}{mes(A_{\gamma })}-\frac{1}{%
\widehat{mes}(A_{\gamma })}\right] .
\end{equation*}%
We study the behavior of $J_{2}$ by means of the scaled process $g(t)$ defined as in \eqref{gt}. For the analysis of $J_2$, we translate everything to the scale of $g(t)$. Recall the process $g(t)$ given by \eqref{gt} is
	\begin{equation}
	g(t) = b(f(\tau + t/\zeta)-b),
	\end{equation}
For each $t$, $f(\tau +t/\zeta )>\gamma $ if and only if $g(t)>-1$.

Conditional on $\tau$,  $t_1,...,t_m$ are i.i.d.~with density $k_{\tau,\zeta} (t)$ defined as in \eqref{l}. Let $s_{i}=(t_{i}-\tau )\zeta $ and thus $s_{1},...,s_{m}$ are i.i.d.~following density $k(s)$. We can then rewrite the estimator in \eqref{vol} as
\begin{equation*}
\widehat{mes}(A_{\gamma })=\frac{\zeta ^{-d}}{m}\sum_{i=1}^{m}\frac{%
I(g(s_{i})>-1)}{k(s_{i})}.
\end{equation*}%
Thus, $\widehat{mes}(A_{\gamma })$ is an unbiased estimator of
$mes(A_{\gamma })$, that is, $E(\widehat{mes}(A_{\gamma })|f)=mes(A_{\gamma })$. Conditional on a particular realization of $f(t)$ (or equivalently, g(t)), the variance of $\widehat{mes}(A_{\gamma })$ is given by
\begin{equation*}
Var(\widehat{mes}(A_{\gamma
})|f)= \frac{\kappa _{f}\zeta ^{-2d}}{m},
\end{equation*}%
where
\begin{equation}\label{term1}
\kappa _{f}=Var\Big[ \frac{I(g(S)>-1)}{k(S)}\Big|f\Big] \leq k^{-2}(t_{f})
\end{equation}%
and%
\begin{equation}
t_{f}=\max (|t|:g(t)>-1).  \label{tf}
\end{equation}%
Note that the following inequality $\frac{1}{1+x}-1\geq -x$. Therefore,%
\begin{equation*}
\frac{1}{mes(A_{\gamma })}-\frac{1}{\widehat{mes}(A_{\gamma })}\leq \frac{\widehat{%
mes}(A_{\gamma })-mes(A_{\gamma })}{mes^2(A_\gamma)}
\end{equation*}%
Therefore,%
\begin{equation*}
E\Big[ \Big( \frac{1}{mes(A_{\gamma })}-\frac{1}{\widehat{mes}(A_{\gamma })%
}\Big) ^{2};\widehat{mes}(A_{\gamma })>mes(A_{\gamma })~\Big|~f\Big] \leq
\frac{\kappa _{f}\zeta ^{-2d}}{m\times mes^4(A_\gamma)}.
\end{equation*}%
It is the expectation on the set $\{\widehat{mes}(A_{\gamma })<mes(A_{\gamma })\}$ that induces complications in that the factor $\frac{1}{\widehat{mes}(A_{\gamma })}$ can be very large when there are not many $t_i$'s in the excursion set $A_{\gamma}$.
We now proceed to this case. Conditional on a particular realization of $f$ (and equivalently the
process $g(t)$), the analysis consists of three steps.

\paragraph{Step 1.} Define the $f$-dependent probability%
\begin{equation}\label{pf}
p_{f}\triangleq Q(t_i \in A_{\gamma}|f)=\int_{A_{\gamma }}k_{\tau ,\zeta }(t)dt=\int_{A_{-1}^{g}}k(t)dt
\end{equation}%
Using standard exponential change of measure techniques for large deviations \cite{dembo2009large}, we obtain that
\begin{equation}\label{largedeviation}
Q\left[ \sum_{i=1}^m I(t_{i}\in A_{\gamma })\leq p_{f}(1-\delta_3)m\Big | f\right] \leq
e^{-mI(\delta_3,p_{f})}
\end{equation}%
for all $\delta_3\in (0,1)$, where the rate function%
\begin{equation*}
I(\delta _{3},p_{f})=\theta _{\ast }p_{f}(1-\delta _{3})-\varphi (\theta
_{\ast })
\end{equation*}%
and
\begin{equation*}
\varphi (\theta )=\log (1-p_{f}+p_{f}e^{\theta }),\quad \theta _{\ast }=\log
\left( 1-\frac{\delta _{3}}{1-p_{f}(1-\delta _{3})}\right) .
\end{equation*}%
By elementary calculus, if we choose $\delta _{3}=\frac{1}{2}$, then we have
that for some $\varepsilon_0 >0$
\begin{equation*}
I(\delta _{3},p_{f})\geq \varepsilon _{0}p_{f} \quad \mbox{for all $p_f>0$.}
\end{equation*}
Therefore,
\begin{eqnarray*}
&&  E\Big[ \Big( \frac{1}{mes(A_{\gamma })}-\frac{1}{\widehat{mes}(A_{\gamma })%
}\Big) ^{2};\widehat{mes}(A_{\gamma })\leq mes(A_{\gamma }),\max_{i=1}^{m}f(t_{i})>b, \sum_{i=1}^m I(t_{i}\in A_{\gamma })\leq \frac{p_{f}m}2~\Big|~f\Big]\\
&\leq&  E\Big[ \frac{4}{\widehat {mes}^2(A_{\gamma })};\widehat{mes}(A_{\gamma })\leq mes(A_{\gamma }),\max_{i=1}^{m}f(t_{i})>b, \sum_{i=1}^m I(t_{i}\in A_{\gamma })\leq \frac{p_{f}m} 2~\Big|~f\Big].
\end{eqnarray*}
Note that there is at least one $t_i$ in the excursion set $A_\gamma$. Therefore, the estimator $\widehat {mes}(A_{\gamma })\geq m^{-1}\zeta^{-d} k^{-1}(t_f)$. Thus, the above expectation is upper bounded by
$$\leq \kappa k(t_f)^{-2} m^2 \zeta ^ {2d} e^{-\varepsilon_0 m p_f}.$$
\paragraph{Step 2.} We consider the situation that $\sum I(t_{i}\in A_{\gamma })>\frac{p_{f}m}{2}$.
The unbiasedness of $\widehat{mes} (A_\gamma)$ suggests that
$$mes(A_\gamma) = E\Big(\frac{1}{\zeta^{d}k(S)} ~|~ S \in A_{-1}^g\Big) p_f,$$
where $S$ is a random index following density $k(s)$. Note that on the set $A_{-1}^g$, $k(t_f)\leq k(S)\leq \kappa_1$. Thus, if we let  $\lambda _{f}=\kappa^{-1} _{1}k(t_{f})$, then on the set $\{\sum I(t_{i}\in A_{\gamma })>\frac{p_{f}m}{2}\}$ we have
\begin{equation*}
\widehat{mes}(A_{\gamma })\geq \frac{\lambda _{f}mes(A_{\gamma })}2.
\end{equation*}%
Thus, using Taylor expansion, we have that%
\begin{eqnarray*}
&&E_b\Big[ \Big( \frac{1}{mes(A_{\gamma })}-\frac{1}{\widehat{mes}(A_{\gamma
})}\Big) ^{2};\widehat{mes}(A_{\gamma})< {mes}(A_{\gamma});\sum I(t_{i}\in A_{\gamma })>\frac{p_{f}m}{2}\Big|f\Big]  \\
&\leq &E_b\left[ \frac{2^4\left( mes(A_{\gamma })-\widehat{mes}(A_{\gamma })\right) ^{2}}{\lambda _{f}^{4}mes^4(A_\gamma)};\widehat{mes}(A_{\gamma})< {mes}(A_{\gamma});\sum I(t_{i}\in A_{\gamma })>\frac{p_{f}m}{2}~\Big |~f\right]  \\
&\leq &\frac{2^4\kappa _{f}\zeta ^{-2d}}{m\lambda _{f}^{4} mes^4(A_\gamma)}.
\end{eqnarray*}

\paragraph{Step 3.} We combine the previous analysis and have that%
\begin{equation}
E_b(J_{2}^{2}|f)\leq \frac{2^4\zeta ^{-2d}}{mes^4 (A_\gamma)} \frac{\kappa_1^4  }{k^{4} (t_{f})m}
+\frac{\kappa_f\zeta^{-2d}}{m\times mes^4(A_\gamma)}+ k(t_f)^{-2}m^2\zeta^{2d}e^{-\varepsilon_0 mp_f}.  \label{total}
\end{equation}%
The density $k(t)$ has a  heavy tail that is
\begin{equation*}
k(t)\sim \frac{1}{|t|^{d+\varepsilon _{1}}}
\end{equation*}%
and $k(t)\leq \kappa _{1}$ for all $t$.
In Step 3, we provide a bound on the distribution of $t_{f}$ and $%
p_{f}$.

We start with $t_{f}$. For each $s>0$, $t_{f}>s$ if and only if $%
\sup_{|t-\tau |>s}g(t)>-1$.
According to the results in Lemmas \ref{LemmaI21} and \ref{LemmaI22} (and the corresponding bounds in \eqref{L2} and \eqref{L3} for the non-constant variance case presented in the Supplemental Material),
for $s$ sufficiently large, there exists some $\varepsilon _{0}>0$ such that
\begin{equation}\label{tfpdf}
Q(t_{f}>s)=Q(\sup_{|t-\tau |>s}g(t)>-1)\leq \exp \{-s^{\varepsilon_0}\},\qquad \mbox{for $s< \delta ' \zeta$}
\end{equation}
and
$$Q(t_{f}>s)\leq \exp(- \varepsilon_0 b^2 ),\qquad \mbox{for $s > \delta' \zeta$.}$$
Therefore, all moments of $k^{-1}(t_f)$ is bounded.%
\begin{eqnarray*}
&&E_b[ k^{-l}(t_{f})] \leq  E_b[t_f^{(d+\epsilon_1)l}]\leq \kappa_l
\end{eqnarray*}%
for some constant $\kappa_l$ possibly depending on $l$. Thus, by Cauchy-Schwarz inequality, the expectation of the first two terms in \eqref{total} can be bounded as follows
\begin{eqnarray*}
E\Big[\frac{2^4\zeta ^{-2d}}{mes^4 (A_\gamma)} \frac{\kappa_1^4  }{k^{4} (t_{f})m};M>b\Big]&\leq& \frac{O(1)} m \sqrt {E\Big[\frac{\zeta ^{-4d}}{mes^8 (A_\gamma)} \Big] E(k^{-8}(t_f))}\leq \frac {\kappa \zeta^2} {m} \\
E\Big[\frac{\kappa_f\zeta^{-2d}}{m\times mes^4(A_\gamma)}\Big] &\leq &\frac{O(1)} m \sqrt {E\Big[\frac{\zeta ^{-4d}}{mes^8 (A_\gamma)} \Big] E(k^{-4}(t_f))}\leq \frac {\kappa \zeta^2} {m}.
\end{eqnarray*}

We now proceed to the third term in (\ref{total}) concerning $p_{f}$.  The expectation of this term is bounded by
\begin{equation*}
E_b(m^2 k(t_f)^{-2}e^{-m\varepsilon _{0}p_{f}}; M>b)\leq\sqrt{E_b(m^4 e^{-2m\varepsilon _{0}p_{f}};M>b)}\sqrt{E_b (k^{-4}(t_f))}.
\end{equation*}
The second term $\sqrt{E_b (k^{-4}(t_f))}$ is $O(1)$. We proceed to the first term
\begin{eqnarray*}
E_b(m^4 e^{-2m\varepsilon _{0}p_{f}};M>b) &= &E_b(m^4 e^{-2m\varepsilon _{0}p_{f}}; p_f \geq  m^{-1/2})+E_b(m^4 e^{-2m\varepsilon _{0}p_{f}}; p_f \leq  m^{-1/2}, M>b)\\
&\leq & m^4 e^{-2\varepsilon_0 \sqrt m} + m^4 Q(p_f \leq m^{-1/2}, M>b).
\end{eqnarray*}
We now proceeding to controlling $Q(p_f \leq m^{-1/2}, M>b)$.
Note that
\begin{equation*}
p_{f}\geq k(t_{f})mes(A_{-1}^{g}).
\end{equation*}%
For each $x>0$,%
\begin{eqnarray}\label{pf}
Q(p_{f}<x, M > b)&\leq& Q\Big(k(t_{f})<\sqrt{x}~~\text{ or }~~mes(A_{-1}^{g})<\sqrt{x}, M >b \Big)\notag\\
&\leq&
Q(t_{f}>x ^{- \frac 1 {2(d+\varepsilon_1)}})+Q(mes(A_{-1}^{g})<\sqrt{x}, M >b).
\end{eqnarray}%
According to the bounds in \eqref{tailbound} and \eqref{mesub1}, for some $\delta _{0}>0$ and $\varepsilon _{0}>0$, we have that%
\begin{equation*}
Q(mes(A_{-1}^{g})<\sqrt{x}, M > b)  = Q(mes (A_\gamma) < \zeta^{-d} \sqrt x, M > b )\leq e^{-x^{-\varepsilon _{0}/d}}
\end{equation*}%
for $x$ sufficiently small.
According to the previous result, we have that
\begin{equation*}
Q(t_{f}>x ^{- \frac 1 {2(d+ \varepsilon_1)}})\leq e^{-x^{-\varepsilon _{0}}}, \quad \mbox{for $ x ^{-  \frac 1 {2(d+ \varepsilon_1)}} < \delta' \zeta$}
\end{equation*}%
and
$$Q(t_{f}>x ^{- \frac 1 {2(d+ \varepsilon_1)}})\leq e^{-\varepsilon_0 b^2},\quad \mbox{for $ x ^{-  \frac 1 {2(d+ \varepsilon_1)}} \geq \delta' \zeta$.}$$
Thus, for some $\lambda$ large enough and $\varepsilon_0$ small enough, we have that
$$Q(p_f \leq m^{-1/2}, M>b)\leq e^{-m^{\varepsilon_0}},\quad \mbox{for $m < b^{\lambda}$};$$
 for $m> b^\lambda$ (with $\lambda$ sufficiently large),  $t_f > m ^{ \frac 1 {4(d+\varepsilon_1)}}$ implies that $\tau + t_f /\zeta \notin T$, that is, $m ^{ \frac 1 {4(d+\varepsilon_1)}}$ is too large and thus
$$Q(p_f < m^{-1/2})=0,\qquad \mbox{for $m> b^\lambda$.}$$
Therefore,
$$m^4 Q(p_f \leq m^{-1/2}, M>b) \leq \kappa m^4 e^{-m^{\varepsilon_0}}$$
for $m$ sufficiently large and furthermore
$$E_b(m^4 k(t_f)^{-2}e^{-m\varepsilon _{0}p_{f}}; M>b)\leq \kappa m^4 e^{-m^{\varepsilon_0}/2}.$$

%

Summarizing the results in all the three steps, we have that
$$E_b (J_2^2 ) \leq \frac{\kappa\zeta^{-2d}}{m}.
$$
Therefore, if we choose
$$m = \kappa \max \{\varepsilon ^{-2}, \varepsilon ^{-d(2/\alpha+1/\beta_1+\varepsilon_0)}\}=O(\varepsilon ^{-d(2/\alpha+2/\beta_1)})$$
then,
$$E_b|\hat Z_b - Z_b | = E_b |J_1 + J_2| \int_T P(f(t) > \gamma)dt \leq \varepsilon \zeta^{d} \int_T P(f(t) > \gamma)dt$$
and
$$E_b(\hat Z_b - Z_b )^2 \leq  \kappa \zeta^{2d}\Big ( \int_T P(f(t) > \gamma)dt\Big)^2.$$

\section{Proof of Theorem \ref{ThmInt}}\label{SecInt}

\subsection{The asymptotic lower bound and the continuous estimator}

We start the analysis by first establishing an asymptotic lower bound of $%
v(b)$. Note that%
\begin{equation*}
v(b)=E(mes(A_{\gamma }))E_b\left[ \frac{1}{mes(A_{\gamma })}%
\int_{A_{b}}\xi(t)dt\right] .
\end{equation*}%
Since $\xi(t)$ is bounded by $a_2 $, then $v(b)\leq a_2 E(mes(A_{\gamma
}))$. In addition, a lower bound can be given by
\begin{eqnarray*}
E\Big(\int_{A_b} \xi(t) dt\Big)\geq a_1 E(mes(A_b))
\end{eqnarray*}
%
%
%
%
Thus%
\begin{equation*}
v(b)=\Theta (1)E(mes(A_{\gamma })).
\end{equation*}%
The second moment of the estimator is%
\begin{equation*}
E_b(Y_{b}^{2})=E^{2}(mes(A_{\gamma }))E_b\left[ \frac{\alpha ^{2}(b)}{%
mes^{2}(A_{\gamma })};M>b\right] \leq a_2 ^{2}E^{2}(mes(A_{\gamma }))\leq
\frac{a_2^2}{a_1^2}v(b).
\end{equation*}

\subsection{Analysis of the discrete estimator}

We start the analysis by the following decomposition%
\begin{eqnarray*}
\hat{Y}_{b}-Y_{b} &=&\left[ \frac{\alpha (b)}{mes(A_{\gamma })}I(\sup
f(t)>b)-\frac{\hat{\alpha}(b)}{\widehat{mes}(A_{\gamma })}%
I(\max_{i=1}^{m}f(t_{i})>b)\right] E(mes(A_{\gamma })) \\
&=&E(mes(A_{\gamma })) \\
&&\times \Big[ \frac{\alpha (b)I(\sup f(t)>b)}{mes(A_{\gamma })}-\frac{%
\alpha (b)I(\max_{i=1}^{m}f(t_{i})>b)}{mes(A_{\gamma })} \\
&&+\frac{\alpha (b)I(\max_{i=1}^{m}f(t_{i})>b)}{mes(A_{\gamma })}-\frac{\hat{%
\alpha}(b)I(\max_{i=1}^{m}f(t_{i})>b)}{\widehat{mes}(A_{\gamma })}\Big].
\end{eqnarray*}%
We redefine the terms%
\begin{eqnarray*}
J_{1} &=&\frac{\alpha (b)I(\sup f(t)>b)}{mes(A_{\gamma })}-\frac{\alpha
(b)I(\max_{i=1}^{m}f(t_{i})>b)}{mes(A_{\gamma })} \\
J_{2} &=&\frac{\alpha (b)I(\max_{i=1}^{m}f(t_{i})>b)}{mes(A_{\gamma })}-%
\frac{\hat{\alpha}(b)I(\max_{i=1}^{m}f(t_{i})>b)}{\widehat{mes}(A_{\gamma })}%
.
\end{eqnarray*}%
Note that the factor $\alpha (b)/mes(A_{\gamma })$ is bounded by $a_2$, so we have
\begin{equation*}
  E_b|J_1|\leq a_2 Q(\sup f(t)>b, \max_{i=1}^m f(t_i)>b),\qquad E_b(J_1^2)\leq a_2^2 Q(\sup f(t)>b, \max_{i=1}^m f(t_i)>b)
\end{equation*}
 According to
the previous analysis, for each $\varepsilon $, there exists an $m=O(\varepsilon^{-d(2/\alpha+1/\beta_1)-\varepsilon_0)}$ such that%
\begin{equation*}
E_b(|J_{1}| ~~|~f)\leq a_2\varepsilon ,\quad E_b(J_{1}^{2}|f)=a_2^2 \varepsilon.
\end{equation*}%
For the second term, we apply similar analysis as the proof for Theorem \ref{ThmDis}.
Note that $\alpha(b)\leq a_2mes(A_\gamma)$, so by rearranging terms in $J_2$, we have
\[
|J_{2}|\leq \left[ \frac{|\alpha (b)-\hat{\alpha}(b)|}{mes(A_{\gamma })}%
+a_{2}\frac{|mes(A_{\gamma })-\widehat{mes}(A_{\gamma })|}{mes(A_{\gamma })}%
\right] I(M>b).
\]%
Because $\hat{\alpha}(b)$ is an unbiased estimator for $\alpha(b)$ conditional on $f$, we have
\begin{equation*}
  E_b\Big[(\hat{\alpha}(b)-\alpha(b))^2|f\Big]\leq m^{-1}a_2^2k^{-2}(t_f)\zeta^{-2d}.
\end{equation*}
Thus,
\begin{eqnarray*}
  &&E_b\Big[(\hat{a}(b)-\alpha(b))^2+a_2(mes(A_\gamma)-\widehat{mes}(A_\gamma))^2\Big|f\Big]\\
  &\leq&2E_b\Big[(\hat{\alpha}(b)-\alpha(b))^2|f\Big]+2a_2^2E_b\Big[(mes(A_\gamma)-\widehat{mes}(A_\gamma))|f\Big]\\
  &\leq&4a_2^2m^{-1}k^{-2}(t_f)\zeta^{-2d}.
\end{eqnarray*}
%
Therefore,
\begin{eqnarray*}
E_b\Big(|J_2|^2 |f\Big)&\leq & \frac{4a_2^2}{\lambda_f^2 mes(A_\gamma)^2\zeta^{2d}mk^{2}(t_f)}\\
E_b\Big(|J_2||f\Big)&\leq & \frac{2a_2}{\lambda_f mes(A_\gamma)\zeta^d\sqrt{m}k(t_f)}\\
\end{eqnarray*}
%
According the proof in Section \ref{SecDis}, there exists a $\kappa >0$ such that%
\[
E(|J_{2}|)\leq \frac{\kappa }{\sqrt{m}}.
\]%
With a similar argument, we have that%
\[
E(J_{2}^{2})\leq \kappa .
\]
Summarizing the result for $J_1$ and $J_2$, we can choose $m=O(\max(\varepsilon^{-d(2/\alpha+1/\beta_1+\varepsilon_0)}, \varepsilon^{-2}))=O(\varepsilon^{-d(2/\alpha+2/\beta_1)})$, such that
\begin{equation*}
  E_b(\hat{Y}_b-v(b))\leq \varepsilon v(b), \qquad Var(\hat{Y}_b)=O(1).
\end{equation*}
\bibliographystyle{plain}
\bibliography{bibprob,bibstat,RefGrant,bibforcont}

\newpage

\appendix

\centerline{\huge \bf Supplemental Material}
\section{Proof of Theorem \ref{ThmCont} when $\sigma(t)$ is of Type 2 in Assumption A4}

In our proof for Type 2 standard deviation, we use similar methods as that for Type 1.
%
We are going to establish similar results as in \eqref{tailbound} and Lemmas \ref{LemmaI21} and \ref{LemmaI22} hold for Gaussian random field with type 2 standard deviation.
To proceed, we provide some bounds on the distribution of $\tau$. The next lemma suggests that  $\tau$  is close to
$$t^* = \arg\sup_{t\in T} \sigma(t).$$

\begin{lemma}\label{Lemmah}
There exists constants $\delta, ~\varepsilon_0>0$  small enough and $\kappa>0$ large enough (but independent of $b$), such that for $x>\kappa$ the following bounds hold
\begin{itemize}
  \item[(i)] $\int_{|t-t^*|\leq \zeta_2^{-1}} h_b(t) dt>\varepsilon_0,$
  \item[(ii)] $\int_{\delta>|t-t^*|>x\zeta_2^{-1}} h_b(t)dt< \exp(-x^{\alpha_2/2})$,
  \item[(iii)]$\int_{|t-t^*|>\delta}h_b(t)dt<\exp(-\varepsilon_0b^{2})$.
\end{itemize}
\end{lemma}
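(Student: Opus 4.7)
The plan is to analyze $h_b(t) \propto P(f(t)>\gamma)$ via the standard Mills ratio asymptotics
$$P(f(t)>\gamma) = (1+o(1))\frac{\sigma(t)}{\gamma-\mu(t)}\varphi\Big(\frac{\gamma-\mu(t)}{\sigma(t)}\Big),$$
uniform in $t \in T$, and then estimate the ratio $P(f(t)>\gamma)/P(f(t^*)>\gamma)$. The key quantity to control is the exponent
$$\frac{(\gamma-\mu(t))^2}{2\sigma^2(t)} - \frac{(\gamma-\mu(t^*))^2}{2\sigma^2(t^*)}.$$
Writing $\eta(t) = \sigma(t^*)-\sigma(t)$, Condition A4 (Type 2) gives $\eta(t) \sim \Lambda L_2(|t-t^*|)|t-t^*|^{\alpha_2}$, while Condition A5 bounds the mean contribution by $O(\sqrt{L_2(|t-t^*|)}\,|t-t^*|^{\alpha_2/2})$. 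Combining these with $\gamma=b-1/b$ and the scaling relation $L_2(\zeta_2^{-1})\zeta_2^{-\alpha_2} \asymp b^{-2}$ from the definition of $\zeta_2$, Lemma \ref{LemmaSlow} allows me to control slowly varying factors uniformly on $\zeta_2^{-1} \leq |t-t^*| \leq \delta$ and conclude that there exist absolute constants $\kappa, c>0$ such that for $|t-t^*| \leq \delta$,
$$\frac{P(f(t)>\gamma)}{P(f(t^*)>\gamma)} \leq \kappa \exp\Big(-c\, b^2\, L_2(|t-t^*|)|t-t^*|^{\alpha_2}\Big).$$

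Given this key estimate, the three assertions follow by integration in polar coordinates. For (i), the upper bound above integrates (over any $\delta$-ball around $t^*$) to at most $\kappa \zeta_2^{-d} P(f(t^*)>\gamma)$, whereas on $\{|t-t^*|<\zeta_2^{-1}\}$ the exponent in the key estimate is $O(1)$, so by Condition A6 the integral there is at least $\varepsilon_0 \zeta_2^{-d} P(f(t^*)>\gamma)$. For (ii), integrating the exponential bound over the annulus $\zeta_2^{-1}x < |t-t^*| < \delta$ yields a bound of the form $\kappa' x^{d-1} \zeta_2^{-d} P(f(t^*)>\gamma) \exp(-c\,x^{\alpha_2})$ (using Lemma \ref{LemmaSlow} again to strip $L_2$ across the annulus), and dividing by the normalizer from (i) gives a bound of order $\exp(-c\,x^{\alpha_2})$ times a polynomial in $x$, which is absorbed into $\exp(-x^{\alpha_2/2})$ for $x$ sufficiently large. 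For (iii), on $\{|t-t^*|>\delta\}$ Conditions A3 and A4 imply $\sigma(t) \leq \sigma(t^*)-\eta_0$ for some $\eta_0>0$, so the Mills ratio yields $P(f(t)>\gamma) \leq \exp(-\gamma^2/(2\sigma^2(t^*)) - \varepsilon_0 b^2)$; dividing by the normalizer from (i), the ratio is $\exp(-\varepsilon_0 b^2)$ up to polynomial factors that can be absorbed by shrinking $\varepsilon_0$.

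The main obstacle will be the careful bookkeeping of the slowly varying function $L_2$ across the wide range of scales from $\zeta_2^{-1}$ up to the fixed constant $\delta$, together with the simultaneous requirement that the mean-function variation allowed by Condition A5 not swamp the leading contribution from $\eta(t)$. Concretely, A5 contributes an additive term of order $b\sqrt{L_2(|t-t^*|)}|t-t^*|^{\alpha_2/2}$ to the exponent, which must be checked to be $o(b^2 L_2(|t-t^*|)|t-t^*|^{\alpha_2})$ uniformly on $\{|t-t^*| \geq \zeta_2^{-1}\}$; this uses precisely the defining relation $L_2(\zeta_2^{-1})\zeta_2^{-\alpha_2} \asymp b^{-2}$ and the slowly varying property. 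One must be disciplined in separating the multiplicative $O(1)$ Mills-ratio error from the leading exponential, since absorbing the former into the latter would destroy the decay rate needed in (ii).
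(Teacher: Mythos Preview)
Your proposal is correct and follows essentially the same approach as the paper: Mills-ratio asymptotics for $P(f(t)>\gamma)$, the Type~2 expansion of $\sigma(t)$ around $t^*$, and then integration over the three regions with the normalizer $\int_T P(f(t)>\gamma)\,dt$. The paper's proof is in fact much sketchier than yours (it simply writes down the three integral inequalities and says ``after basic calculation of expansion and integration''), so your more careful tracking of the $L_2$ factors via Lemma~\ref{LemmaSlow} and of the A5 mean contribution is a welcome elaboration; one minor slip is that in part~(iii) you invoke Condition~A3, which concerns the correlation function rather than $\sigma$---the bound $\sigma(t)\leq\sigma(t^*)-\eta_0$ on $\{|t-t^*|>\delta\}$ follows from A4 (unique maximum) together with continuity and compactness alone.
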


To continue the analysis of $I_1$ and $I_2$, we discuss two different scenarios:
\begin{enumerate}
\item
$\alpha_1>\alpha_2$, or $\alpha_1=\alpha_2$ and $\lim_{x\to 0}\frac{L_1(x)}{L_2(x)}\in\{0,1\}$; that is, as $x\to 0$, $L_1(x) x^{\alpha_1}\leq (1+o(1)) L_2 (x) x^{\alpha_2}$.

\item $\alpha_1<\alpha_2$, or $\alpha_1=\alpha_2$ and $\lim_{x\to 0}\frac{L_1(x)}{L_2(x)}=\infty$; that is, as $x\to 0$, $ L_2 (x) x^{\alpha_2} = o(1) L_1(x) x^{\alpha_1}$.
\end{enumerate}
The proof of this lemma is provided in the Supplemental Material B.


\subsection{Proof for scenario 1: $\alpha_1>\alpha_2$, or $\alpha_1=\alpha_2$ and $\lim_{x\to 0}\frac{L_1(x)}{L_2(x)}\in\{0,1\}$.}

For the proof of this scenario, the variation of $\sigma (t)$ is the dominating term. According to A2, there exists a constant $\Delta$ such that
\begin{equation}\label{corbd1}
1-r(s,t) \leq \Delta L_2(|s-t|) |s-t|^{\alpha_2}
\end{equation}
In addition, we can further replace the slowly varying function $L_1$ in \eqref{corbd} by $L_2$ and the inequality still holds, that is,
\begin{equation}\label{A2'}
  |r(t,t+s_1)-r(t,t+s_2)|\leq\kappa_r \max(L_2(|s_1|)|s_1|^{\beta_0},L_2(|s_2|)|s_2|^{\beta_0})|s_1-s_2|^{\beta_1}.
\end{equation}
For the proof of this scenario, we work under the above two inequalities instead of A2.
%
The proof follows a similar idea as that of the constant variance case by providing bounds for $I_1$ and $I_2$.

\paragraph{The $I_1$ term.}

For a given $\tau$ and $z$, we adopt a similar conditional representation as in \eqref{decomp}.
We start with establishing similar results as in Lemma \ref{LemmaMu}. Since $L_1(x) x^{\alpha_1}\leq (1+o(1)) L_2 (x) x^{\alpha_2}$, we can replace $\alpha_1$ and $L_1$ in the statement of Lemma \ref{LemmaMu} by $\alpha_2$ and $L_2$ and the statement still holds.
Now we proceed to prove \eqref{tailbound}.
  According to the expression \eqref{expQ}, we proceed by deriving an upper bound of
\begin{equation}\label{tailint1}
 \int_{ T}P\left(\frac{1}{mes(A_\gamma)}>y^{-d}\zeta_2^{-1}, M>b|f(\tau)=\gamma+\frac{z}{b}\right)h_b(\tau)\frac{q_{b,\tau}(\gamma+{z}/{b})}{b}d\tau dz.\quad \mbox{for $y$ small enough.}
\end{equation}
We discuss two situation: $z>1$ and $0<z\leq 1$.

\textbf{Situation 1: $z>1$.}
From condition A2, A4, A5, \eqref{A2'} and Lemma \ref{LemmaSlow}(i), for $|t|<c_dy\zeta_2^{-1}$,  we have that
\begin{eqnarray*}
  |\mu_\tau(t)-\mu_\tau(0)|
    \leq \kappa_\mu\sqrt{L_2(|t|)}|t|^{\alpha_2/2}+\kappa b L_2(|t|)|t|^{\alpha_2}
  =O(y^{\alpha_2/4}b^{-1})
\end{eqnarray*}
Note that $\mu_\tau(0) = \gamma + z/b > \gamma  + 1/b$. Thus, by picking $y_0$ small enough, we have that
$$\mu_\tau(t)\geq \gamma+  \frac{1}{2b} \qquad \mbox{for $|t|\leq c_d y\zeta_2^{-1}$.}$$
With a similar development as in \eqref{eqI11} and the conditional variance calculation for $f_0(t)$ as in \eqref{eqI13}, that is,
\begin{eqnarray*}
  C_0(t,t) =O(y^{\alpha_2/2}b^{-2}),
\end{eqnarray*}
 we conclude that for some small $\varepsilon_0>0$
\begin{equation*}
Q\Big(\frac{1}{mes(A_\gamma)}>y^{-d}\zeta_2^{-1}, M>b\Big)\leq
P\Big(\inf_{|t|\leq c_d y\zeta_2^{-1}}(f_0(t)+\mu_\tau(t))\leq \gamma\Big)
\leq P(\inf_{|t|\leq c_d y\zeta_2^{-1}} |f_0(t)| > \frac 1{2b})\leq \exp(-y^{-\varepsilon_0}).
\end{equation*}


\subparagraph{Situation 2: $0<z\leq 1$.}

For $0<z<1$, we choose $\delta_0,\delta_1$ to be  small enough and $\lambda$ to be large enough and develop   bounds for the above probability under four cases (same as in the proof of constant variance case):
\begin{itemize}
\item []Case 1. $t\in C_1 \triangleq \{t: 0<|t -\tau|< y^{-\delta_0}\zeta_2^{-1}\}$,
\item []Case 2. $t\in C_2 \triangleq \{t: y^{-\delta_0}\zeta_2^{-1}<|t -\tau|<\delta_1\}$,
\item []Case 3. $t\in C_3 \triangleq \{t: |t -\tau|\geq\delta_1\}$ and $y<b^{-\lambda}$,
\item []Case 4. $t\in C_3$ and $y\geq b^{-\lambda}$.
\end{itemize}

With these notation, we have the following bound
\begin{eqnarray*}
&&Q\Big(\frac{1}{mes(A_\gamma)}>y^{-d}\zeta_2^{-1}, M>b\Big)\\
  & &~~~~~~~\leq \sum_{i=1}^3  \int_{ T}P\left(\frac{1}{mes(A_\gamma)}>y^{-d}\zeta^{d},\sup_{t\in C_i}f(t)>b|f(\tau)=\gamma+\frac{z}{b}\right)h_b(\tau)\frac{q_{b,\tau}(\gamma+{z}/{b})}{b} d\tau dz.
\end{eqnarray*}
With the same argument for \eqref{eqI16}, each of the summands on the right-hand-side is further bounded by
\begin{eqnarray}\label{tailboundsum}
  &&\int_{ T}P\left(\frac{1}{mes(A_\gamma)}>y^{-d}\zeta_2^{d},\sup_{t\in C_i}f(t)>b\Big |f(\tau)=\gamma+\frac{z}{b}\right)h_b(\tau)\frac{q_{b,\tau}(\gamma+{z}/{b})}{b}d\tau dz\\
  &&~~~~~~\leq\int_{ T}P\Big(\sup_{t\in C_i,|s-t|\leq c_d y\zeta_2^{-1}}|f(t)-f(s)|>\frac{1}{b},\sup_{t\in C_i}f(t)>b\Big|f(\tau)=\gamma+\frac{z}{b}\Big)h_b(\tau)\frac{q_{b,\tau}(\gamma+{z}/{b})}{b}d\tau dz.\nonumber
\end{eqnarray}
Similarly, we define
$$x_i \triangleq  \zeta_2\times |t_i -\tau|.$$

\subparagraph{Case 1: $0<|t-\tau|<y^{-\delta_0}\zeta_2^{-1}.$}
We adopt the same lattice and cover sets, $\tilde{T}$, and $B_i$, defined on page \pageref{latticeT} for the proof of the constant variance case, with $\zeta_1$ replaced by $\zeta_2$.
For this case, we bound the right-hand-side of \eqref{tailboundsum} by
$$\sum_{B_i \cap C_1 \neq \emptyset}\int_{ T}P\Big(\sup_{t\in B_i,|s-t|\leq c_d y\zeta_2^{-1}}|f(t)-f(s)|>\frac{1}{b}\Big|f(\tau)=\gamma+\frac{z}{b}\Big)h_b(\tau)\frac{q_{b,\tau}(\gamma+{z}/{b})}{b}d\tau dz$$
and take advantage of the conditional representation $f(t) = \mu_\tau(t) + f_0(t).$ We proceed to investigating the variation of $\mu_\tau(t)$ and $f_0(t)$. For $f_0(t)$ and $|s-t|\leq c_d y\zeta_2^{-1}$, with the same argument as in \eqref{condvar}, we have that $Var(f_0(t) - f_0(s)) \leq \kappa y^{\alpha_2/2} b^{-2}$. For the conditional mean, by means of the representation \eqref{Cond},
\begin{eqnarray*}
  |\mu_\tau(t)-\mu_\tau(s)|
  &\leq& \kappa\zeta_2^{-{\alpha_2}/2}\sqrt{L_2(y/\zeta_2)}y^{{\alpha_2}/2}
  +
  \kappa b L_2(c_dy/\zeta_2) y^{\alpha_2}\zeta_2^{-{\alpha_2}}
  +\kappa (x_i+1)^{\beta_0} b L_2((x_i+1)/\zeta_2)y^{\beta_1}\zeta_2^{-{\alpha_2}}\\
  &\leq & \kappa b^{-1}  y^{\varepsilon_0}
\end{eqnarray*}
for some small positive constant $\varepsilon_0$.
Now we pick $y_0$ small enough. For $0<y<y_0$ and   $|\mu_\tau(t)-\mu_\tau(s)|< \frac 1 {2b}$, together with the variance control of $f_0(t) - f_0(s)$, we have that
\begin{eqnarray*}
&&\int_{ T}P\Big(\sup_{t\in B_i,|s-t|\leq c_d y\zeta_2^{-1}}|f(t)-f(s)|>\frac{1}{b}\Big|f(\tau)=\gamma+\frac{z}{b}\Big)h_b(\tau)d\tau\\
&&~~~~~~~\leq\int_{ T}P\Big(\sup_{t\in B_i,|s-t|\leq c_d y\zeta_2^{-1}}|f_0(t)-f_0(s)|>\frac{1}{2b}\Big|f(\tau)=\gamma+\frac{z}{b}\Big)h_b(\tau)d\tau\\
&&~~~~~~~\leq\exp(-y^{-\varepsilon_0})
\end{eqnarray*}
for some $\varepsilon_0>0$. We sum up all the $B_i$'s such that $0<|t_i-\tau|<y^{-\delta_0}\zeta_2^{-1}$ and obtain that
  \begin{eqnarray*}
    P\Big(\frac{1}{mes(A_\gamma)}>y^{-d}\zeta_1^{d},\sup_{t\in C_1}f(t)>b|f(\tau)=\gamma+\frac{z}{b}\Big)\leq\exp(-y^{-\varepsilon_0})
  \end{eqnarray*}
  for which we may need to choose a smaller $\varepsilon_0$.

\subparagraph{Case 2: $y^{-\delta_0}\zeta_2^{-1}<|t-\tau|<\delta_1$.}

We split \eqref{tailboundsum} as follows
\begin{eqnarray}  \label{tailboundc2}
  \eqref{tailboundsum}&\leq& \sum_{B_i \cap C_2 \neq \emptyset} \int_{|\tau-t^*|\leq\frac{1}{3}y^{-\delta_0}\zeta_2^{-1}}P\Big(\sup_{t\in B_i}f(t)>b \Big|f(\tau)=\gamma+\frac{z}{b}\Big) h_b(\tau)   d\tau\notag\\
  &&+~~~~\int_{|\tau-t^*|>\frac{1}{3}y^{-\delta_0}\zeta_2^{-1}}h_b(\tau)d\tau.
\end{eqnarray}
For this case, we implicitly requires that $y^{-\delta_0} < \delta_1 \zeta_2$. Thus, Lemma \ref{Lemmah} (ii) and (iii) provide an upper bound of the second term in the above display
\begin{equation*}
  \int_{|\tau-t^*|>\frac{1}{3}y^{-\delta_0}\zeta_2^{-1}}h_b(\tau)d\tau\leq \exp(-y^{-\varepsilon_0})
\end{equation*}
for $\varepsilon_0$ and $y$ sufficiently small and $y^{-\delta_0}<\delta_1\zeta_2$.

For the first term on the right-hand-side of \eqref{tailboundc2}, we bound it in a similar way as in constant variance case. In particular, each summand is bounded by
$$\sup_{|\tau-t^*|\leq\frac{1}{3}y^{-\delta_0}\zeta_2^{-1}}P(\sup_{t\in B_i}f(t)>b|f(\tau)=\gamma+\frac{z}{b}) $$
For $y^{-\delta_0}\zeta_2^{-1}<|t-\tau|<\delta_1$ and $|\tau-t^*|\leq\frac{1}{3}y^{-\delta_0}\zeta_2^{-1}$ we have that $|t-t^*|>\frac 2 3 y^{-\delta_0}\zeta_2^{-1}.$
Using the expansion $\sigma(t^*) - \sigma (t) \sim \Lambda L_2(|t-t^*|) |t-t^*|^{\alpha_2}$,
we have that
\begin{equation}\label{vratio}
\frac{\sigma(t)}{\sigma(\tau)}\leq 1-\varepsilon_0  \frac{L_2(x_i\zeta_2^{-1})}{L_2(\zeta_2^{-1})}\frac{( \zeta_2 |t_i -\tau|)^{\alpha_2}}{b^{2}}, ~~~\mbox{for some small $\varepsilon_0>0$ and .}
\end{equation}
From the expression of \eqref{Cond} and the inequality \eqref{vratio}, for $t\in B_i\cap C_2 \neq \emptyset$ and $x_i = \zeta_2 |t_i -\tau|$, we have that
\begin{equation*}
\mu_\tau(t)\leq b+ \kappa \sqrt{\frac{L_2(x_i\zeta_2^{-1})}{L_2(\zeta_2^{-1})}}\frac{x_i^{{\alpha_2}/2}} b-\varepsilon_0  \frac{L_2(x_i\zeta_2^{-1})}{L_2(\zeta_2^{-1})} \frac{x_i^{\alpha_2}}b
\leq b-\frac{\varepsilon_0}{2} x_i^{\alpha_2}\frac{L_2(x_i\zeta_2^{-1})}{L_2(\zeta_2^{-1})}b^{-1}.
\end{equation*}
Furthermore, Lemma \ref{LemmaMu}(i) implies that
\begin{equation}
Var(f_0(t)) = C_0(t,t)\leq 2\lambda_2 \frac{L_2(x_i\zeta_1^{-1})}{L_2(\zeta_1^{-1})}x_i^{\alpha_2} b^{-2}.
\end{equation}
Thus, the Borel-TIS inequality suggests that
\begin{equation*}
\sup_{|\tau-t^*|\leq\frac{1}{3}y^{-\delta_0}\zeta_2^{-1}}P(\sup_{t\in B_i}f(t)>b|f(\tau)=\gamma+\frac{z}{b})\leq \exp(-x_i^{-\varepsilon_0}),
\end{equation*}
for some small constant $\varepsilon_0$.

Combining the upper bound for the two term on the right side of \eqref{tailboundc2}, and putting together all  $B_i$'s such that $y^{-\delta_0}<x_i<\delta_1$, we have that
\begin{eqnarray*}
  &&\int_T P\Big(\frac{1}{mes(A_\gamma)}>y^{-d}\zeta_2^d, \sup_{t\in C_2} f(t)>b|f(\tau)=\gamma+\frac{z}{b}\Big)h_b(\tau)\frac{q_{b,\tau}(\gamma+{z}/{b})}{b}d\tau dz\\
  &\leq&\exp(-y^{-\varepsilon_0})+\sum_{k=0}^{\infty}\kappa(y^{-\delta_0}+k)^{d-1}\exp(-(y^{-\delta_0}+k)^{\varepsilon_0})\\
  &\leq& \exp(-y^{-\varepsilon_0/2})
\end{eqnarray*}
for some large constant $\kappa>0$ and possible a different choice of $\varepsilon_0$.

\subparagraph{Case 3: $|t-\tau|\geq \delta_1$ and $y<b^{-\lambda}$.} The analysis is completely analogous to the Case 3 on Page \pageref{mu3}. The only difference is that the variance function $\sigma^2(t)$ is non-constant. Given that $\sigma(t)$ is H\"older continuous, all the calculations remain. Therefore, we omit the details and directly reach the bound that 
\begin{equation*}
  \int_T P\Big(\frac{1}{mes(A_\gamma)}>y^{-d}\zeta^d,\sup_{|t-\tau|>\delta_1}f(t)>b|f(\tau)=\gamma+\frac{z}{b}\Big)h_b(\tau)\frac{q_{b,\tau}(\gamma+{z}/{b})}{b}d\tau dz\leq \exp(-y^{-\varepsilon_0})
\end{equation*}
for all $y < b^{-\lambda}$.

\subparagraph{Case 4: $|t-\tau|\geq\delta_1, y\geq b^{-\lambda}$.}\label{case4s1}
We split the bound \eqref{tailboundsum} into two parts.
\begin{eqnarray}
  &&\int_{T}P\Big(\frac{1}{mes(A_\gamma)}>y^{-d}\zeta^d,\sup_{|t-\tau|>\delta_1}f(t)>b|f(\tau)=\gamma+\frac{z}{b}\Big)h_b(\tau)d\tau\notag\\
  &\leq&\sup_{ |\tau-t^*|\leq \delta_1/3}P\Big(\frac{1}{mes(A_\gamma)}>y^{-d}\zeta^d,\sup_{|t-\tau|>\delta_1}f(t)>b|f(\tau)=\gamma+\frac{z}{b}\Big)\notag\\
  && ~~~ +\int_{|\tau-t^*|>\delta_1/3}h_b(\tau)d\tau.
  \label{tailboundc3}
\end{eqnarray}
From Lemma \ref{Lemmah} (iii), the second term on the right side of last inequality can be bound by $\exp(-b^{\varepsilon_0})$ for some $\varepsilon_0>0$. Note that in Case 4, $y>b^{-\lambda}$, so this expression can be further bounded by
\begin{equation*}
   \int_{|\tau-t^*|>\delta_1/3}h_b(\tau)d\tau\leq \exp(-\varepsilon_0b^{2})\leq\exp(-y^{-\varepsilon_0/\lambda}).
\end{equation*}

Now we consider the first term on the right side of \eqref{tailboundc3}.
On the set $|\tau -t^*| < \delta_1/3$ and $|t-\tau|> \delta_1$, there exists some $\varepsilon_0$ such that the conditional mean can be bounded from below by
\begin{equation}\label{mu4}
 \mu_\tau(t)\leq (1-\frac{\varepsilon_0}{2})b.
\end{equation}
This is because from condition A4,  $\sigma(\tau)\geq\sigma(t^*)-\Lambda L(\delta_1/3)(\delta_1/3)^\alpha$, for $|\tau-t^*|\leq1/3\delta_1$; while $\sigma(t)\leq \sigma(t^*)-\Lambda L(2\delta_1/3)(2\delta_1/3)^{\alpha_2}$, for $|t-t^*|\geq 2\delta_1/3$. As a result, there exists a constant $\varepsilon_0>0$ such that $\frac{\sigma(t)}{\sigma(\tau)}\leq 1-\varepsilon_0$. In addition, the correlation function also drops.

For the rest of case 4, we follow the same analysis as that of Case 4 on page \pageref{case4} and derive an upper bound for the first term on the right side of \eqref{tailboundc3}.
\begin{eqnarray}\label{c4}
P\Big(\frac{1}{mes(A_\gamma)}>y^{-d}\zeta^d,\sup_{|t-\tau|\geq\delta_1}f(t)>b|f(\tau)=\gamma+\frac{z}{b}\Big)&\leq &
P\Big(\sup_{|t-\tau|\geq\delta_1}f(t)>b|f(\tau)=\gamma+\frac{z}{b}\Big)\notag\\
&\leq &
P\Big(\sup_{|t-\tau|\geq\delta_1}f_0(t) + \mu_\tau(t)>b\Big)\notag\\
&\leq &
P\Big(\sup_{|t-\tau|\geq\delta_1}f_0(t) >\varepsilon_0 b/2\Big)\notag\\
&\leq& \exp(-\frac{\varepsilon_0^2}{8\sigma_T^2} b^{2})\\
&\leq& \exp(-y^{-\varepsilon_0'}).\notag
\end{eqnarray}
for some $\varepsilon_0,\varepsilon_0'>0$.
Combining our result for the first and second term of \eqref{tailboundc3}, and for $C_i = C_3$ for $y\geq b^{-\lambda }$
\begin{equation*}
  \eqref{tailboundsum}\leq \exp(-y^{-\varepsilon_0}),\qquad \text{for some possibly smaller }\varepsilon_0>0.
\end{equation*}

\paragraph{Summary of the analysis for $I_1$.} Putting all the results in Cases 1-4 together, we have that there exists a $y_0>0$ such that
\begin{equation}\label{mesub1}
Q\Big(\frac{1}{mes(A_\gamma)}>y^{-d}\zeta_2^{-1}, M>b\Big)\leq \exp(-y^{-\varepsilon_0}),
\end{equation}
for $0<y<y_0$.
Thus, for some $\kappa>0$, we have
\begin{equation*}
 I_1 = E^Q\Big(\frac{1}{mes(A_\gamma)^2};M>b\Big)\leq (\kappa+y_0^{-d}){\zeta_2}^{2d}.
\end{equation*}

\paragraph{The $I_2$ term.}
We are going to derive a lower bound for $I_2$ by showing that Lemma \ref{LemmaI21} and Lemma \ref{LemmaI22} are valid.
Following the same calculation for \eqref{tailboundc3}, we reach the result of Lemma \ref{LemmaI21} (on page \pageref{LemmaI21}) that
$$    Q(\sup_{|t-\tau|\geq\delta'}f(t)\geq\gamma) \leq
Q(|t^*-\tau|\geq\delta'/3) + Q(\sup_{|t-\tau|\geq\delta'}f(t)\geq\gamma, |t^*-\tau|< \delta'/3)
$$
The first term on the right-hand-side is controlled by Lemma \ref{Lemmah} (iii). The second term can be bounded by a similar analysis as in \eqref{c4}. Thus, we have that
\begin{equation}\label{L2}
Q(\sup_{|t-\tau|\geq\delta'}f(t)\geq\gamma) \leq e^{-\varepsilon_0 b^2}
\end{equation}
for some $\varepsilon_0$ small.

Now, we proceed to proving a similar result as in Lemma \ref{LemmaI22} (page \pageref{LemmaI22}).
Note that for $x{\zeta_2}^{-1} < \delta'$
\begin{eqnarray*}
Q\Big(\sup_{x{\zeta_2}^{-1}\leq|t-\tau|\leq\delta'} f(t)\geq\gamma \Big)&\leq& Q\Big(\sup_{x{\zeta_2}^{-1}\leq|t-\tau|\leq\delta'} f(t)\geq\gamma , |\tau - t^*| < x \zeta_2 ^{-1}/3\Big)\\
&& ~~~~+Q\Big( |\tau - t^*| > x \zeta_2 ^{-1}/3\Big).
\end{eqnarray*}
Thanks to Lemma \ref{Lemmah}, the second term on the right-hand-side is bounded by $e^{-x^{\varepsilon_0}}$.
For the first term, we follow a similar analysis as in Lemma \ref{LemmaI22}.
In particular, we can establish a bound for the conditional mean $\mu_\tau(t) = E(f(\tau+ t) | \tau,z)$ in the following form
$$\mu_\tau(t) \leq \gamma + \frac z b - \varepsilon_0\frac {x^{\alpha_2}}{b}$$
for all $x\zeta_2^{-1} <|t|< \delta'$ and $|\tau - t^*| < x\zeta_2^{-1}/3$. With this bound, we follow exactly the same analysis as in Lemma \ref{LemmaI22} and obtain that
\begin{equation}\label{L3}
  Q\Big(\sup_{x{\zeta_2}^{-1}\leq|t-\tau|\leq\delta'} f(t)\geq\gamma \Big)\leq e^{-x^{\alpha_2/4}}
\end{equation}
and thus a similar result in Lemma \ref{LemmaI22} has been proved.
With these results, we use the same analysis as that in \eqref{e:39} and obtain that for some $x$ sufficiently large
\begin{eqnarray*}
  I_2&\geq& \varepsilon_0  x^{-d}\zeta_2^d.
\end{eqnarray*}
Combining our upper bound for $I_1$  and  lower bound for  $I_2$, we conclude the proof for scenario 1.
\begin{equation*}
  \sup_{b}\frac{E^QZ_b^2}{P^2(M>b)}=\sup_b\frac{I_1}{I_2^2}<\infty.
\end{equation*}

\subsection{Proof for scenario 2: $\alpha_1<\alpha_2$, or $\alpha_1=\alpha_2$ and $\lim_{x\to 0}\frac{L_1(x)}{L_2(x)}=\infty$}

In scenario 2, we first consider the covariance function $C(s,t)=cov(f(s),f(t))$. It satisfies the following conditions:

\begin{enumerate}
\item[B1] There exists $\beta_0\geq 0$, $\beta_1>0$, such that $\beta_0+\beta_1\geq \alpha_1$, and
\begin{equation*}
  |C(\tau,t+s_1)-C(\tau,t+s_2)|\leq \kappa \max(L_2(|s_1|)|s_1|^{\beta_0},L_2(|s_2|)|s_2|^{\beta_0})|s_1-s_2|^{\beta_1}
\end{equation*}
\item[B2] As $|t-s|\to 0$,
\begin{equation*}
  C(s,s)-C(s,t)\sim \sigma(s)^2 \Delta_s L_1(|s-t|)|s-t|^{\alpha_1}
\end{equation*}
\item[B3] There exists $\varepsilon'', \delta''>0$ such that for $|s-t^*|<\delta'', |t-s|>2\delta''$, we have
\begin{equation*}
  C(s,s)-C(s,t)>\varepsilon''.
\end{equation*}
\end{enumerate}
Therefore, we can basically replicate the analysis in Section \ref{SecCont} for the constant mean by replacing the correlation function $r(s,t)$ with the covariance function $C(s,t)$ and all the derivations are exactly the same except for one place. In the analysis of Case 4 (Page \pageref{case4}), for which we need to provide a bound for
$$Q\Big(mes(A_\gamma)^{-1}>y^{-d}\zeta_1^d, \sup_{|t-\tau|>\delta_1} f(t) >b\Big).$$
For this part, we need to following the analysis of Case 4 for scenario 1 (page \pageref{case4s1}). Other analyses are all the same and therefore are omitted.

\section{Proof of Lemmas}
Throughout the proof, we used several properties of slowly varying function, which are stated in the next Lemma.
\begin{lemma}\label{LemmaSlow}
Suppose $L(x), x>0$ is a positive continuous slowly varying function, then it has the following properties.
\begin{enumerate}
\item[(i)]$\forall\beta>0, \exists \delta_\beta>0, \kappa_s$, s.t. for $\zeta$ satisfying $\zeta^{-1}<\delta_\beta$, $x\leq1$ we have
\begin{equation*}
  \frac{L(x\zeta^{-1})}{L(\zeta^{-1})}x^\beta\leq \kappa_s
\end{equation*}
\item[(ii)] $\forall \beta>0, \exists \delta_\beta>0, \kappa_s>0$, s.t. for $\zeta$ satisfying $\zeta^{-1}x<\delta_\beta, x\geq1$, we have
\begin{equation*}\frac{L(\zeta^{-1}x)x^\beta}{L(\zeta^{-1})}\geq \kappa_s^{-1}\end{equation*}
\end{enumerate}
\end{lemma}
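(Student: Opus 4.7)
The plan is to derive both inequalities from the Potter bounds for slowly varying functions, suitably adapted to slow variation at zero. Recall that the standard Potter bound for a function $\tilde L$ slowly varying at infinity says: for every $\beta' > 0$, there exists $y_0$ such that $\tilde L(v)/\tilde L(u) \leq 2(v/u)^{\beta'}$ for all $y_0 \leq u \leq v$. Applying this to $\tilde L(y) \triangleq L(1/y)$, which is slowly varying at infinity precisely when $L$ is slowly varying at zero in the paper's sense (one checks directly from the definition that $\tilde L(ty)/\tilde L(y) = L((1/y)/t)/L(1/y) \to 1$ as $y \to \infty$ for $t>1$, using slow variation at zero with the ratio $1/t \in (0,1)$), and then changing variables back, yields the following form: for every $\beta' > 0$, there exists $\delta > 0$ such that for all $0 < y \leq x \leq \delta$,
\begin{equation*}
\frac{L(y)}{L(x)} \leq 2\Big(\frac{x}{y}\Big)^{\beta'}.
\end{equation*}
This one-sided Potter inequality is the only nontrivial input.

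For part (i), given $\beta > 0$, choose $\beta' \in (0,\beta)$ and let $\delta_\beta$ be the corresponding threshold. Take $\zeta^{-1} < \delta_\beta$ and $x \leq 1$, and apply the Potter bound with $y = x\zeta^{-1}$ and the role of the larger point played by $\zeta^{-1}$: since $y \leq \zeta^{-1} \leq \delta_\beta$, the inequality gives $L(x\zeta^{-1})/L(\zeta^{-1}) \leq 2 x^{-\beta'}$. Multiplying by $x^{\beta}$ produces $2 x^{\beta-\beta'} \leq 2$ for $x \in (0,1]$, so $\kappa_s = 2$ works.

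For part (ii), choose $\beta' \in (0,\beta)$ and let $\delta_\beta$ be the corresponding Potter threshold. For $x \geq 1$ and $\zeta^{-1} x < \delta_\beta$, apply the Potter bound with $y = \zeta^{-1}$ and the larger point $\zeta^{-1} x$: this gives $L(\zeta^{-1})/L(\zeta^{-1} x) \leq 2 x^{\beta'}$, i.e.\ $L(\zeta^{-1} x)/L(\zeta^{-1}) \geq \tfrac12 x^{-\beta'}$. Multiplying by $x^{\beta}$ yields $\tfrac12 x^{\beta-\beta'} \geq \tfrac12$ for $x \geq 1$, so $\kappa_s = 2$ works here as well.

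There is no real obstacle: once the Potter inequality is invoked the rest is a line of algebra. The only step requiring care is the translation between slow variation at zero (as defined in the paper) and the standard slow variation at infinity under which Potter's theorem is usually stated; that translation is handled by the reciprocal substitution $\tilde L(y) = L(1/y)$ together with the hypothesis that $L$ is positive and continuous, which suffices to extend the one-sided convergence $L(tx)/L(x) \to 1$ for $t \in (0,1)$ to all $t > 0$ and thus validate the use of Karamata's results.
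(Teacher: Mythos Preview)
Your proof is correct and is essentially the same as the paper's: the paper simply cites the relevant results in Bingham--Goldie--Teugels, and you have spelled out the argument via the Potter bounds (translated to slow variation at zero by the substitution $\tilde L(y)=L(1/y)$), which is exactly the standard way those cited theorems yield the two inequalities. Your handling of the domain conditions (both arguments lying below the Potter threshold $\delta_\beta$) and the extension of the one-sided definition of slow variation at zero to all $t>0$ are fine.
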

This lemma is a direct application of Theorem 1.5.3, and Theorem 1.5.4 in  \cite{bingham1989regular}.

\begin{proof}[Proof of Lemma \ref{LemmaI21}]
For $|t-\tau|\geq \delta$, according to condition A3, there exits $\varepsilon>0$, such that $r(t,\tau)<1-\varepsilon$. For $b$ large enough, and $0<z<\frac{\varepsilon}{4}b^2$, we have
\begin{eqnarray*}
\mu_\tau(t)&=&\mu(t+\tau)+\frac{r(t+\tau,\tau)}{r(\tau,\tau)}(\gamma+\frac{z}{b}-\mu(\tau))\\
&\leq&2\mu_T+(1-\varepsilon)(\gamma+\frac{z}{b})\\
&\leq&(1-\varepsilon/2)b
\end{eqnarray*}
and the conditional variance $C_0(t,t)=C(t+\tau,t+\tau)-C(t+\tau,\tau)^2 C(\tau,\tau)^{-1}$ is bounded by $\sigma_T^2$.
Then by the Borel-TIS inequality (Proposition \ref{PropBorel}), we have that
\begin{eqnarray}\label{condbound1}
P(\sup_{|t-\tau|\geq\delta}f(t)\geq\gamma|f(\tau)=\gamma+\frac{z}{b})\leq e^{-\frac{\varepsilon^2}{8\sigma_T^2} b^2}
\end{eqnarray}
Since $z$ is asymptotically exponentially distributed with mean $\sigma(\tau)^2$ and $\tau$ is asymptotically uniformly distributed, we have
\begin{equation*}
  Q(\sup_{|t-\tau|>\delta}f(t)\geq\gamma)\leq \sup_{z<\frac{\varepsilon b^2}{4}}P(\sup_{|t-\tau|\geq\delta}f(t)\geq\gamma|f(\tau)=\gamma+\frac{z}{b})+Q(z>{\varepsilon b^2}/{4})\leq e^{-\varepsilon_0 b^2}.
\end{equation*}
\end{proof}

\bigskip

\begin{proof}[Proof of Lemma \ref{LemmaI22}]
According to conditional Gaussian calculation, we have that $$Q(b\times(f(\tau) - \gamma)\geq   x^{\alpha_1/2})\leq e^{-\varepsilon_0 x^{\alpha_1/2}}.$$ Therefore, we only need to consider that $f(\tau) = \gamma + \frac z b $ for $z< x^{\alpha_1/2}$.
Let $\tilde{T}=\{t_1,...,t_N\}$ such that:
\begin{enumerate}
  \item For $i\neq j$, $i,j\in\{1,...,N\}$, $|t_i-t_j|>\zeta_1^{-1}$
  \item For any $t\in T$, there exists $i\in\{1,...,N\}$, such that $|t-t_i|\leq 2\zeta_1^{-1}$.
\end{enumerate}
  Furthermore, let $B_i=\{t:|t-t_i|\leq 2\zeta_1^{-1}\}$, $i\in\{1,2,...,N\}$.
  First calculate the upper bound for conditional mean and variance.
  For $k/\zeta_1\leq |t_i - \tau | \leq  (k+1)/\zeta_1$, $t\in B_i$, and $z<x^{\alpha_1/2}$ according to condition A2 and A5, we have that
  \begin{eqnarray}
    \mu_\tau(t)&\leq &b + \frac z b +\kappa_\mu\sqrt{L_1(|t|)}|t|^{\alpha_1/2}-\Delta_\tau b L_1(|t|)|t|^{\alpha_1}\nonumber\\
  &\leq&b-\frac{\Delta_\tau}{2}\frac{L_1(k\zeta_1^{-1})}{L_1(\zeta_1^{-1})}k^{\alpha_1} b^{-1}.\label{eqI21}
  \end{eqnarray}
For the conditional variance, by Lemma \ref{LemmaMu}(i),  when $t \in B_i$ and $k$ large enough,  we have
\begin{eqnarray}
  C_0(t,t)&\leq&\lambda_1L_1((k+3)\zeta_1^{-1})(k+3)^{\alpha_1}\zeta_1^{-\alpha_1}\notag\\
  &\leq&2\lambda_1 \frac{L_1(k\zeta_1^{-1})}{L_1(\zeta_1^{-1})}k^{\alpha_1} b^{-2}\label{eqI22}
\end{eqnarray}
According to Lemma \ref{LemmaMu} (iii), $E(\sup_{|t+\tau-t_i|\leq 2\zeta_1^{-1}}f_0(t)) = O(b^{-1})$ as $b\to\infty$. So for $k$ large enough, we have
\begin{equation}\label{eqI23}
E\Big[\sup_{t\in B_i}f_0(t)\Big ]\leq \frac{\Delta_\tau}{4}\frac{L_1(k\zeta_1^{-1})}{L_1(\zeta_1^{-1})}k^{\alpha_1} b^{-1}.
\end{equation}
By Proposition \ref{PropBorel}, \eqref{eqI21}, \eqref{eqI22}, and \eqref{eqI23}, we have
\begin{eqnarray}
P(\sup_{|t-t_i|\leq2\zeta_1^{-1}}f(t)\geq\gamma|f(\tau)=\gamma+\frac{z}{b})
&\leq&\exp(-\frac{\Delta_\tau^2L_1(k\zeta_1^{-1})k^{\alpha_1}}{64L_1(\zeta_1^{-1})\lambda_1})\leq \exp(-\frac{\Delta_\tau^2 k^{\alpha_1/2}}{64\lambda_1}).\label{eqI24}
\end{eqnarray}
The last inequality of the above display is due to Lemma \ref{LemmaSlow}(ii).
Note that
\begin{equation*}
  P(\sup_{x\zeta_1^{-1}<|t-\tau|<\delta}f(t)>\gamma|f(\tau)=\gamma+\frac{z}{b})\leq \sum_{x\zeta_1^{-1}<|t_i-\tau|<\delta'} P(\sup_{t\in B_i}f(t)\geq\gamma|f(\tau)=\gamma+\frac{z}{b}).
\end{equation*}
According to \eqref{eqI24}, we further bound the above probability by
  \begin{eqnarray*}
    \sum_{x\zeta_1^{-1}<|t_i-\tau|<\delta} P(\sup_{t\in B_i}f(t)\geq\gamma|f(\tau)=\gamma+\frac{z}{b})&\leq&O(1)\sum_{k=\lfloor x \rfloor }^{\delta\zeta_1} k^{d-1}\exp(-\frac{\Delta_\tau k^{\alpha_1/2}}{64\lambda_1})\\
    &\leq& e^{-x^{\alpha_1/2-\varepsilon_0}}
  \end{eqnarray*}
for $x$ sufficiently large and $\varepsilon_0$ small.
We integrate the above bound with respect to $(z,\tau)$ under the measure $Q$ and conclude the proof.
\end{proof}

\bigskip

\begin{proof}[Proof of Lemma \ref{Lemmah}]
The proof of this lemma is based on the fact that $P(f(t)>\gamma)$ has the approximation
\begin{equation*}
P(f(t)>\gamma)=\frac{1}{\sqrt{2\pi}}\frac{\sigma(t)}{\gamma-\mu(t)}\exp\left(-\frac{\gamma-\mu(t)}{2\sigma(t)}\right)(1+o(1)),
\end{equation*}
combined with the expansion of $\sigma(t)^2$ around $t^*$,
\begin{equation*}
\sigma(t)^2=\sigma(t^*)^2-2\sigma(t^*)\Lambda L_2(|t-t^*|)|t-t^*|^{\alpha_2}(1+o(1)).
\end{equation*}

After basic calculation of expansion and integration, we can prove that there exist $\varepsilon_0, \kappa>0$, such that for $x>\kappa$, we have
\begin{eqnarray*}
  &\int_{|t-t^*|\leq\zeta_2^{-1}}P(f(t)>\gamma)dt&\geq \frac{1}{\sqrt{2\pi}}\frac{\sigma(t^*)/2}{\gamma+\mu_T}\zeta_2^{-d}\exp\left(-\frac{(\gamma-\mu(t^*))^2}{2\sigma(t^*)^2}\right)\cdot\varepsilon_0\\
  &\int_{x\zeta_2^{-1}<|t-t^*|<\delta}P(f(t)>\gamma)dt&\leq\frac{1}{\sqrt{2\pi}}\frac{\sigma(t^*)}{\gamma-\mu_T}\zeta_2^{-d}\exp\left(-\frac{(\gamma-\mu(t^*))^2}{2\sigma(t^*)^2}\right)\exp\left(-x^{\alpha_2/2}\right)\\
   &\int_{|t-t^*|>\delta}P(f(t)>\gamma)dt&\leq\frac{1}{\sqrt{2\pi}}\frac{\sigma(t^*)}{\gamma-\mu_T}\exp\left(-\frac{(\gamma-\mu(t^*))^2}{2\sigma(t^*)^2}\right)\exp(-\varepsilon_0b^{2})
\end{eqnarray*}
Combining the three inequalities above, and noticing that $h_b(t)=\frac{P(f(t)>\gamma)}{\int_{t\in T}P(f(t)>\gamma)dt}$, we have the result in this lemma.
\end{proof}

\end{document}